\tikzset{dot/.style={circle, fill=black, inner sep=.05cm}}
\newcommand{%
     \resizebox{}{!}{\input{}}}[2]{%
     \resizebox{#1}{!}{\input{#2}}}
\newtheorem{theorem}{Theorem}[section]
\newtheorem{lemma}[theorem]{Lemma}
\newtheorem{obs}[theorem]{Observation}
\theoremstyle{definition}
\newtheorem{definition}[theorem]{Definition}
\newtheorem{claim}[theorem]{Claim}
\newtheorem{remark}[theorem]{Remark}
\newcommand{\Z}{\mathds{Z}}
\newcommand{\bP}{\mathds{P}}
\newcommand{\calM}{\mathcal{M}}
\newcommand{\Mbar}{\overline{\calM}}
\newcommand\trop{{\mathtt{trop}}}
\newcommand{\dg}{\mathrm{deg}}
\DeclareMathOperator{\Aut}{\mathrm Aut}
\DeclareMathOperator{\lcm}{lcm}
\DeclareMathOperator{\val}{val}
\newcommand\Mgn[1]{\mathcal M_{#1}}
\newcommand\Mgnbar[1]{\Mbar_{#1}}
\newcommand\Hur[1]{\mathcal H_{#1}}
\newcommand\Adm[1]{ \mathcal{ \overline{H}}_{#1}}
\newcommand{\Ft}{\mathsf{F_t}}
\newcommand{\Fs}{\mathsf{F_s}}
\newcommand{\ft}{\mathsf{f_t}}
\newcommand{\fs}{\mathsf{f_s}}
\newcommand{\gGamma}{\widehat{\Gamma}}
\newcommand{\gT}{\widehat{T}}
\newcommand{\Tev}{{\mathsf{Tev}}}
\newcommand{\mysetminusD}{\hbox{\tikz{\draw[line width=0.6pt,line cap=round] (3pt,0) -- (0,6pt);}}}
\newcommand{\mysetminusT}{\mysetminusD}
\newcommand{\mysetminusS}{\hbox{\tikz{\draw[line width=0.45pt,line cap=round] (2pt,0) -- (0,4pt);}}}
\newcommand{\mysetminusSS}{\hbox{\tikz{\draw[line width=0.4pt,line cap=round] (1.5pt,0) -- (0,3pt);}}}
\newcommand{\mysetminus}{\mathbin{\mathchoice{\mysetminusD}{\mysetminusT}{\mysetminusS}{\mysetminusSS}}}
\renewcommand\setminus\mysetminus
\renewcommand\smallsetminus\mysetminus
\title{Generalizations of Tropical Tevelev Degrees}
\author{Erin Dawson}
\address{Universit\"at T\"ubingen, Fachbereich Mathematik, Auf der Morgenstelle 10, 72076 T\"ubingen, Germany}
\email{\href{mailto:erin.dawson@math.uni-tuebingen.de}{erin.dawson@math.uni-tuebingen.de}}
\begin{document}

\maketitle
\begin{abstract}
  We study tropical Tevelev degrees arising from maps between certain tropical moduli spaces of curves. Building on work of Dawson and Cavalieri, who defined and computed tropical Tevelev degrees in the case of degree $d = g+1$ and $n = g+3$ marked points, we extend the theory by introducing an additional integer parameter $\ell$. In our framework the curve degree and number of marked points vary as $d = g + 1 + \ell$ and $n = g + 3 + 2\ell$, and we analyze the resulting tropical Tevelev degrees for both positive and negative values of $\ell$. This tropicalizes results of Cela, Pandharipande, and Schmitt on algebraic Tevelev degrees. 
  We then further broaden the framework by introducing generalized tropical Tevelev degrees, providing the tropical counterpart to the generalized Tevelev degrees studied by Cela and Lian. These results establish a wider set of computational and structural patterns for intersection calculations on tropical moduli spaces and reveal new behavior beyond the classical setting.
\end{abstract}
\section{Introduction}

Tevelev degrees arise as intersection-theoretic invariants associated to natural morphisms between moduli spaces of curves. The name was coined by Cela, Pandharipande and Schmitt in \cite{CPS:Tevdeg} when they reformulated the notion previously studied by Tevelev \cite{Tevelev:2020zux}.
These degrees encode the enumerative geometry of covers of the projective line with prescribed ramification and marked points. From a tropical perspective, Cavalieri and Dawson showed that Tevelev degrees admit a fully combinatorial interpretation: they can be realized as degrees of morphisms between tropical moduli spaces of admissible covers and computed by explicit lattice–counting arguments \cite{troptev}. In particular, for covers of degree $d=g+1$ with $n=g+3$ marked points, the tropical Tevelev degree agrees with its algebraic counterpart and is equal to $2^g$.

The purpose of this paper is to extend this tropical framework in two directions. First, following the algebraic generalizations studied in \cite{CPS:Tevdeg}, we allow the degree and number of marked points to vary by introducing an additional integer parameter $\ell$. Concretely, for a fixed genus $g$ we consider covers of degree 
\[d=g+1+\ell\] and with \[n=g+3+2\ell\] marked points. These choices ensure that the relevant morphism between moduli spaces has equal-dimensional source and target, so that a well-defined degree may still be defined. The resulting invariants are called \textit{tropical Tevelev degrees} and are denoted $\Tev_{g,\ell}^\trop$. Allowing $\ell$ to be positive or negative leads to qualitatively different combinatorial behavior: when $\ell>0$ the degree increases while the genus remains unchanged, whereas when $\ell<0$ the degree constraint eliminates entire families of genus configurations that are present in the classical case $\ell=0$.

Second, we introduce \textit{tropical generalized Tevelev degrees}, providing a tropical counterpart to the generalized Tevelev degrees studied algebraically by Cela and Lian in \cite{GenTev}. In this setting, the marked points are equipped with arbitrary ramification profiles $\mu_1,\dots,\mu_k$. The resulting morphism of tropical moduli spaces again admits a notion of local and global degree, and we show that these generalized tropical degrees recover the expected algebraic values.

Our main results give explicit, closed-form expressions for these tropical degrees and explain them combinatorially in terms of lattice paths and grids of admissible covers. In particular, we obtain a complete description of how the classical value $2^g$ is preserved or modified as $\ell$ varies and as ramification profiles are introduced.

\subsection{Statement of results}
Our first result shows that increasing the degree and number of marked points does not change the tropical Tevelev degree.
\begin{theorem}\label{thm:ellpositive}
    For any positive integers $g$ and $\ell$, 
    \[\Tev_{g,\ell}^\trop = 2^g.\]
\end{theorem}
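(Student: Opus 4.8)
The plan is to compute $\Tev_{g,\ell}^\trop$ directly as the degree of the relevant morphism of tropical moduli spaces, following the lattice-counting framework of Cavalieri and Dawson. Concretely, I would fix a general point in the target --- a configuration of branch points on tropical $\bP^1$ together with the prescribed images of the $n = g+3+2\ell$ marked points --- and enumerate the tropical admissible covers in its fiber, each weighted by its local multiplicity. The total degree is the sum of these local multiplicities. The base case $\ell = 0$ is already known to equal $2^g$, arising from $g$ independent binary choices in how the cover wraps around the $g$ loops of the genus-$g$ source curve; the goal is to show that the additional $\ell$ units of degree and $2\ell$ marked points leave this count unchanged.

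First I would pin down which combinatorial types of covers contribute when $\ell > 0$. The source curve still has genus $g$, so its cycle structure --- the $g$ loops responsible for the factor $2^g$ --- persists unchanged. I would then argue that the extra degree and marked points can only be accommodated by attaching additional genus-zero components and sheets in an essentially \emph{rigid} way: the $2\ell$ new marked points are forced onto these extra pieces, and the $\ell$ extra sheets of the cover introduce no new branching over the loops. The crucial bookkeeping is that $d$ and $n$ grow in lockstep, $\delta d = \ell$ and $\delta n = 2\ell$, precisely so that each new sheet is balanced by two new marked-point constraints, leaving no modular freedom.

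Next I would verify that every contributing type factors as a genus-$g$ core together with this rigid extra data, and that the local multiplicity is multiplicative across the factorization. Since the extra data is rigid, its contribution is a product of multiplicity-one local pieces, so the local multiplicity of each full cover equals that of its genus-$g$ core. Summing over types then reproduces exactly the base-case sum, yielding $2^g$. To make this airtight I would either exhibit an explicit multiplicity-preserving bijection between the fibers for parameter $\ell$ and for $\ell = 0$, obtained by forgetting the rigid extra components, or induct on $\ell$, showing that the step $\ell \mapsto \ell + 1$ adds one sheet and two marked points that can be routed in a unique way of multiplicity one.

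I expect the main obstacle to be the rigidity claim itself: proving that the $2\ell$ extra marked points and $\ell$ extra sheets admit a \emph{unique} combinatorial placement of local multiplicity one, with no spurious contributing types and no cycles created among the new components. This requires a careful balancing and Riemann--Hurwitz count on the tropical side, checking that the new pieces are genus zero and unramified where needed, together with an argument that the marked-point constraints eliminate all but one routing. Once rigidity is established, multiplicativity of the local degree and the reduction to the base case are routine.
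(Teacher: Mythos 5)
Your top-level strategy — fix a generic target point, enumerate the fiber of $\Fs\times\Ft$ with multiplicities, and reduce to the known $\ell=0$ count of $2^g$ via a multiplicity-preserving bijection — is the same as the paper's. But the structural claim your plan hinges on is wrong, and the proof would fail at exactly the step you identify as the "rigidity claim." You assert that the $2\ell$ new marked points are forced onto the $\ell$ extra sheets and that, given the genus-$g$ core, the extra data has a unique placement, so that an $\ell$-cover is an $\ell=0$-cover plus rigid decorations that can simply be forgotten. This does not describe the actual fiber. In the contributing covers (see Figure \ref{fig:l=1g=1} for $\ell=1$, $g=1$), the additional marked points are distributed across cut edges and the active edge as well as the joined ends; which marks lie on the extra sheets varies from cover to cover. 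What really happens is that the \emph{entire} marked-tree part is replaced: the admissible fragments are those of the genus $g+2\ell$, $\ell=0$ problem, restricted to the range $1+\ell\leq j\leq n-\ell-2$ (Figure \ref{fig:lpositivefragments}), so the marked-point arrangement of an $\ell$-cover is not "an $\ell=0$ arrangement plus extra pieces." Consequently there is no well-defined forgetful map deleting rigid components; the correct bijection keeps the genus part (the $U/D$ word) and shifts the fragment index by $\ell$, and for a fixed genus part the extra structure is not unique — there are several compatible fragments, the same number as in the base case, which is why the grid total is again $2^g$.

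A second, related gap: even granting a construction of $2^g$ covers, the inequality $\Tev^\trop_{g,\ell}\geq 2^g$ must be complemented by an exclusion argument, and your plan offers no mechanism for it beyond "routine" checks. The paper's exclusion is not a consequence of any forgetful factorization; it rests on a global count of transpositions (Riemann--Hurwitz gives $4g+2\ell$ in total, of which exactly $3g$ must lie in the genus part and $g+2\ell$ in the marked-tree part), which is what rules out the fragments $F_j$ with $j\leq \ell$ or $j>n-\ell-2$, all other loop fragments besides $U$ and $D$, and stray configurations of cuts and joins. Without an argument of this kind, spurious combinatorial types cannot be eliminated, so the proposal as written does not close. (A minor additional inaccuracy: the base-case value $2^g$ is not literally "$g$ independent binary choices around the loops"; the $U/D$ words are constrained by positivity of the active-edge degree, and $2^g$ only emerges after summing compatible fragments over the grid, which is precisely why the fragment bookkeeping cannot be bypassed.)
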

When $\ell$ is negative, the degree constraint forces the disappearance of certain genus configurations. The resulting deficit from $2^g$ can be described explicitly.
\begin{theorem}\label{thm:ellnegative}
    For any negative integer $\ell$ and $g \geq -2\ell$, 
    \[\Tev^\trop_{g,\ell}= 2^g - \sum_{i=0}^{-\ell-1}\bigg(g-2i+1\bigg)\bigg({g\choose i}-{g \choose i-1}\bigg).\]
\end{theorem}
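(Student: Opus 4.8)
The plan is to compute $\Tev^\trop_{g,\ell}$ as a weighted count of tropical admissible covers sorted by a single discrete parameter, and then to determine exactly which families of covers are destroyed once the degree drops below $g+1$. Recall from \cite{troptev} that in the balanced case $\ell = 0$ the tropical degree is realized combinatorially as a sum over lattice paths of length $g$ (equivalently, over distributions of the genus $g$ along the source curve), evaluating to $2^g$. I would first refine this count by sorting the contributing covers according to a parameter $i$ recording the number of descending steps of the associated path, so that a cover with parameter $i$ corresponds to a path ending at height $g - 2i$ and having $g - i$ ascending steps.

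The first step is the purely combinatorial identity
\[2^g = \sum_{i=0}^{\lfloor g/2\rfloor}(g - 2i + 1)\left(\binom{g}{i} - \binom{g}{i-1}\right),\]
which furnishes the refined decomposition of the classical count. Here $\binom{g}{i} - \binom{g}{i-1}$ is the ballot number enumerating, via the reflection principle, the admissible path shapes with $i$ descending steps that never leave the allowed region, while the factor $g - 2i + 1$, equal to one more than the endpoint height, counts the integer positions along the distinguished edge of lattice length $g - 2i$. I would prove this identity directly, e.g. by the reflection principle together with a short telescoping computation, and match each summand to the contribution of the covers with parameter $i$ appearing in the $\ell = 0$ count.

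The geometric heart of the argument is the feasibility analysis for $\ell < 0$. Setting $m = -\ell$, the reduced degree $d = g + 1 - m$ bounds the number of available sheets, and I would show that a configuration with parameter $i$ is realizable as a degree-$d$ tropical admissible cover if and only if its number of ascending steps satisfies $g - i \le d - 1 = g + \ell$, that is, if and only if $i \ge m$. Thus precisely the families with $i = 0, 1, \dots, m-1$ disappear, while every surviving family contributes with unchanged multiplicity $(g-2i+1)(\binom{g}{i} - \binom{g}{i-1})$. The hypothesis $g \ge 2m$ ensures that the removed range $0 \le i \le m-1$ lies strictly below $g/2$, so each eliminated summand is positive and the surviving range is nonempty. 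Subtracting the eliminated terms from the identity gives
\[\Tev^\trop_{g,\ell} = \sum_{i=m}^{\lfloor g/2\rfloor}(g-2i+1)\left(\binom{g}{i} - \binom{g}{i-1}\right) = 2^g - \sum_{i=0}^{m-1}(g-2i+1)\left(\binom{g}{i} - \binom{g}{i-1}\right),\]
which is the claimed formula.

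I expect the main obstacle to be the feasibility criterion, and in particular its forward direction: verifying that every configuration with $i \ge m$ genuinely survives with unchanged weight, while those with $i < m$ are forced to degenerate, demands a careful local analysis of the cover at the vertex carrying the accumulated genus, tracking how the balancing and ramification conditions interact with the reduced sheet count $d = g + 1 - m$. Once this cutoff $i \ge -\ell$ is established, the combinatorial identity and the final subtraction are routine.
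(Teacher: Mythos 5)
Your proposal is correct and takes essentially the same approach as the paper: the paper likewise organizes the $\ell=0$ count $2^g$ into the grid of \cite{troptev}, whose rows with exactly $i$ down-steps number ${g\choose i}-{g\choose i-1}$ and each carry $g-2i+1$ compatible marked fragments, and then observes that for $\ell<0$ the degree $d=g+1+\ell$ forces at least $-\ell$ down-steps (a genus part with $i$ down-steps is connected of degree $g+1-i$, so completing it requires $i+\ell\geq 0$ joined ends), which removes exactly the rows with $i<-\ell$ while leaving every surviving row's contribution unchanged. Your feasibility cutoff $i\geq-\ell$, refined decomposition of $2^g$, and final subtraction coincide with the paper's argument, the only cosmetic difference being that you would prove the identity directly by reflection and telescoping, whereas the paper inherits it from the grid structure.
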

We then extend the theory to arbitrary ramification profiles. Let $\mu_1,\dots,\mu_k$ be vectors of positive integers, with $|\mu_h|$ denoting the sum of the entries of $\mu_h$.
\begin{theorem}\label{thm:higherram}
    Let $g\geq 0$, $\ell$ an integer, and $\mu_1,\dots,\mu_k$ be vectors where $\mu_i \in \Z_{\geq1}^{r_i}$ with $r_i\geq1$,
    \begin{align*}
        \Tev^\trop_{g,\ell,\mu_1,\dots,\mu_k}=&2^g - \sum_{i=0}^{-\ell-1}\bigg(g-2i+1\bigg)\bigg({g\choose i}-{g \choose i-1}\bigg)-\sum_{i=-\ell}^{\lvert\mu_1\rvert-\ell-2}{g\choose i}-\sum_{i=-\ell}^{\lvert\mu_k\rvert-\ell-2}{g\choose i}\\&+\bigg(\lvert\mu_1\rvert+\lvert\mu_k\rvert-2\bigg){g\choose-\ell-1}-\sum_{h=2}^{k-1}\sum_{i=-\ell}^{\lvert\mu_h\rvert-\ell-2}\bigg(\lvert\mu_h\rvert-i-\ell-1\bigg)\bigg({g\choose i}-{g \choose i-1}\bigg)
    \end{align*}

\end{theorem}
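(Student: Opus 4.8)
The plan is to compute $\Tev^\trop_{g,\ell,\mu_1,\dots,\mu_k}$ as a constrained count in the same lattice-path/grid model that underlies Theorems~\ref{thm:ellpositive} and~\ref{thm:ellnegative}, treating the ramification profiles as additional restrictions that delete configurations from the ramification-free count. Concretely, I would take the value
\[2^g - \sum_{i=0}^{-\ell-1}(g-2i+1)\left(\binom{g}{i}-\binom{g}{i-1}\right)\]
from Theorem~\ref{thm:ellnegative} as the baseline---the case in which every marked point carries the trivial profile $\mu_h=(1)$---and then account separately for each profile $\mu_h$ with $\lvert\mu_h\rvert>1$. The guiding principle is additivity: imposing ramification $\mu_h$ at the $h$-th point forces a prescribed number of sheets to collide, and each such collision invalidates a controlled family of the admissible-cover configurations counted in the baseline. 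The theorem is then the baseline minus the total number of deleted configurations, corrected for the overlap between the two boundary profiles.

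First I would classify the surviving configurations by their genus distribution, indexed by an integer $i$, and identify for each profile the range $-\ell \le i \le \lvert\mu_h\rvert-\ell-2$ on which the ramification constraint becomes incompatible with the balancing condition at the $h$-th vertex. For an \emph{interior} profile $\mu_h$ (that is, $2\le h\le k-1$) I expect each such $i$ to contribute a weight $\lvert\mu_h\rvert-i-\ell-1$, counting the number of distinct ways the prescribed collision can break a path of that genus type, multiplied by the net count $\binom{g}{i}-\binom{g}{i-1}$ of configurations of that type---the same ballot-type difference that already appears in Theorem~\ref{thm:ellnegative}. Summing over $2\le h\le k-1$ and over $i$ yields the interior double sum in the statement; note that when $\lvert\mu_h\rvert=1$ the range is empty and the contribution vanishes, as it must.

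Next I would treat the two \emph{boundary} profiles $\mu_1$ and $\mu_k$ separately. Because these sit at the extreme ends of the chain of rational components, the balancing condition there is one-sided, so the ramification constraint removes a full family $\binom{g}{i}$ of configurations rather than the net difference; this produces the two single sums $\sum_{i=-\ell}^{\lvert\mu_1\rvert-\ell-2}\binom{g}{i}$ and $\sum_{i=-\ell}^{\lvert\mu_k\rvert-\ell-2}\binom{g}{i}$. The additional term $+\big(\lvert\mu_1\rvert+\lvert\mu_k\rvert-2\big)\binom{g}{-\ell-1}$, supported on the single extremal layer $i=-\ell-1$ just below the deletion ranges, I read as an inclusion--exclusion correction where the two boundary constraints interact with the top layer of the baseline deletion; the linear coefficient $\lvert\mu_1\rvert+\lvert\mu_k\rvert-2$ records the combined effect of both endpoints on that layer.

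The main obstacle I anticipate is precisely this boundary bookkeeping. Establishing additivity---that the corrections for distinct profiles simply add---should reduce to the observation that each constraint acts locally at a distinct vertex of the source graph, so the deleted families are disjoint away from the extremal layer. The delicate point is that layer $i=-\ell-1$, where the two boundary deletions and the baseline ballot difference all meet: pinning down the exact coefficient $\lvert\mu_1\rvert+\lvert\mu_k\rvert-2$ will require either a careful inclusion--exclusion or a direct re-count of the surviving configurations there, and I expect this to be the crux of the argument. A final consistency check is that setting all $\lvert\mu_h\rvert=1$ collapses every extra term to zero and recovers Theorem~\ref{thm:ellnegative}.
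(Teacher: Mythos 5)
Your high-level accounting matches the shape of the paper's argument: the paper also works in the grid model, starts from the $\ell<0$ baseline, deletes columns for the two boundary profiles $\mu_1,\mu_k$, deletes weighted families for interior profiles, and reads the term $(\lvert\mu_1\rvert+\lvert\mu_k\rvert-2)\binom{g}{-\ell-1}$ exactly as you do, as an add-back for covers subtracted twice (once in the baseline rectangles, once in the boundary columns). However, your proposal has genuine gaps. First, and most seriously, you never address why the degree depends only on the magnitudes $\lvert\mu_h\rvert$ and not on the actual partitions: the theorem allows arbitrary $\mu_h\in\Z_{\ge1}^{r_h}$, and a large part of the paper's proof is a reduction of general profiles to the all-ones case via an explicit local bijection between covers with profile $(\alpha)$ and covers with profile $(\alpha-1,1)$, together with a case analysis (all cuts, all joins, active edge with cuts/joins) showing the bijection exists and is well defined. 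Your ``collision'' language implicitly assumes magnitude-only dependence but supplies no mechanism for it.

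Second, you misidentify the source of the interior double sum. In the paper it does not count configurations that become \emph{invalid}; it counts \emph{identifications}: after the marked fragments are transformed (by combining marks on one tree or moving marks between trees), consecutive fragments $F_j$ and $F_{j+1}$ can produce literally the same cover $\Gamma\to T$, and a dedicated lemma determines exactly when this happens and shows there are $\lvert\mu_h\rvert-i-\ell-1$ coincident copies, of which all but one must be removed. Your picture of constraints ``acting locally at distinct vertices'' and deleting disjoint families would not produce this term, because the covers in question remain perfectly admissible --- they are just no longer distinct. Relatedly, your framing assumes the generalized covers form a subset of the baseline covers, which is false: the target now lives in $\Mgn{0,k}^\trop$ rather than $\Mgn{0,n}^\trop$, and fragments excluded in the baseline case (notably $F_1$, which previously gave infinitely many preimages) become valid contributors once $\lvert\mu_1\rvert>1$. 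Finally, a tropical degree is a multiplicity-weighted count; the paper must verify that every contributing cover has local degree one, which in the general-profile case requires a nontrivial cancellation between local Hurwitz numbers of type $H_0((\alpha),(\alpha),1^\alpha)=1/\alpha$, a lattice-index (dilation) factor, and the new $\lcm$ factor $\prod_e \prod_{\phi(e')=e} m_{e'}/M_e$ in the local degree formula. Your proposal contains no multiplicity analysis at all, so even granting the set count, the degree computation is not established.
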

Finally, our tropical constructions recover the algebraic invariants.
\begin{obs}\label{obs:gencorrespondence}
For any negative integer $\ell$ and $g \geq -2\ell$, we have
\[\Tev_{g,\ell,\mu_1,\dots,\mu_k}=\Tev_{g,\ell,\mu_1,\dots,\mu_k}^\trop.\]
\end{obs}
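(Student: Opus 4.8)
The plan is to prove the equality by recognizing both sides as the degree of a single branch-and-source morphism of moduli spaces of admissible covers, read off algebraically on the one hand and tropically on the other, and then invoking the tropicalization correspondence together with the explicit computation already in hand. Recall that the algebraic invariant $\Tev_{g,\ell,\mu_1,\dots,\mu_k}$ is the degree of the natural morphism out of the space of admissible covers of $\bP^1$ of the prescribed numerical type (genus $g$, degree $d=g+1+\ell$, with $n=g+3+2\ell$ marked points carrying the ramification profiles $\mu_1,\dots,\mu_k$) to the product recording source and branch data, while $\Tev^\trop_{g,\ell,\mu_1,\dots,\mu_k}$ is the degree of the analogous morphism of tropical moduli spaces set up earlier in the paper. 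I would first make precise that these two morphisms are matched under tropicalization: the tropical moduli space is the skeleton of the Berkovich analytification, equivalently the cone complex of the toroidal/logarithmic structure, of the algebraic space, and the branch-and-source morphisms commute with the tropicalization maps.

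The main step is then to transport the degree across tropicalization. By the numerical constraints $d=g+1+\ell$ and $n=g+3+2\ell$ the source and target are equidimensional, so both morphisms are generically finite; the algebraic degree is the number of preimages of a general point counted with multiplicity, and the tropical degree is the weighted count of tropical preimages of a general point, each weighted by the lattice index of the induced map of integral cones. The key assertion is that these counts agree: every contributing tropical cover is realizable, and its tropical multiplicity equals the number of algebraic covers tropicalizing to it. This is exactly the correspondence established for the base case $\ell=0$ in \cite{troptev}, and I would extend it to the present numerical type and to nontrivial profiles $\mu_h$ by repeating the local analysis at the marked ramification points.

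In practice the cleanest route to the stated identity is to combine this correspondence with the explicit values on both sides: Theorem \ref{thm:higherram} supplies the closed form for $\Tev^\trop_{g,\ell,\mu_1,\dots,\mu_k}$, while \cite{GenTev} supplies the algebraic value $\Tev_{g,\ell,\mu_1,\dots,\mu_k}$, so it remains to check that the two expressions coincide on the range $\ell<0$, $g\geq -2\ell$; here the restriction $g\geq -2\ell$ is precisely what guarantees $n=g+3+2\ell\geq 3$, so that the rational target carries enough marked points for the moduli problem to be well posed and no extra boundary corrections intervene. I expect the hardest part to be the bookkeeping of multiplicities at the marked ramification points: introducing the profiles $\mu_h$ both removes certain genus configurations (the negative corrections indexed by $\lvert\mu_h\rvert$) and reweights the surviving ones (the factors $\lvert\mu_h\rvert-i-\ell-1$), and one must verify that these combinatorial weights are exactly the local lattice indices appearing in the tropical degree, so that they match the algebraic contributions term by term. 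I would therefore isolate the hypothesis $g\geq -2\ell$ at the outset and confirm it delimits precisely the range in which both formulas are valid and the local correspondence at each $\mu_h$ holds.
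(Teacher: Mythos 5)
Your core argument---reading both invariants as degrees of the source-times-branch morphism, matching the two morphisms under tropicalization, and extending the correspondence of \cite{troptev} to the new numerical type and to nontrivial profiles by redoing the local analysis---is essentially the paper's own justification: the statement is recorded as an Observation precisely because the correspondence proof of \cite{troptev} carries over unchanged when $\ell$ varies and profiles $\mu_h$ are introduced, which the paper asserts in its section on varying $\ell$. Two points of divergence are worth flagging. First, your ``cleanest route in practice''---comparing the closed form of Theorem \ref{thm:higherram} with the algebraic values of \cite{GenTev}---is a genuinely different argument from the paper's, and as written it is incomplete: you never state the Cela--Lian formula nor perform the term-by-term check, so if that were the chosen route there would be a gap; moreover it is redundant, since once the correspondence of multiplicities is in place the equality of degrees is immediate with no formula comparison needed. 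Second, you gloss over the normalization mismatch: the algebraic invariant is $\deg(\fs\times\ft)/b!$ with labeled simple branch points, whereas the tropical degree has no such denominator. The identity holds only because the tropical Hurwitz space is set up with the legs over simple branch points unmarked (the paper's remark following Definition \ref{def:gentroptev}); any proof of the Observation should make this convention explicit, since otherwise the two sides differ by $b!$ rather than being equal.
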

Together, these results establish a broad generalization of tropical Tevelev theory and provide a uniform combinatorial framework for computing intersection numbers on tropical moduli spaces.

\subsection{Acknowledgments}
We are grateful to Renzo Cavalieri, Alessio Cela, Hannah Markwig, and Felix Röhrle for helpful conversations about the project. This work was supported by the Alexander von Humboldt Foundation.

\section{Background}

\subsection{Algebraic Tevelev degrees}



The computation of Tevelev degrees admits an interpretation in terms of moduli spaces of
Hurwitz covers, as developed in \cite{CPS:Tevdeg}.

We denote by $\Adm{g,d,n}$ the admissible cover compactification \cite{HM:ac} of the Hurwitz
space parametrizing isomorphism classes of degree $d$, genus $g$ covers
$C \to \bP^1$ with simple ramification. All ramification points are marked, and in addition
we mark $n$ unramified points on $C$.

There is a natural morphism
\begin{equation}
    \label{eq:forgfinitemap}
    \fs \times \ft \colon \Adm{g,d,n} \longrightarrow
    \Mgnbar{g,n} \times \Mgnbar{0,n},
\end{equation}
where $\fs$ forgets the ramification points of the source curve, and $\ft$ forgets the branch
points of the target.

For integers $d[g,\ell] = g+1+\ell$ and $n[g,\ell] = g+3+2\ell$, the morphism $\fs \times \ft$ is
finite, and both the source and target have dimension $5g + 4\ell$. In this situation, the
Riemann--Hurwitz formula implies that the ramification (equivalently, branch) locus of any
cover $\varphi \colon C \to \bP^1$ consists of exactly $2g + 2d[g,\ell] - 2$ points.

Following \cite[Section~1.2]{CPS:Tevdeg}, the \emph{Tevelev degree} is defined by
\begin{equation}
    \label{def:algTev}
    \Tev_{g,\ell}
    := \frac{\deg(\fs \times \ft)}{(2g + 2d[g,\ell] - 2)!}.
\end{equation}
The factorial in the denominator accounts for the fact that $\deg(\fs \times \ft)$ counts
covers with labeled simple branch points. Dividing by
$(2g + 2d[g,\ell] - 2)!$ removes this labeling and produces an enumerative invariant.

\subsubsection{Algebraic generalized Tevelev degrees}
Cela and Lian considered a natural generalization of the previous problem by equipping marked points with arbitrary ramification profiles \cite{GenTev}. Let $g, \ell$ and $d$ be as previously defined and fix $k\geq0$ vectors of positive integers \[\mu_h=(e_{h,1},\dots,e_{h,r_h})\in\Z_{\geq1}^{r_h}\]
with $r_h\geq1$ for $h=1,\dots,k$. Define 
\begin{equation}\label{eq:3}
    \lvert\mu_h\rvert=\sum_{j=1}^{r_h}e_{h,j}
\end{equation}
and assume 
\begin{equation}\label{eq:4}
    g+3+2\ell=\sum_{h=1}^k\lvert\mu_h\rvert,
\end{equation}
and \begin{equation}\label{eq:5}
    \lvert\mu_h\rvert\leq d[g,\ell]
\end{equation} for all h.
Define $n=\sum_{h=1}^kr_h$, the authors study the map \begin{equation}
    \label{eq:genforgfinitemap}
    \fs\times\ft: \Adm{g,\ell,\mu_1,\dots,\mu_k} \to \Mgnbar{g,n} \times \Mgnbar{0,k},
\end{equation}
where $\fs\times\ft$ acts as before by forgetting ramification and branch points but remembers marked points and is a finite morphism of $3g-3+n+(k-3)$ dimensional spaces. We observe that in this case the cardinality of the ramification (or equivalently branch) locus of any cover $\varphi:C\to \bP^1$ is $b=2g+2d-2-\sum_{h=1}^k\lvert\mu_h\rvert+n$. In \cite[Section 1.3]{GenTev} the {\it generalized Tevelev degree} is defined as:
\begin{equation}
    \label{def:alggenTev}
    \Tev_{g,\ell,\mu_1,\dots,\mu_k}:= \frac{\deg (\fs\times\ft)}{b!}.
\end{equation}

\subsection{Tropical Tevelev degrees}

In \cite{troptev}, Cavalieri and Dawson developed a tropical framework for Tevelev
degrees by interpreting them as degrees of a morphism between moduli spaces of tropical
admissible covers and tropical curves. One of the main results of \cite{troptev} is a
correspondence theorem identifying tropical Tevelev degrees with their algebraic
counterparts, together with a purely combinatorial computation showing that the
resulting invariant equals $2^g$ in genus $g$. The tropical approach not only recovers
the classical value, but also provides a transparent combinatorial explanation for it.

The present paper builds directly on the tropical framework of \cite{troptev}. Since the
results rely heavily on the notation, conventions, and proof techniques introduced
there, we begin by fixing these conventions and recalling the tropical definition of
Tevelev degrees and the main results. This allows us to state and prove
generalizations without requiring prior familiarity with the earlier paper.

\subsubsection{Notation and conventions}
We summarize notation and conventions introduced in \cite{troptev} that will be used
throughout this paper. This subsection is an overview of notation, some of which will be explained in more detail in the following sections. The purpose of this subsection is to allow for efficient recovery of necessary information from \cite{troptev}. 

\begin{itemize}
    \item We write $\Mgn{g,n}^{\trop}$ for the moduli space of stable $n$-marked tropical curves
    of genus $g$, and $\Mgn{0,n}^{\trop}$ for the corresponding moduli space of rational
    tropical curves. Points of these spaces are metric graphs equipped with marked ends,
    considered up to isomorphism.

    \item A \emph{tropical cover} is a harmonic morphism
    \[
        \varphi \colon \Gamma \to T,
    \]
    where $\Gamma$ is a tropical curve of genus $g$ and $T$ is a genus-zero tropical curve
    (a tree). Each edge of $\Gamma$ is assigned an integer expansion factor, and the
    harmonicity condition ensures local degree conservation at vertices.

    \item Tropical covers appearing in their paper correspond to tropical admissible covers
    with only simple ramification. Points with simple ramification are called \emph{transpositions}. The ends and labels of ramification index $2$ are left off in some figures in this paper to avoid overcrowding.

    \item We denote by $\Fs$ the tropical forgetful morphism that forgets ramification points
    on the source curve, and by $\Ft$ the morphism that forgets branch points on the target.
    The product map
    \[
        \Fs \times \Ft
    \]
    is a morphism between tropical moduli spaces whose degree defines the tropical Tevelev
    invariant.

    \item Degrees of tropical morphisms are computed by summing local contributions over the
    preimages of a generic point, with multiplicities given by lattice indices. Throughout
    the Tevelev setup considered here, all contributing covers have multiplicity one. The proof of this fact is summarized in Section \ref{sec:prooftroptev}.

    \item For a tropical cover $\varphi \colon \Gamma \to T$, there is a unique path in
    $\Gamma$ supporting nontrivial expansion factors. We refer to this path as the
    \emph{active path}, and to its edges as \emph{active edges}. All other edges of $\Gamma$
    have expansion factor one.

    \item We use the notation $U$ and $D$ to denote two local operations for adding genus
    along the active path shown in Figure \ref{fig:2genusoptions}. The operation $U$ increases the expansion factor of the active
    edge, while the operation $D$ decreases it. These operations encode the combinatorial
    choices involved in constructing tropical covers.

    \item Tropical covers are represented combinatorially using lattice or grid
    diagrams, where paths recall sequences of $U$ and $D$ operations. We adopt the convention
    that moving upward corresponds to applying $U$, while moving downward corresponds to
    applying $D$.

    \item When computing degrees, we always assume that the target tropical curve $T$ is
    chosen generically in a maximal cone of $\Mgn{0,n}^{\trop}$, so that all edge lengths
    are distinct and no combinatorial degeneracies occur.

    \item In figures, ends of degree one are sometimes omitted to avoid clutter. Marked ends
    are considered unordered unless stated otherwise.
\end{itemize}

All notation not explicitly defined here follows the conventions of \cite{troptev}.

\subsubsection{Tropical admissible covers and Tevelev degrees}
We briefly recall the tropical objects needed to define tropical Tevelev degrees.
Background on moduli spaces of tropical curves, tropical admissible covers,
and tropical intersection theory may be found in
\cite{MikhalkinModuli,m:survey,Caporaso,CMRadmissible,macstu:tropgeom}.

\begin{definition}
A \emph{tropical admissible cover} of a rational tropical curve with labeled ends
is a morphism of tropical curves
\[
\phi \colon \Gamma \to T
\]
satisfying the following conditions:
\begin{enumerate}
    \item The target $T$ is a stable tree with labeled ends.

    \item Parameterizing edges by arclength (with $\phi(0_e)=0_{\phi(e)}$), the
    restriction of $\phi$ to an edge $e$ is linear,
    \[
        \phi|_e \colon [0,l_e] \to [0,l_{\phi(e)}],
    \]
    with slope
    \(
        m_e = l_{\phi(e)}/l_e
    \)
    a positive integer. The integer $m_e$ is called the \emph{expansion factor}
    (or \emph{degree}) of the edge $e$.

    \item The morphism $\phi$ is \emph{harmonic}: for any vertex $v \in \Gamma$
    and any two edges $e_1,e_2$ incident to $\phi(v)$ in $T$, we have
    \begin{equation}
        \label{eq:localdegree}
        \sum_{\substack{e \ni v \\ \phi(e)=e_1}} m_e
        =
        \sum_{\substack{e \ni v \\ \phi(e)=e_2}} m_e .
    \end{equation}
    This common value is called the \emph{local degree} of $\phi$ at $v$.

    \item The \emph{local Riemann--Hurwitz condition} holds at every vertex $v$ of
    $\Gamma$, namely
    \[
        \val(v) + 2g_v - 2
        =
        d_v \bigl( \val(\phi(v)) - 2 \bigr),
    \]
    where $\val$ denotes valence, $g_v$ is the genus of $v$, and $d_v$ is the local
    degree of $\phi$ at $v$.
\end{enumerate}
\end{definition}

Cavalieri and Dawson define tropical Tevelev degrees by tropicalizing the
algebraic construction of \cite{CPS:Tevdeg}. The tropical forgetful morphism
\[
\Fs \times \Ft
\]
is a map of weighted cone complexes with integral structures, for which a notion
of local degree above a general point of the target is defined. The map is
well-defined by the correspondence theorem recalled below.

For any non-negative integer $g$, consider the Hurwitz data
\begin{equation}
    \label{eq:hd}
    \mathfrak{h}(g)
    =
    \bigl(g,\, d=g+1,\, N=5g+3,\, \eta_1,\ldots,\eta_{5g+3}\bigr),
\end{equation}
where
\[
\eta_i =
\begin{cases}
    (1,\ldots,1) & \text{for } i \le g+3, \\
    (2,1,\ldots,1) & \text{for } g+4 \le i \le 5g+3.
\end{cases}
\]

\begin{definition}
\label{admtrop}
For any non-negative integer $g$, let $n=g+3$ and $d=g+1$. The authors define
$\Hur{g,d,n}^\trop$ to be the moduli space of tropical admissible covers
$\Hur{\mathfrak{h}(g)}^\trop$ with the following conventions:
\begin{itemize}
    \item for $i \le g+3$, exactly one end of $\Gamma$ mapping to the $i$-th end
    of $T$ is marked;
    \item for $i > g+3$, the ends of $T$ and their inverse images in $\Gamma$
    are unmarked.
\end{itemize}
\end{definition}

For $g \ge 0$, consider the morphism of tropical moduli spaces
\[
\Fs \times \Ft \colon
\Hur{g,d,n}^\trop
\longrightarrow
\Mgn{g,n}^\trop \times \Mgn{0,n}^\trop .
\]
In \cite[Section~3]{troptev}, the \emph{tropical Tevelev degree} is defined as
\[
\Tev_g^\trop := \deg(\Fs \times \Ft),
\]
where the local degree at a point $x$ is given by
\begin{equation}
\label{eq:locdegforlater}
\deg_x(\Fs \times \Ft)
=
\frac{|\Aut(\overline{\Gamma})|}{|\Aut(\phi)|}
\cdot
\prod_{v \in V(\Gamma)} H_v
\cdot
\left| \det(M_{\sigma_\Theta}) \right|.
\end{equation}
Here $M_{\sigma_\Theta}$ is the matrix expressing the lengths of the compact
edges of $\Fs(\Gamma)$ and $\Ft(T)$ as linear functions of the edge lengths of
$\Gamma$ and $H_v$ denotes the {\it local Hurwitz number} associated to the vertex $v$ (\cite[Section 3.2.4]{CMRadmissible}).

The authors show that tropical Tevelev degrees agree with their algebraic
counterparts and compute them explicitly, yielding the following results.

\begin{theorem}[{\cite[Theorem~1.1]{troptev}}]
\label{thm:correspondence}
For any $g \ge 0$, we have
\[
\Tev_g = \Tev_g^\trop .
\]
\end{theorem}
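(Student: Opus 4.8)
The plan is to exhibit both quantities as the degree of one and the same finite morphism, computed algebraically on the left and tropically on the right, and to invoke the compatibility of the degree of a finite toroidal morphism with its tropicalization. In outline: I would show that $\Fs \times \Ft$ is the tropicalization of $\fs \times \ft$, that tropicalization preserves the degree once local contributions are weighted as in \eqref{eq:locdegforlater}, and that the factorial normalizations on the two sides agree.

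First I would assemble the geometric inputs. The spaces $\Adm{g,d,n}$, $\Mgnbar{g,n}$ and $\Mgnbar{0,n}$ are toroidal, and their boundary complexes are canonically identified with the tropical moduli spaces $\Hur{g,d,n}^\trop$, $\Mgn{g,n}^\trop$ and $\Mgn{0,n}^\trop$; for the space of admissible covers this identification, together with the local Hurwitz numbers attached to its boundary strata, is supplied by \cite{CMRadmissible}. Under these identifications the forgetful map $\fs \times \ft$ is toroidal, hence induces a map of cone complexes, and I would verify that this induced map is exactly $\Fs \times \Ft$, so that the tropicalization square commutes. Since $d = g+1$ and $n = g+3$ are chosen so that source and target are equidimensional (both of dimension $5g$) and $\fs \times \ft$ is finite, $\deg(\fs \times \ft)$ can be read off from any general closed point of the target.

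Next I would specialize to a general point $y$ of the target whose tropicalization lies in the interior of a maximal cone, so that all edge lengths are distinct and no combinatorial degeneracy occurs. Every point of the algebraic fibre $(\fs \times \ft)^{-1}(y)$ tropicalizes to a tropical admissible cover over $\trop(y)$, and grouping the fibre by the combinatorial type $\phi$ of this cover breaks $\deg(\fs \times \ft)$ into contributions indexed by the finitely many tropical covers lying over $\trop(y)$. The crux is the local identity that the weighted number of algebraic covers specializing to a fixed $\phi$ equals the local tropical degree of \eqref{eq:locdegforlater}: the lattice index $|\det(M_{\sigma_\Theta})|$ is the index of the induced map on integral edge-length lattices, the factors $H_v$ count the ways to smooth each node (the local Hurwitz numbers of \cite{CMRadmissible}), and the ratio $|\Aut(\overline{\Gamma})|/|\Aut(\phi)|$ absorbs the stacky structure. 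Summing over all $\phi$ produces $\deg(\Fs \times \Ft)$, and reconciling the labelings of the $4g$ simple branch points --- ordered on $\Adm{g,d,n}$ but forgotten by $\ft$ and $\Ft$ --- matches this against $\deg(\fs \times \ft)/(4g)!$, giving $\Tev_g^\trop = \Tev_g$.

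The main obstacle is exactly this last local identity. It is not formal: it requires a deformation-theoretic analysis of $\fs \times \ft$ along the toroidal boundary showing that the formal local degree factors as (lattice index) $\times$ (product of local Hurwitz numbers), which is what promotes \eqref{eq:locdegforlater} from a plausible weighting to the correct algebraic multiplicity. This is where the log-smooth deformation theory of admissible covers does the work, and it is morally a realization/lifting statement: each tropical cover over $\trop(y)$ lifts to precisely the predicted number of algebraic covers over $y$. Once this is secured, the observation recorded in \cite{troptev} that every contributing cover carries multiplicity one collapses the computation to a finite count of admissible covers over $\trop(y)$, reducing the correspondence to the purely combinatorial lattice-path enumeration that yields the value $2^g$.
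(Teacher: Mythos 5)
The paper you are reading never proves this statement: Theorem~\ref{thm:correspondence} is imported verbatim from \cite{troptev} (Theorem~1.1 there), and the present paper only recalls it and builds on it, so there is no in-paper proof to compare against; the section on proof techniques (Section~\ref{sec:prooftroptev}) summarizes the companion computation $\Tev_g^\trop = 2^g$, not the correspondence. On its own terms, your outline reconstructs the route the cited source takes: identify the tropical moduli spaces with the cone complexes attached to the toroidal boundaries of $\Adm{g,d,n}$ and $\Mgnbar{g,n}\times\Mgnbar{0,n}$ (via \cite{CMRadmissible} for the admissible-cover side), check that $\fs\times\ft$ tropicalizes to $\Fs\times\Ft$, and establish the local identity matching the weighted count of \eqref{eq:locdegforlater} with the algebraic local degree, with the lattice index $|\det(M_{\sigma_\Theta})|$, the local Hurwitz numbers $H_v$, and the automorphism ratio playing exactly the roles you assign them. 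Your handling of the normalization is also right: the tropical Hurwitz space leaves the $4g$ legs of simple branching unmarked (cf.\ the remark after Definition~\ref{def:gentroptev}), which is precisely what absorbs the $(2g+2d-2)!=(4g)!$ in \eqref{def:algTev}. You correctly flag that the local lifting statement is the genuine mathematical content rather than a formality. One caveat: your closing sentence conflates the two theorems of \cite{troptev} --- the correspondence $\Tev_g=\Tev_g^\trop$ neither needs nor uses the multiplicity-one observation or the lattice-path enumeration; those belong to the separate computation $\Tev_g^\trop=2^g$ (Theorem~\ref{thm:ttev}), so the correspondence should be stated as complete once the local degree identity is proved, independently of the value $2^g$.
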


\begin{theorem}[{\cite[Theorem~1.2]{troptev}}]
\label{thm:ttev}
For any $g \ge 0$, we have
\[
\Tev_g^\trop = 2^g .
\]
\end{theorem}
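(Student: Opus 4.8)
The plan is to reduce the degree to a counting problem and then match the covers in a generic fiber with binary strings of length $g$. Because the paper records that every contributing cover enters the local degree formula \eqref{eq:locdegforlater} with multiplicity one, I would first choose a generic point $x$ of the target whose $\Mgn{0,n}^\trop$-component lies in a maximal cone, so that all edge lengths are distinct and no combinatorial degeneracies occur. Then
\[
\Tev_g^\trop=\deg(\Fs\times\Ft)=\bigl|(\Fs\times\Ft)^{-1}(x)\bigr|,
\]
the number of tropical admissible covers $\phi\colon\Gamma\to T$ lying over $x$, and the theorem becomes the combinatorial assertion that this number is $2^g$.

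Next I would pin down the combinatorial type of a cover in the fiber. Away from the active path every expansion factor equals one, so that portion of $\Gamma$ is rigidly determined by the stabilized source and the marking/unramifiedness conventions of Definition~\ref{admtrop}; all of the freedom is concentrated on the active path. Distributing the simple branch points of the chosen tree along this path and imposing harmonicity \eqref{eq:localdegree} together with the local Riemann--Hurwitz condition at each vertex, I would argue that the genus of $\Gamma$ is built up one unit at a time, that exactly $g$ distinguished vertices each contribute a single unit, and that at each such vertex the local configuration is exactly one of the two operations $U$ and $D$ of Figure~\ref{fig:2genusoptions}.

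The core of the argument is then a bijection
\[
(\Fs\times\Ft)^{-1}(x)\;\xrightarrow{\ \sim\ }\;\{U,D\}^{g}
\]
sending a cover to the sequence of its genus-adding operations read along the active path. I would verify three points: that every string in $\{U,D\}^g$ is realizable, i.e. both $U$ and $D$ are available at each genus step independently of the earlier choices; that distinct strings give non-isomorphic combinatorial types, hence distinct covers; and that for each string the linear system $M_{\sigma_\Theta}$ relating the edge lengths of $\Fs(\Gamma)$ and $\Ft(T)$ to those of $\Gamma$ has a unique solution over $x$, contributing (by the stated multiplicity-one fact and $\lvert\det(M_{\sigma_\Theta})\rvert=1$) exactly one point of the fiber. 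Counting then yields $\Tev_g^\trop=\lvert\{U,D\}^g\rvert=2^g$.

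I expect the principal obstacle to be realizability: showing that the two genus-adding moves are genuinely free, independent binary choices at every step. In particular one must rule out the possibility that operation $D$ becomes obstructed once the expansion factor is already small, which would replace $2^g$ by a Catalan-type count; the point to establish is that $U$ and $D$ each add precisely one unit of genus while keeping every expansion factor a positive integer and every vertex stable, so that the $g$ choices decouple completely. A secondary subtlety is injectivity of the encoding, which should follow from the distinctness of the edge lengths of the generic tree, as this breaks any would-be symmetry exchanging configurations along the active path.
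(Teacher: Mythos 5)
Your proposal has a genuine gap, and it sits precisely at the point you flag as the ``principal obstacle.'' The two genus-adding moves are \emph{not} free, independent choices: $D$ decreases the expansion factor of the active edge and is only available when that factor exceeds $2$, so the associated lattice path may never drop below height $1$. Consequently the genus parts by themselves are counted by a restricted (ballot-type) path count, not by a power of two, and there is no bijection between the fiber and $\{U,D\}^g$. Moreover, a cover in the fiber is not determined by its genus-adding operations alone: your claim that everything away from the active path is rigid overlooks the choice of how the $n=g+3$ marked ends are organized into a marked-tree fragment attached to the active path. The paper's proof (summarized in Section~\ref{sec:prooftroptev}) counts \emph{pairs} consisting of a genus part and a marked fragment $F_j$: the genus part is a word of length $g-1$ in $U,D$ (the first unit of genus is a forced degree-$2$ loop) constrained to keep the active degree positive, there are $n-2$ fragments $F_j$ (Figure~\ref{fig:markfrag}), and a genus part using $i$ copies of $D$ is compatible with $F_j$ exactly when $i \le j \le n-2-i$. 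The value $2^g$ then emerges from the grid of Figure~\ref{fig:g=3grid} via
\[
\sum_{j} A_{d,\ge d-j-1} \;=\; \sum_{m=0}^{d-1}\binom{d-1}{m} \;=\; 2^{d-1} \;=\; 2^{g},
\]
where $A_{d,\ge d-2i}=\binom{g}{i}$ counts the restricted lattice paths. In other words, the deficit created by the obstruction on $D$ is exactly compensated by the varying number of admissible marked fragments; the power of two is a binomial identity, not a product of $g$ independent binary choices, so an argument organized around realizability of all strings in $\{U,D\}^g$ cannot be repaired without introducing the fragment data.

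A secondary error: multiplicity one does not come from $\lvert\det(M_{\sigma_\Theta})\rvert=1$. In the paper the genus block of $M_{\sigma_\Theta}$ has determinant $2^g$, and this cancels against the automorphism factor $\lvert\Aut(\overline{\Gamma})\rvert/\lvert\Aut(\phi)\rvert = 1/2^{g}$, with all local Hurwitz numbers equal to $1$. So the multiplicity-one statement you invoke is true, but for a different reason than the one your argument supplies, and the cancellation mechanism matters once one tries to verify it cover by cover.
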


\subsubsection{Constructing the Covers}
\label{sec:constructing-covers}

We briefly summarize the construction used in \cite{troptev} to build tropical covers
and place marked points. Since the same procedure will be used throughout the present
paper, we recall only its essential features and refer to \cite{troptev} for full
details.

Fix a generic pair $(\overline{\Gamma},\overline{T})$ in a maximal cone of
$\Mgn{g,n}^\trop \times \Mgn{0,n}^\trop$. For any tropical cover
$\phi \colon \Gamma \to T$ contributing to the tropical Tevelev degree, the combinatorial
structure of the cover is rigidified by the existence of a unique \emph{active path} in
$\Gamma$ supporting nontrivial expansion factors. All other edges of $\Gamma$ map with
expansion factor one. 

Marked points are placed inductively along the active path, starting from one end of
the target curve and proceeding across its edges. At each step, the harmonicity and
local Riemann--Hurwitz conditions determine the allowable positions of marked points in
the source curve. For the choice of $(\overline{\Gamma},\overline{T})$, this procedure produces a finite
set of tropical covers, each contributing with multiplicity one. The total number of
such covers is $2^g$, yielding the value of the tropical Tevelev degree.

\subsubsection{Proof techniques in \emph{Tropical Tevelev Degrees}}\label{sec:prooftroptev}

The proofs in this paper use the same techniques and build off of the proof of Theorem 1.2 in \cite{troptev}. We therefore recall the main techniques and sketch their proof in this section. 

The authors' proof that the tropical Tevelev degree equals $2^{g}$ gives a fully combinatorial computation of the degree of the tropical morphism
\begin{equation}
    \Fs\times \Ft: \Hur{g,d,n}^\trop \to \Mgn{g,n}^\trop \times \Mgn{0,n}^\trop
\end{equation}
when $d = g+1$ and $n = g+3$. After refining the cone complexes so that the map is strict, they select a target point $p = (\overline{\Gamma}, \overline{T})$ in the interior of a top dimensional cone, shown in Figure \ref{fig:genim}. Here, $\overline{\Gamma}$ is a chain of $g$ loops followed by a caterpillar tree, and $\overline{T}$ is a trivalent caterpillar tree, both with generic edge lengths. 

\begin{figure}[tb]
    \centering
     \resizebox{\textwidth}{!}{\tikzset{every picture/.style={line width=0.75pt}} 

\begin{tikzpicture}[x=0.75pt,y=0.75pt,yscale=-1,xscale=1]

\draw   (26,66) .. controls (26,57.72) and (36.52,51) .. (49.5,51) .. controls (62.48,51) and (73,57.72) .. (73,66) .. controls (73,74.28) and (62.48,81) .. (49.5,81) .. controls (36.52,81) and (26,74.28) .. (26,66) -- cycle ;
\draw    (73,66) -- (108,66) ;
\draw  [draw opacity=0] (108,66) .. controls (108.94,61.43) and (117.27,57.85) .. (127.41,57.85) .. controls (137.48,57.84) and (145.78,61.36) .. (146.81,65.89) -- (127.41,66.85) -- cycle ; \draw   (108,66) .. controls (108.94,61.43) and (117.27,57.85) .. (127.41,57.85) .. controls (137.48,57.84) and (145.78,61.36) .. (146.81,65.89) ;  
\draw  [draw opacity=0] (146.81,65.89) .. controls (146.36,84.42) and (138.25,99.16) .. (128.12,99.28) .. controls (117.54,99.42) and (108.77,83.59) .. (108.53,63.94) .. controls (108.53,63.65) and (108.52,63.37) .. (108.52,63.09) -- (127.67,63.7) -- cycle ; \draw   (146.81,65.89) .. controls (146.36,84.42) and (138.25,99.16) .. (128.12,99.28) .. controls (117.54,99.42) and (108.77,83.59) .. (108.53,63.94) .. controls (108.53,63.65) and (108.52,63.37) .. (108.52,63.09) ;  
\draw    (146.81,65.89) -- (178,66) ;
\draw    (224,66) -- (289,66) ;
\draw [color={rgb, 255:red, 208; green, 2; blue, 27 }  ,draw opacity=1 ]   (256.67,66.67) -- (256.67,45.67) ;
\draw [color={rgb, 255:red, 208; green, 2; blue, 27 }  ,draw opacity=1 ]   (289,66) -- (302,46) ;
\draw [color={rgb, 255:red, 208; green, 2; blue, 27 }  ,draw opacity=1 ]   (289,66) -- (302,85) ;
\draw    (369,67) -- (466,67) ;
\draw [color={rgb, 255:red, 208; green, 2; blue, 27 }  ,draw opacity=1 ]   (369,67) -- (351,48) ;
\draw [color={rgb, 255:red, 208; green, 2; blue, 27 }  ,draw opacity=1 ]   (369,67) -- (350,87) ;
\draw [color={rgb, 255:red, 208; green, 2; blue, 27 }  ,draw opacity=1 ]   (417.5,67) -- (417.5,44) ;
\draw    (527,67) -- (612,67) ;
\draw [color={rgb, 255:red, 208; green, 2; blue, 27 }  ,draw opacity=1 ]   (569.5,67) -- (569,44) ;
\draw [color={rgb, 255:red, 208; green, 2; blue, 27 }  ,draw opacity=1 ]   (612,67) -- (630,47) ;
\draw [color={rgb, 255:red, 208; green, 2; blue, 27 }  ,draw opacity=1 ]   (612,67) -- (629,87) ;

\draw (41.67,34.73) node [anchor=north west][inner sep=0.75pt]  [font=\small]  {$x_{1}$};
\draw (85,47.4) node [anchor=north west][inner sep=0.75pt]  [font=\small]  {$x_{2}$};
\draw (121,40.73) node [anchor=north west][inner sep=0.75pt]  [font=\small]  {$x_{3}$};
\draw (121,101.07) node [anchor=north west][inner sep=0.75pt]  [font=\small]  {$x_{4}$};
\draw (304.09,88.34) node [anchor=north west][inner sep=0.75pt]  [font=\footnotesize,color={rgb, 255:red, 208; green, 2; blue, 27 }  ,opacity=1 ,rotate=-358.45]  {$ \begin{array}{l}
1\\
\end{array}$};
\draw (305.63,30.16) node [anchor=north west][inner sep=0.75pt]  [font=\footnotesize,color={rgb, 255:red, 208; green, 2; blue, 27 }  ,opacity=1 ,rotate=-359.63]  {$ \begin{array}{l}
2\\
\end{array}$};
\draw (251.68,27.69) node [anchor=north west][inner sep=0.75pt]  [font=\footnotesize,color={rgb, 255:red, 208; green, 2; blue, 27 }  ,opacity=1 ,rotate=-0.17]  {$3$};
\draw (631,90.4) node [anchor=north west][inner sep=0.75pt]  [font=\footnotesize,color={rgb, 255:red, 208; green, 2; blue, 27 }  ,opacity=1 ]  {$1$};
\draw (633.33,29.73) node [anchor=north west][inner sep=0.75pt]  [font=\footnotesize,color={rgb, 255:red, 208; green, 2; blue, 27 }  ,opacity=1 ]  {$2$};
\draw (563,28.73) node [anchor=north west][inner sep=0.75pt]  [font=\footnotesize,color={rgb, 255:red, 208; green, 2; blue, 27 }  ,opacity=1 ]  {$3$};
\draw (337.28,86.79) node [anchor=north west][inner sep=0.75pt]  [font=\footnotesize,color={rgb, 255:red, 208; green, 2; blue, 27 }  ,opacity=1 ,rotate=-359.4]  {$n$};
\draw (335.08,29.85) node [anchor=north west][inner sep=0.75pt]  [font=\footnotesize,color={rgb, 255:red, 208; green, 2; blue, 27 }  ,opacity=1 ,rotate=-0.85]  {$n-1$};
\draw (402.67,30.05) node [anchor=north west][inner sep=0.75pt]  [font=\footnotesize,color={rgb, 255:red, 208; green, 2; blue, 27 }  ,opacity=1 ,rotate=-0.08]  {$n-2$};
\draw (384.33,50.73) node [anchor=north west][inner sep=0.75pt]  [font=\footnotesize]  {$L_{1}$};
\draw (431,51.4) node [anchor=north west][inner sep=0.75pt]  [font=\footnotesize]  {$L_{2}$};
\draw (577,50.4) node [anchor=north west][inner sep=0.75pt]  [font=\footnotesize]  {$L_{g+\ell }$};
\draw (185,58.4) node [anchor=north west][inner sep=0.75pt]  [font=\huge]  {$\dotsc $};
\draw (480,59.4) node [anchor=north west][inner sep=0.75pt]  [font=\huge]  {$\dotsc $};
\draw (258.67,49.07) node [anchor=north west][inner sep=0.75pt]  [font=\footnotesize]  {$x_{4g+\ell }$};
\draw (179,13.4) node [anchor=north west][inner sep=0.75pt]    {$\overline{\Gamma }$};
\draw (486,13.4) node [anchor=north west][inner sep=0.75pt]    {$\overline{T}$};

\end{tikzpicture}}
    \caption{The graphs $\overline{\Gamma}, \overline{T}$ defining  the chosen point $p$ of $\Mgn{g,n}^\trop\times \Mgn{0,n}^\trop$. This figure is reproduced from \cite{troptev}.}
    \label{fig:genim}
\end{figure}

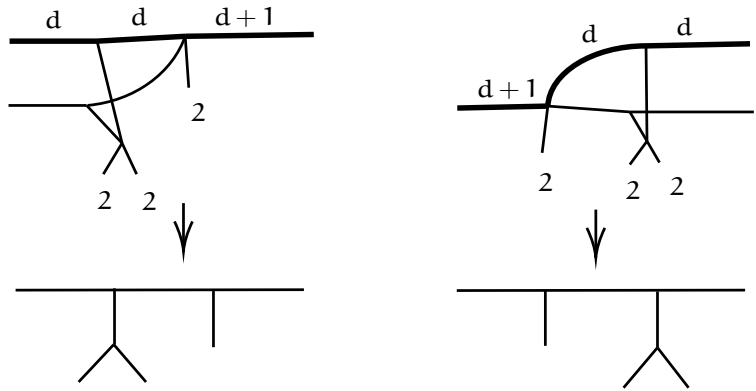
\begin{figure}[tb]
    \centering
     \resizebox{.7\textwidth}{!}{\tikzset{every picture/.style={line width=0.75pt}} 

\begin{tikzpicture}[x=0.75pt,y=0.75pt,yscale=-1,xscale=1]

\draw    (26.81,130.05) -- (132.34,129.89) ;
\draw    (188.28,130.2) -- (293.82,130.04) ;
\draw    (63.02,129.4) -- (63.02,149.97) ;
\draw    (62.75,149.97) -- (49.88,163.69) ;
\draw    (62.75,149.97) -- (74.55,163.69) ;
\draw    (99.23,130.38) -- (99.23,150.95) ;
\draw    (261.74,130.53) -- (261.74,151.1) ;
\draw    (261.74,151.1) -- (249.33,165.8) ;
\draw    (261.74,151.1) -- (273.12,165.8) ;
\draw    (220.7,129.88) -- (220.7,150.45) ;
\draw [line width=1.5]    (24.6,38.85) -- (56.63,38.85) ;
\draw [line width=0.75]    (24.22,62.4) -- (52.82,62.4) ;
\draw [line width=0.75]    (56.63,38.85) -- (65.78,76.21) ;
\draw [line width=0.75]    (52.82,62.4) -- (65.78,76.21) ;
\draw [line width=1.5]    (88.89,37.07) -- (135.95,36.41) ;
\draw  [draw opacity=0][line width=0.75]  (89.08,36.53) .. controls (84.44,50.48) and (70.34,60.81) .. (53.2,62.38) -- (48,26.02) -- cycle ; \draw  [line width=0.75]  (89.08,36.53) .. controls (84.44,50.48) and (70.34,60.81) .. (53.2,62.38) ;  
\draw [line width=1.5]    (56.63,38.85) -- (88.89,37.07) ;
\draw [line width=0.75]    (65.78,76.21) -- (58.92,87.58) ;
\draw [line width=0.75]    (65.78,76.21) -- (71.12,87.58) ;
\draw [line width=0.75]    (88.89,37.07) -- (90.19,55.9) ;
\draw [line width=0.75]    (88.26,98.18) -- (88.26,113.82) ;
\draw [shift={(88.26,115.82)}, rotate = 270] [color={rgb, 255:red, 0; green, 0; blue, 0 }  ][line width=0.75]    (10.93,-3.29) .. controls (6.95,-1.4) and (3.31,-0.3) .. (0,0) .. controls (3.31,0.3) and (6.95,1.4) .. (10.93,3.29)   ;
\draw [line width=1.5]    (188.46,63.35) -- (221.61,62.64) ;
\draw [line width=0.75]    (221.61,62.64) -- (219.52,79.76) ;
\draw  [draw opacity=0][line width=1.5]  (221.61,62.65) .. controls (222.2,50.4) and (238.07,40.58) .. (257.55,40.58) .. controls (257.55,40.58) and (257.55,40.58) .. (257.55,40.58) -- (257.55,63.36) -- cycle ; \draw  [line width=1.5]  (221.61,62.65) .. controls (222.2,50.4) and (238.07,40.58) .. (257.55,40.58) .. controls (257.55,40.58) and (257.55,40.58) .. (257.55,40.58) ;  
\draw [line width=0.75]    (221.61,62.64) -- (251.63,64.78) ;
\draw [line width=0.75]    (257.55,40.58) -- (257.91,75.48) ;
\draw [line width=0.75]    (251.63,64.78) -- (257.91,75.48) ;
\draw [line width=0.75]    (251.63,64.78) -- (297,64.78) ;
\draw [line width=1.5]    (257.55,40.58) -- (295.6,39.81) ;
\draw [line width=0.75]    (257.91,75.48) -- (251.63,84.04) ;
\draw [line width=0.75]    (257.91,75.48) -- (262.49,83.32) ;
\draw [line width=0.75]    (239.18,100.55) -- (239.18,116.18) ;
\draw [shift={(239.18,118.18)}, rotate = 270] [color={rgb, 255:red, 0; green, 0; blue, 0 }  ][line width=0.75]    (10.93,-3.29) .. controls (6.95,-1.4) and (3.31,-0.3) .. (0,0) .. controls (3.31,0.3) and (6.95,1.4) .. (10.93,3.29)   ;

\draw (35.52,25.92) node [anchor=north west][inner sep=0.75pt]  [font=\tiny,color={rgb, 255:red, 0; green, 0; blue, 0 }  ,opacity=1 ]  {$d$};
\draw (97.48,24.85) node [anchor=north west][inner sep=0.75pt]  [font=\tiny,color={rgb, 255:red, 0; green, 0; blue, 0 }  ,opacity=1 ]  {$d+1$};
\draw (66.93,24.85) node [anchor=north west][inner sep=0.75pt]  [font=\tiny,color={rgb, 255:red, 0; green, 0; blue, 0 }  ,opacity=1 ]  {$d$};
\draw (193.67,51.27) node [anchor=north west][inner sep=0.75pt]  [font=\tiny,color={rgb, 255:red, 0; green, 0; blue, 0 }  ,opacity=1 ]  {$d+1$};
\draw (231.66,31.13) node [anchor=north west][inner sep=0.75pt]  [font=\tiny,color={rgb, 255:red, 0; green, 0; blue, 0 }  ,opacity=1 ]  {$d$};
\draw (266.49,28.8) node [anchor=north west][inner sep=0.75pt]  [font=\tiny,color={rgb, 255:red, 0; green, 0; blue, 0 }  ,opacity=1 ]  {$d$};
\draw (55.09,92.19) node [anchor=north west][inner sep=0.75pt]  [font=\tiny]  {$2$};
\draw (71.35,92.76) node [anchor=north west][inner sep=0.75pt]  [font=\tiny]  {$2$};
\draw (89.29,60.3) node [anchor=north west][inner sep=0.75pt]  [font=\tiny]  {$2$};
\draw (216.51,85.79) node [anchor=north west][inner sep=0.75pt]  [font=\tiny]  {$2$};
\draw (248.47,88.64) node [anchor=north west][inner sep=0.75pt]  [font=\tiny]  {$2$};
\draw (264.58,87.23) node [anchor=north west][inner sep=0.75pt]  [font=\tiny]  {$2$};

\end{tikzpicture}}
    \caption{Two possible ways to add genus $U$, on left, and $D$, on right. We omit from the picture ends of degree $1$ to avoid clutter. The active path is thickened. Note that adding $U$ increases degree of active edge from right to left, while adding $D$ decreases the degree. This figure is reproduced from \cite{troptev}.}
    \label{fig:2genusoptions}
\end{figure}

\begin{figure}[tb]
    \centering
     \resizebox{.9\textwidth}{!}{\input{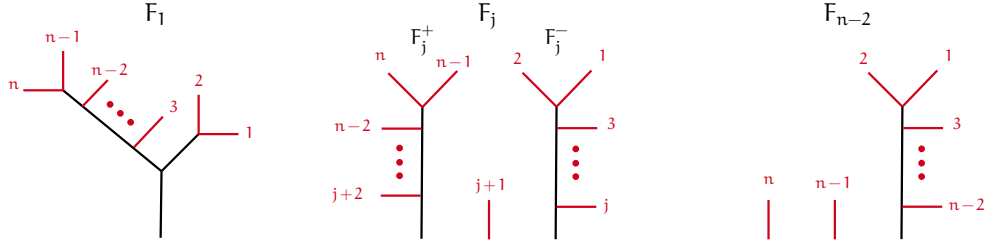}}
    \caption{Marked fragments that attach to the horizontal edge of $\tilde{T}$ to obtain the base graph $T$. We denote by $F_j^-$ the connected component that contains the marks with the lowest indices, and $F_j^+$  the one containing the highest labels. This figure is reproduced from \cite{troptev}.}
    \label{fig:markfrag}
\end{figure}
Due to the structure of $\overline{\Gamma}$, each tropical cover $\Gamma \to T$ in this fiber decomposes into two parts:
\begin{enumerate}
  \item A \emph{genus part}, consisting of the unique degree 2, genus one cover, followed by a chain of $g-1$ genus fragments, each appearing in one of two configurations (labeled \textit{U} or \textit{D} as shown in Figure \ref{fig:2genusoptions}).
  \item A \emph{marked-tree part}, a trivalent tree carrying the $g+3$ marked ends, attached to the genus part along a single active edge. The authors show that there are $n-2$ ways to organize the marked points, which they called marked fragments shown in Figure \ref{fig:markfrag}. \end{enumerate}
All admissible gluings of these two components form a finite collection of covers. 

For each constructed cover, the \emph{multiplicity} is determined from the local degree of the map $F_s \times F_t$. The local degree is the product of a local Hurwitz numbers factor, a dilation factor, and an automorphism factor. The local Hurwitz factor is proven to be $1$. The matrix associated to the dilation factor splits into a block corresponding to the genus part (whose determinant equals $2^{g}$) and another for the marked-tree part (determinant $1$). Each cover has an automorphism group of order $2^{g}$,
switching pairs of simple branch points attached to the same vertex and their preimages, so the ratio of lattice index to automorphism order yields multiplicity one. 

\begin{figure}[tb]
    \centering
     \resizebox{\textwidth}{!}{\input{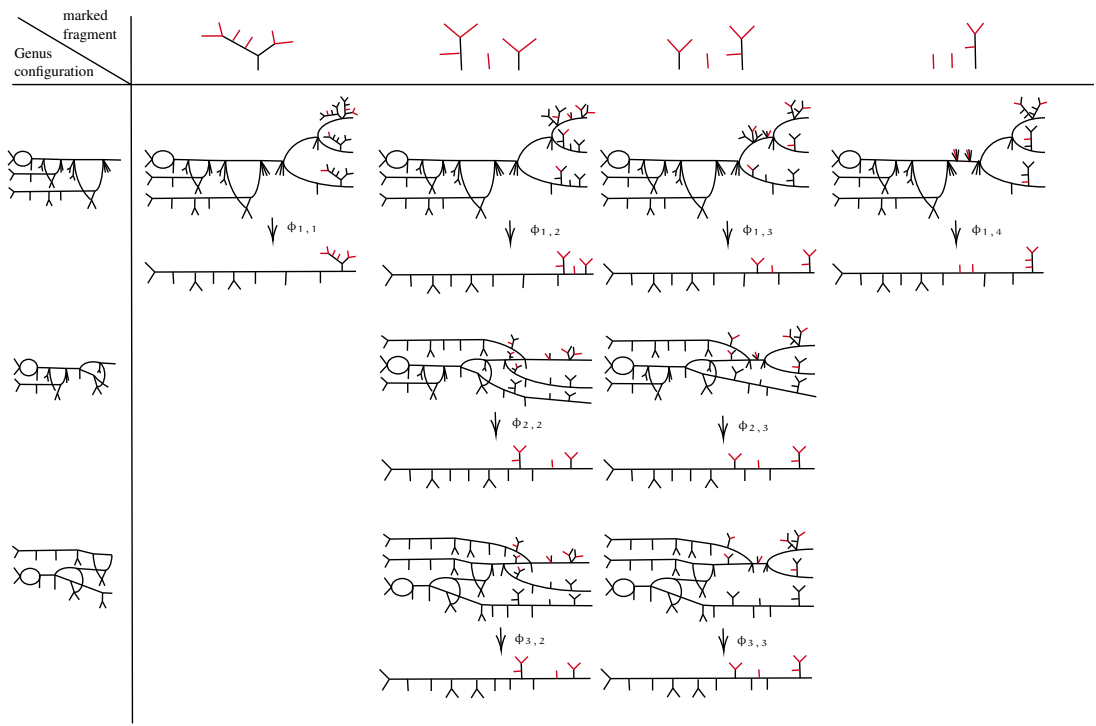}}
    \caption{The grid of solutions constructed for counting covers when $g=3, n=6$. Adapted from \cite{troptev}.}
    \label{fig:g=3grid}
\end{figure}

Cavalieri and Dawson then complete the enumeration of tropical covers by organizing all constructed solutions into (but not filling) a rectangular grid, with rows indexed by genus parts and columns by marked-tree fragments, as demonstrated in Figure \ref{fig:g=3grid}. Each genus part corresponds to a word in the letters $U$ and $D$, encoding a lattice path that records the active edge degree. Paths of length $g-1$ beginning at height $1$ and never dropping below it correspond bijectively to valid genus parts, and the authors show that the number of such paths ending at height at least $d - 2i$ (for $d = g + 1$) is $A_{d,\ge d - 2i} = \binom{d - 1}{i}$. For a fixed genus type, compatible marked-tree fragments $F_j$ satisfy $i \le j \le n - 2 - i$, so the total number of covers in column $j$ is $A_{d,\ge d - j - 1}$. Summing over all columns gives
\[
|(F_s \times F_t)^{-1}(p)| = \sum_{m=0}^{d-1} \binom{d-1}{m} = 2^{d-1} = 2^{g}.
\]
Since each cover contributes multiplicity one, the fiber over $p$ consists of at least $2^{g}$ distinct covers.

 The authors finally exclude any further solutions by showing no other marked fragments work, there is no other way to form a genus part of the cover with independent cycle lengths, and all joins on $\Gamma$ must occur in a row. This concludes their proof of $\Tev^\trop_g = 2^g$ when $d=g+1$ and $n=g+3$.

\section{Generalizations of tropical Tevelev degrees}
This paper considers $3$ generalizations of the previously defined tropical Tevelev degrees: varying $\ell$ to be a positive and negative integers as done in \cite{CPS:Tevdeg} with algebraic Tevelv degrees, and equipping marked points with arbitrary ramification profiles, introducing \textit{tropical generalized Tevelev degrees}, corresponding to algebraic generalized Tevelev degrees \cite{GenTev}.

\subsection{Varying $\ell$}

For an integer $\ell$, we can generalize the conditions on degree and the number of marked points to let $d=g+1+\ell$ and $n=g+3+2\ell$. These conditions still ensure that the dimension of the moduli space of tropical admissible covers $\Hur{g,d,n}^{\trop}$ equals the sum of the dimensions of $\Mgn{g,n}^\trop$ and $\Mgn{0,n}^\trop$. The degree of $(\Fs\times\Ft)$ is the \textit{tropical Tevelev degree} $\Tev_{g,\ell}^\trop$.

Computing tropical Tevelev degrees $\Tev_{g,\ell}^\trop$ is the same combinatorial inverse problem as solved in \cite{troptev}. We start by considering the case of positive $\ell$, where the degree and number of marked points increases for a given genus. This case uses the techniques previously developed. The next section considers the case of negative $\ell$. Navigating this case involves new techniques to count the number of covers that are no longer attainable compared to the $\ell=0$ base case due to the decrease in the degree. We require that $n\geq 3$ and thus $g+3+2\ell\geq 3$, requiring $g \geq -2\ell$.  

Cavalieri and Dawson's correspondence theorem, Theorem \ref{thm:correspondence}, still holds for positive and negative $\ell$. The proof remains unchanged when varying $\ell$. Due to this, we have provided tropical proofs of Tevelev degree computations done in \cite{CPS:Tevdeg}. These proofs first provide a combinatorial understanding of why increasing to a positive $\ell$ does not change the count of $2^g$. This is because we build the grid of solutions using the same genus sections of the covers and connect them to the marked point sections of the covers from the genus $g+2\ell$ covers from the $\ell=0$ case. The second proof provides a combinatorial understanding of the defect from $2^g$. We are subtracting away sections of the grid where ${g\choose i}-{g \choose i-1}$ corresponds to how many rows we are removing and $g-2i+1$ is how many columns wide that group of rows is.

\subsection{Tropical generalized Tevelev degrees}

We make a definition of tropical generalized Tevelev degrees following the algebraic one from \cite{GenTev}. We consider the tropical version of the morphism from \ref{eq:genforgfinitemap}: it is a map of weighted cone complexes with integral structures and there exists a notion of local degree and well defined global degree.

\begin{definition}\label{def:gentroptev}
Let $g,\ell$ and $d$ be as previously defined and fix $k\geq3$ vectors of positive integers \[\mu_h=(e_{h,1},\dots,e_{h,r_h})\in\Z_{\geq1}^{r_h}\]
with $r_h\geq1$ for $h=1,\dots,k$. Require that equations \ref{eq:3},\ref{eq:4},\ref{eq:5} all hold. Let $\Hur{g,\ell,\mu_1,\dots,\mu_k}^\trop$ be the moduli space of tropical admissible covers defined in Definition \ref{admtrop} where $\mu_i$ is the ramification profile of the $i^{th}$ marked point.
Define $n=\sum_{h=1}^kr_h$, consider the morphism of tropical moduli spaces:
\begin{equation}
    \Fs\times \Ft: \Hur{g,\ell,\mu_1,\dots,\mu_k}^\trop \to \Mgn{g,n}^\trop \times \Mgn{0,k}^\trop.
\end{equation}
After refining the cone complex structures of source and target, we may assume that $\Fs\times \Ft$ is a morphism of generalized cone complexes mapping cones homeomorphically onto cones (but not necessarily mapping the lattice isomorphically to the lattice of the image cone). We define the {\bf tropical generalized Tevelev degree} to be:
\begin{equation}
    \Tev^\trop_{g,\ell,\mu_1,\dots,\mu_k}:={\dg (\Fs\times \Ft)}
\end{equation}
\end{definition}
   \begin{remark}
   Following the setup of tropical Tevelev degrees, we chose to unmark the legs corresponding to simple branch points in the tropical Hurwitz space and therefore are missing the denominator of $b!$ with respect to the algebraic definition.
\end{remark}
The local degree of $\Fs\times\Ft$ differs slightly than in the original case (Equation \ref{eq:locdegforlater}) due to the fact that we can no longer assume for each edge $e\in T$, there is at most one edge in $\phi^{-1}(e)$ with expansion factor $>1$. Therefore we reintroduce the factor of $\prod_{e\in CE(T)}\frac{\prod_{\phi(e') = e}m_{e'}}{M_e}$, where $CE(T)$ denotes the set of compact edges of $T$ and for $e$ any compact edge of $T$, $M_e:=\lcm(\{m_{e'}| e'\in \Gamma, \phi(e')=e\})$, that was previously equal to $1$. The local degree is given by:
 \begin{equation}\label{eq:locdegforlater}
     \deg_x (\Fs\times \Ft) =  \frac{|\Aut(\overline{\Gamma})|}{|\Aut(\phi)|}\cdot \prod_{v\in V(\Gamma)} H_v \cdot \left| \det(M_{\sigma_\Theta}) \right|\cdot\prod_{e\in CE(T)}\frac{\prod_{\phi(e') = e}m_{e'}}{M_e},
 \end{equation}
where $M_{\sigma_\Theta}$ is the matrix whose rows express the lengths of the compact edges of $\Fs(\Gamma)$ and $\Ft(T)$ as linear functions of the lengths of the edges of $\Gamma$.
\section{Tropical Tevelev degrees $\Tev^\trop_{g,\ell}$ for $\ell>0$}
\label{sec:ell>0}

When $\ell$ becomes positive, the degree and number of marked points both increase. When comparing to the $\ell=0$ case computed in \cite{troptev}, the degree becomes $\ell$ larger and there is $2\ell$ more marked points for a given genus. When constructing the covers $\Gamma \to T$ for positive $\ell$, we end up with the same genus part of the cover $\gGamma \to \gT$ as the $\ell = 0$ case. From here we add on the marked fragments from the $g+2\ell$ covers of the $\ell=0$ case. Piecing together these different pieces from the $\ell=0$ case results in the same total count of $2^g$. We now go over some examples in low genera and low $\ell$ to demonstrate these ideas and then prove the total count.
\newpage
\subsection{Examples in low genera and low $\ell$}
\subsubsection{$\ell=1$:}
Starting with $g=1$, in order to compute $\Tev^\trop_{1,1}$, we compute the degree of the map \begin{equation*}
   (\Fs\times\Ft):\Hur{1,3,6}^\trop \to \Mgn{1,6}^\trop \times \Mgn{0,6}^\trop.
\end{equation*}
Consider the point $p = (\overline{\Gamma}, \overline{T})\in \Mgn{1,6}^\trop \times \Mgn{0,6}^\trop$ depicted in Figure \ref{fig:l=1g=1image}. The set $(\Fs\times \Ft)^{-1}(p)$ consists of covers $\phi: \Gamma\to T$ such that $T$ stabilizes to 
$\overline{T}$ when forgetting  the six marked ends with branching data $(2)$,  and $\Gamma$ stabilizes to $\overline\Gamma$ when forgetting the six marked ends with expansion factor $2$ as well as all the unmarked ends.
\begin{figure}[tb]
    \centering
    \tikzset{every picture/.style={line width=0.75pt}} 

\begin{tikzpicture}[x=0.75pt,y=0.75pt,yscale=-1,xscale=1]

\draw  [line width=0.75]  (39,53) .. controls (39,41.95) and (54.67,33) .. (74,33) .. controls (93.33,33) and (109,41.95) .. (109,53) .. controls (109,64.05) and (93.33,73) .. (74,73) .. controls (54.67,73) and (39,64.05) .. (39,53) -- cycle ;
\draw [line width=0.75]    (109,53) -- (224.5,52) ;
\draw [color={rgb, 255:red, 208; green, 2; blue, 27 }  ,draw opacity=1 ][line width=1.5]    (132,53) -- (130.5,37) ;
\draw [color={rgb, 255:red, 208; green, 2; blue, 27 }  ,draw opacity=1 ][line width=1.5]    (153,53) -- (151.5,37) ;
\draw [color={rgb, 255:red, 208; green, 2; blue, 27 }  ,draw opacity=1 ][line width=1.5]    (224.5,52) -- (232.5,37) ;
\draw [color={rgb, 255:red, 208; green, 2; blue, 27 }  ,draw opacity=1 ][line width=1.5]    (232.5,65) -- (224.5,52) ;
\draw [line width=0.75]    (420.37,54.93) -- (308.97,55.38) ;
\draw [color={rgb, 255:red, 208; green, 2; blue, 27 }  ,draw opacity=1 ][line width=1.5]    (308.97,55.38) -- (300.19,37.81) ;
\draw [color={rgb, 255:red, 208; green, 2; blue, 27 }  ,draw opacity=1 ][line width=1.5]    (308.97,55.38) -- (293.15,64.99) ;
\draw [color={rgb, 255:red, 208; green, 2; blue, 27 }  ,draw opacity=1 ][line width=1.5]    (420.37,54.93) -- (433.88,42.87) ;
\draw [color={rgb, 255:red, 208; green, 2; blue, 27 }  ,draw opacity=1 ][line width=1.5]    (420.37,54.93) -- (437.31,68.17) ;
\draw [color={rgb, 255:red, 208; green, 2; blue, 27 }  ,draw opacity=1 ][line width=1.5]    (346,55) -- (346,39) ;
\draw [color={rgb, 255:red, 208; green, 2; blue, 27 }  ,draw opacity=1 ][line width=1.5]    (385,56) -- (385,40) ;
\draw [color={rgb, 255:red, 208; green, 2; blue, 27 }  ,draw opacity=1 ][line width=1.5]    (180,52) -- (178.5,36) ;
\draw [color={rgb, 255:red, 208; green, 2; blue, 27 }  ,draw opacity=1 ][line width=1.5]    (201,52) -- (199.5,36) ;

\draw (234.5,68.4) node [anchor=north west][inner sep=0.75pt]  [font=\footnotesize,color={rgb, 255:red, 208; green, 2; blue, 27 }  ,opacity=1 ]  {$1$};
\draw (235,23.4) node [anchor=north west][inner sep=0.75pt]  [font=\footnotesize,color={rgb, 255:red, 208; green, 2; blue, 27 }  ,opacity=1 ]  {$2$};
\draw (148,23.4) node [anchor=north west][inner sep=0.75pt]  [font=\footnotesize,color={rgb, 255:red, 208; green, 2; blue, 27 }  ,opacity=1 ]  {$5$};
\draw (126,23.4) node [anchor=north west][inner sep=0.75pt]  [font=\footnotesize,color={rgb, 255:red, 208; green, 2; blue, 27 }  ,opacity=1 ]  {$6$};
\draw (340.74,24.89) node [anchor=north west][inner sep=0.75pt]  [font=\footnotesize,color={rgb, 255:red, 208; green, 2; blue, 27 }  ,opacity=1 ,rotate=-356.58]  {$4$};
\draw (381.17,26.23) node [anchor=north west][inner sep=0.75pt]  [font=\footnotesize,color={rgb, 255:red, 208; green, 2; blue, 27 }  ,opacity=1 ,rotate=-358.17]  {$3$};
\draw (434.28,29.42) node [anchor=north west][inner sep=0.75pt]  [font=\footnotesize,color={rgb, 255:red, 208; green, 2; blue, 27 }  ,opacity=1 ,rotate=-359.91]  {$2$};
\draw (440.88,60.23) node [anchor=north west][inner sep=0.75pt]  [font=\footnotesize,color={rgb, 255:red, 208; green, 2; blue, 27 }  ,opacity=1 ,rotate=-357.22]  {$1$};
\draw (71,17.4) node [anchor=north west][inner sep=0.75pt]  [font=\footnotesize]  {$ \begin{array}{l}
x_{1}\\
\end{array}$};
\draw (111,37.4) node [anchor=north west][inner sep=0.75pt]  [font=\footnotesize]  {$x_{2}$};
\draw (134.5,38.4) node [anchor=north west][inner sep=0.75pt]  [font=\footnotesize]  {$x_{3}$};
\draw (160,37.4) node [anchor=north west][inner sep=0.75pt]  [font=\footnotesize]  {$x_{4}$};
\draw (323,38.4) node [anchor=north west][inner sep=0.75pt]  [font=\footnotesize]  {$L_{1}$};
\draw (294.74,23.89) node [anchor=north west][inner sep=0.75pt]  [font=\footnotesize,color={rgb, 255:red, 208; green, 2; blue, 27 }  ,opacity=1 ,rotate=-356.58]  {$5$};
\draw (281.74,66.89) node [anchor=north west][inner sep=0.75pt]  [font=\footnotesize,color={rgb, 255:red, 208; green, 2; blue, 27 }  ,opacity=1 ,rotate=-356.58]  {$6$};
\draw (357,38.4) node [anchor=north west][inner sep=0.75pt]  [font=\footnotesize]  {$L_{2}$};
\draw (396,38.4) node [anchor=north west][inner sep=0.75pt]  [font=\footnotesize]  {$L_{3}$};
\draw (196,22.4) node [anchor=north west][inner sep=0.75pt]  [font=\footnotesize,color={rgb, 255:red, 208; green, 2; blue, 27 }  ,opacity=1 ]  {$3$};
\draw (174,22.4) node [anchor=north west][inner sep=0.75pt]  [font=\footnotesize,color={rgb, 255:red, 208; green, 2; blue, 27 }  ,opacity=1 ]  {$4$};
\draw (182.5,37.4) node [anchor=north west][inner sep=0.75pt]  [font=\footnotesize]  {$x_{5}$};
\draw (208,36.4) node [anchor=north west][inner sep=0.75pt]  [font=\footnotesize]  {$x_{6}$};

\end{tikzpicture}
    \caption{The point $p$ in $\Mgn{1,6}^\trop \times \Mgn{0,6}^\trop$. We have $x_1<<x_2<<x_3<<x_4<<x_5<<x_6<<L_1<<L_2<<L_3$.}
    \label{fig:l=1g=1image}
\end{figure}

We start constructing $\Gamma $ and $T$ by creating the genus zero part, containing the marked points. Due to the cover being degree 3, there will be one edge cutting from the active edge to end with a transposition and at most one edge joining the active edge, because the genus could be degree 2 or 3. Therefore, we have three options for the genus zero part of $\Gamma$, shown in Figure \ref{fig:g=1cutjoin}. 
\begin{figure}[tb]
    \centering
    \tikzset{every picture/.style={line width=0.75pt}} 

\begin{tikzpicture}[x=0.75pt,y=0.75pt,yscale=-1,xscale=1]

\draw [line width=1.5]    (24.5,51) -- (180.5,51) ;
\draw [line width=1.5]    (224.5,50) -- (380.5,50) ;
\draw [line width=1.5]    (424.5,50) -- (580.5,50) ;
\draw  [draw opacity=0] (24.08,15.22) .. controls (24.14,15.22) and (24.2,15.22) .. (24.26,15.22) .. controls (56.08,15.01) and (81.98,30.86) .. (82.11,50.62) .. controls (82.11,51.08) and (82.1,51.54) .. (82.08,51.99) -- (24.5,51) -- cycle ; \draw   (24.08,15.22) .. controls (24.14,15.22) and (24.2,15.22) .. (24.26,15.22) .. controls (56.08,15.01) and (81.98,30.86) .. (82.11,50.62) .. controls (82.11,51.08) and (82.1,51.54) .. (82.08,51.99) ;  
\draw  [draw opacity=0] (223.83,13.55) .. controls (223.89,13.55) and (223.95,13.55) .. (224.01,13.55) .. controls (283.51,13.16) and (331.86,28.86) .. (331.99,48.62) .. controls (331.99,49.48) and (331.91,50.34) .. (331.73,51.19) -- (224.25,49.33) -- cycle ; \draw   (223.83,13.55) .. controls (223.89,13.55) and (223.95,13.55) .. (224.01,13.55) .. controls (283.51,13.16) and (331.86,28.86) .. (331.99,48.62) .. controls (331.99,49.48) and (331.91,50.34) .. (331.73,51.19) ;  
\draw  [draw opacity=0] (180.15,84.34) .. controls (147.48,84.01) and (121.09,69.14) .. (120.7,51) -- (180.5,51) -- cycle ; \draw   (180.15,84.34) .. controls (147.48,84.01) and (121.09,69.14) .. (120.7,51) ;  
\draw  [draw opacity=0] (380.15,83.34) .. controls (320.96,82.76) and (272.9,67.93) .. (271.23,50) -- (380.5,50) -- cycle ; \draw   (380.15,83.34) .. controls (320.96,82.76) and (272.9,67.93) .. (271.23,50) ;  
\draw  [draw opacity=0] (581.63,83.53) .. controls (538.36,83.1) and (503.32,68.25) .. (502.51,50.19) -- (581.99,50.19) -- cycle ; \draw   (581.63,83.53) .. controls (538.36,83.1) and (503.32,68.25) .. (502.51,50.19) ;  

\draw (48,35.4) node [anchor=north west][inner sep=0.75pt]  [font=\footnotesize,color={rgb, 255:red, 0; green, 0; blue, 223 }  ,opacity=1 ]  {$2$};
\draw (94,35.4) node [anchor=north west][inner sep=0.75pt]  [font=\footnotesize,color={rgb, 255:red, 0; green, 0; blue, 223 }  ,opacity=1 ]  {$3$};
\draw (154,36.4) node [anchor=north west][inner sep=0.75pt]  [font=\footnotesize,color={rgb, 255:red, 0; green, 0; blue, 223 }  ,opacity=1 ]  {$2$};
\draw (239,34.4) node [anchor=north west][inner sep=0.75pt]  [font=\footnotesize,color={rgb, 255:red, 0; green, 0; blue, 223 }  ,opacity=1 ]  {$2$};
\draw (347,34.4) node [anchor=north west][inner sep=0.75pt]  [font=\footnotesize,color={rgb, 255:red, 0; green, 0; blue, 223 }  ,opacity=1 ]  {$2$};
\draw (294,34.4) node [anchor=north west][inner sep=0.75pt]  [font=\footnotesize,color={rgb, 255:red, 0; green, 0; blue, 223 }  ,opacity=1 ]  {$1$};
\draw (539,34.4) node [anchor=north west][inner sep=0.75pt]  [font=\footnotesize,color={rgb, 255:red, 0; green, 0; blue, 223 }  ,opacity=1 ]  {$2$};
\draw (458,34.4) node [anchor=north west][inner sep=0.75pt]  [font=\footnotesize,color={rgb, 255:red, 0; green, 0; blue, 223 }  ,opacity=1 ]  {$3$};

\end{tikzpicture}
    \caption{The three options for a genus zero graph containing one cut and at most one join.}
    \label{fig:g=1cutjoin}
\end{figure}
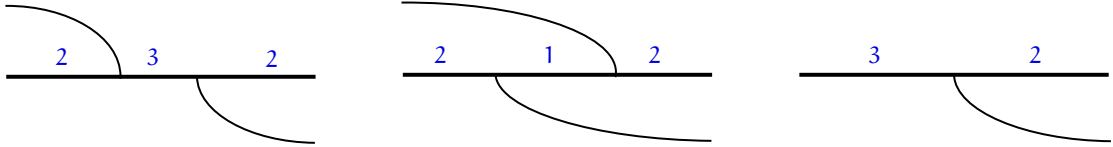

When placing marked points on the options for $\Gamma$, there are three $L_i$'s, therefore three lengths need to be free to be made long. The marked point fragments shown in \ref{fig:l=1g=1frags} each have 3 vertical lengths that when placed correctly on a cover, will allow for the three $L_i$'s to be made long. The fragment on the left hand side requires a cover with 3 edges on the left hand side covering the tree with three marked points and it needs 2 edges covering the tree on the right hand side. Therefore, the fragment on the left side of Figure \ref{fig:l=1g=1frags} can be placed on the genus zero graph in the middle of Figure \ref{fig:g=1cutjoin}. Similarly, the fragment on the right hand side of Figure \ref{fig:l=1g=1frags} can only be placed on the graph on the left of Figure \ref{fig:g=1cutjoin}. Therefore, the rightmost graph in Figure \ref{fig:g=1cutjoin} does not contribute.

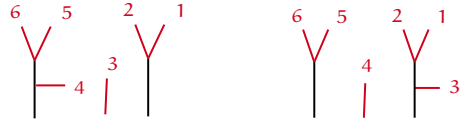
\begin{figure}[tb]
    \centering
    \tikzset{every picture/.style={line width=0.75pt}} 

\begin{tikzpicture}[x=0.75pt,y=0.75pt,yscale=-1,xscale=1]

\draw    (48.81,50.09) -- (48.81,79.33) ;
\draw [color={rgb, 255:red, 208; green, 2; blue, 27 }  ,draw opacity=1 ]   (48.81,50.09) -- (56.94,33.03) ;
\draw [color={rgb, 255:red, 208; green, 2; blue, 27 }  ,draw opacity=1 ]   (42.3,33.03) -- (48.81,50.09) ;
\draw [color={rgb, 255:red, 208; green, 2; blue, 27 }  ,draw opacity=1 ]   (49.6,62.7) -- (64.25,62.7) ;
\draw [color={rgb, 255:red, 208; green, 2; blue, 27 }  ,draw opacity=1 ]   (84.47,77.32) -- (85.27,59.2) ;
\draw    (106.1,49.08) -- (106.1,78.33) ;
\draw [color={rgb, 255:red, 208; green, 2; blue, 27 }  ,draw opacity=1 ]   (106.1,49.08) -- (114.24,32.02) ;
\draw [color={rgb, 255:red, 208; green, 2; blue, 27 }  ,draw opacity=1 ]   (99.59,32.02) -- (106.1,49.08) ;
\draw    (189.87,50.91) -- (189.87,79) ;
\draw [color={rgb, 255:red, 208; green, 2; blue, 27 }  ,draw opacity=1 ]   (189.87,50.91) -- (196.92,34.53) ;
\draw [color={rgb, 255:red, 208; green, 2; blue, 27 }  ,draw opacity=1 ]   (184.22,34.53) -- (189.87,50.91) ;
\draw [color={rgb, 255:red, 208; green, 2; blue, 27 }  ,draw opacity=1 ]   (240.48,63.99) -- (253.17,63.99) ;
\draw [color={rgb, 255:red, 208; green, 2; blue, 27 }  ,draw opacity=1 ]   (214.89,79) -- (215.58,61.6) ;
\draw    (240.48,50.91) -- (240.48,79) ;
\draw [color={rgb, 255:red, 208; green, 2; blue, 27 }  ,draw opacity=1 ]   (240.48,50.91) -- (247.53,34.53) ;
\draw [color={rgb, 255:red, 208; green, 2; blue, 27 }  ,draw opacity=1 ]   (234.83,34.53) -- (240.48,50.91) ;

\draw (67.22,59.21) node [anchor=north west][inner sep=0.75pt]  [font=\tiny,color={rgb, 255:red, 208; green, 2; blue, 27 }  ,opacity=1 ]  {$4$};
\draw (60.85,21.46) node [anchor=north west][inner sep=0.75pt]  [font=\tiny,color={rgb, 255:red, 208; green, 2; blue, 27 }  ,opacity=1 ]  {$5$};
\draw (34.59,20.96) node [anchor=north west][inner sep=0.75pt]  [font=\tiny,color={rgb, 255:red, 208; green, 2; blue, 27 }  ,opacity=1 ]  {$6$};
\draw (83.93,47.13) node [anchor=north west][inner sep=0.75pt]  [font=\tiny,color={rgb, 255:red, 208; green, 2; blue, 27 }  ,opacity=1 ]  {$3$};
\draw (118.14,20.45) node [anchor=north west][inner sep=0.75pt]  [font=\tiny,color={rgb, 255:red, 208; green, 2; blue, 27 }  ,opacity=1 ]  {$1$};
\draw (91.89,19.95) node [anchor=north west][inner sep=0.75pt]  [font=\tiny,color={rgb, 255:red, 208; green, 2; blue, 27 }  ,opacity=1 ]  {$2$};
\draw (212.24,47.97) node [anchor=north west][inner sep=0.75pt]  [font=\tiny,color={rgb, 255:red, 208; green, 2; blue, 27 }  ,opacity=1 ]  {$4$};
\draw (199.51,23.31) node [anchor=north west][inner sep=0.75pt]  [font=\tiny,color={rgb, 255:red, 208; green, 2; blue, 27 }  ,opacity=1 ]  {$5$};
\draw (176.74,22.83) node [anchor=north west][inner sep=0.75pt]  [font=\tiny,color={rgb, 255:red, 208; green, 2; blue, 27 }  ,opacity=1 ]  {$6$};
\draw (256.33,58.6) node [anchor=north west][inner sep=0.75pt]  [font=\tiny,color={rgb, 255:red, 208; green, 2; blue, 27 }  ,opacity=1 ]  {$3$};
\draw (250.12,23.31) node [anchor=north west][inner sep=0.75pt]  [font=\tiny,color={rgb, 255:red, 208; green, 2; blue, 27 }  ,opacity=1 ]  {$1$};
\draw (227.35,22.83) node [anchor=north west][inner sep=0.75pt]  [font=\tiny,color={rgb, 255:red, 208; green, 2; blue, 27 }  ,opacity=1 ]  {$2$};

\end{tikzpicture}
    \caption{Two configurations of marked points that allow for three long edges to be forgotten in the stabilization of the cover curve.}
    \label{fig:l=1g=1frags}
\end{figure}

The genus zero part of the $\Gamma \to T$s that we are constructing both use three transpositions, and the Riemann-Hurwitz formula gives that there are $6$ transpositions total, therefore we have 3 transpositions left to create $\gGamma \to \gT$. There is a unique way to form a loop with 3 transpositions. Putting this loop together with the genus zero parts discussed above, we can now place the marked fragments on $\Gamma$ and $T$. 

For one cover, start with a simple loop using three transpositions, then have an edge cut from the active edge at a distance of $y_2$ away from the loop. Next, have an edge join the active edge at a distance of $y_3+y_4$ to the right of the cutting edge. Looking at $T_1$, place a tree with three marked points to the right $y_3$ from the fourth transposition. There are three trees in $\Gamma_1$ covering this tree in $T_1$. Place the marked point 6 at the end of the tree on the edge cutting from the active edge covering $T_1$, then place the marked point 5 at the end of the tree on the active edge. On the tree lying on the edge joining the active edge, place the marked point 4 at the distance $y_6$ down from 5 and 6. Moving to the right, place the marked point 3 at $y_7$ away from the edge joining, then $y_8$ to the right, place the marked points 1 and 2 on separate trees of length $y_9$. 

Place the points in a similar way for $\Gamma_2 \to T_2$, but with the cut and join switched. Together, we have the two covers shown in Figure \ref{fig:l=1g=1}.
\begin{figure}[tb]
    \centering
     \resizebox{.8\textwidth}{!}{\input{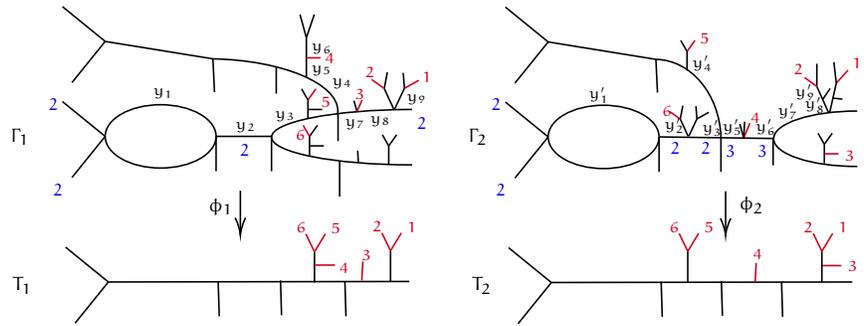}}
    \caption{The two covers in $((\Fs\times\Ft))^{-1}(p)$ for $\ell=1$ and $g=1$.}
    \label{fig:l=1g=1}
\end{figure}

We now compute the local degree of $(\Fs\times\Ft)$ at these inverse images, which gives us the multiplicities which we need to count the covers. The multiplicity of the inverse image is the product of three factors: an automorphism factor, a product of local Hurwitz numbers and a dilation factor corresponding to the determinant of the matrix representing the map $\Fs\times\Ft$. 

To calculate the local Hurwitz numbers for cover 1, every vertex in $\Gamma_1$ is either trivalent with degree one edges in all directions or two edges of degree 2 in different directions and 2 edges of degree 1 in the same direction. Both of these types of vertices have local Hurwitz number equal to 1, therefore the product of all local Hurwitz numbers is 1. Additionally, $\Gamma_2$ contains vertices with an edge of degree 3 in one direction and simple transpositions in the other 2 directions, as well as a vertex with two directions containing an edge of degree 3 and three edges of degree 1 in the third direction. The first of these vertex options has local Hurwitz number equal to 1. The second option, we are marking one of the edges of degree 1, so the local Hurwitz number is also equal to 1. All together, the product of local Hurwitz numbers is equal to 1. 

For each cover, we have one factor of 2 corresponding to switching simultaneously the two unlabeled left ends of branching type $2$ and their inverse images. Also, a second factor of 2 consists of switching the two degree $1$ edges of $\Gamma_i$ forming the loop. However, this is also a nontrivial automorphism of $\overline{\Gamma_i}$. Altogether, we have
\begin{equation}
    \frac{|\Aut(\overline{\Gamma_i})|}{|\Aut(\phi_i)|} = \frac{1}{2}.
\end{equation}

To calculate the dilation factors, we set up the following matrices representing the $x_i$'s and $L_j$'s in terms of the $y_k$'s: 
 {\scriptsize
\[\begin{array}[t]{cc}
  M_1=\begin{blockarray}{cccccccccc}
   &y_1 & y_2 & y_3 & y_4 & y_5 & y_6 & y_7 & y_8 & y_9\\
    \begin{block}{c[ccccccccc]}
      x_1 & 2 & 0 & 0 & 0 & 0 & 0 & 0 & 0 & 0\\
      x_2 & 0 & 1 & 0 & 0 & 0 & 0 & 0 & 0 & 0\\
      x_3 & 0 & 0 & 1 & 0 & 0 & 0 & 0 & 0 & 0\\
      x_4 & 0 & 0 & 0 & 1 & 0 & 0 & 0 & 0 & 0\\
      x_5 & 0 & 0 & 0 & 0 & 0 & 0 & 1 & 0 & 0\\
      x_6 & 0 & 0 & 0 & 0 & 0 & 0 & 0 & 1 & 0\\
      L_1 & 0 & 0 & 0 & 0 & 0 & 1 & 0 & 0 & 0\\
      L_2 & 0 & 0 & 0 & 1 & 1 & 0 & 2 & 0 & 0\\
      L_3 & 0 & 0 & 0 & 0 & 0 & 0 & 0 & 2 & 1\\
    \end{block}
  \end{blockarray} \\
 M_2=\begin{blockarray}{cccccccccc}
   &y'_1 & y'_2 & y'_3 & y'_4 & y'_5 & y'_6 & y'_7 & y'_8 & y'_9\\
    \begin{block}{c[ccccccccc]}
      x_1 & 2 & 0 & 0 & 0 & 0 & 0 & 0 & 0 & 0\\
      x_2 & 0 & 1 & 0 & 0 & 0 & 0 & 0 & 0 & 0\\
      x_3 & 0 & 0 & 1 & 0 & 0 & 0 & 0 & 0 & 0\\
      x_4 & 0 & 0 & 0 & 0 & 1 & 0 & 0 & 0 & 0\\
      x_5 & 0 & 0 & 0 & 0 & 0 & 1 & 0 & 0 & 0\\
      x_6 & 0 & 0 & 0 & 0 & 0 & 0 & 1 & 0 & 0\\
      L_1 & 0 & 0 & 2 & 1 & 3 & 0 & 0 & 0 & 0\\
      L_2 & 0 & 0 & 0 & 0 & 0 & 3 & 2 & 1 & 0\\
      L_3 & 0 & 0 & 0 & 0 & 0 & 0 & 0 & 0 & 1\\
    \end{block}
  \end{blockarray}
  \end{array}
\]
}
We see that $|\det M_i |=2$ for $i = 1,2$.
All together the multiplicity of each cover in $(\Fs\times\Ft)^{-1}(p)$ is $1\cdot \frac{1}{2}\cdot 2=1$. Since we have two inverse images each with multiplicity one,  we obtain $\Tev^\trop_{1,1} = 2$.

Next, looking at $g=2$, there are 4 preimages as shown in Figure \ref{fig:l=1g=2}, each with multiplicity 1, so $\Tev_{2,1}^\trop=4$.
\begin{figure}[tb]
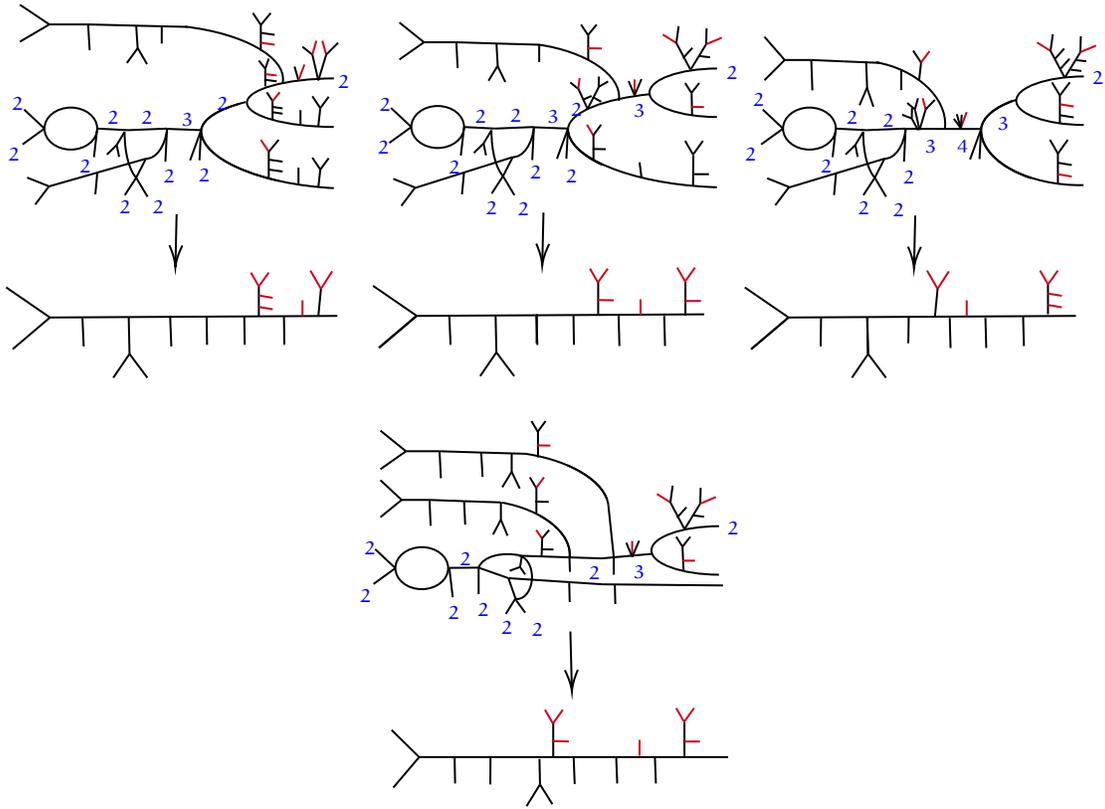

    \centering
    \include{Figures/l=1g=2}
    \caption{The four covers in $((\Fs\times\Ft))^{-1}(p)$ for $\ell=1$ and $g=2$.}
    \label{fig:l=1g=2}
\end{figure}


\subsubsection{$\ell=2$:} 
To compute $\Tev_{1,2}^\trop$, we look for the degree of the map
\begin{equation}
    \Fs\times \Ft: \Hur{1,4,8}^\trop \to \Mgn{1,8}^\trop \times \Mgn{0,8}^\trop.
\end{equation}

We consider the point $p=(\overline{\Gamma},\overline{T})\in \Mgn{1,8}^\trop \times \Mgn{0,8}^\trop$ built according to the general form shown in Figure \ref{fig:genim}. There are 2 preimages as shown in Figure \ref{fig:l=2g=1}, each with multiplicity 1, so $\Tev_{1,2}^\trop=2$. Note that the genus is formed in 3 simple transpositions, exactly as in the $\ell=0$ and $1$ cases.

\begin{figure}[tb]
    \centering
     \resizebox{\textwidth}{!}{\input{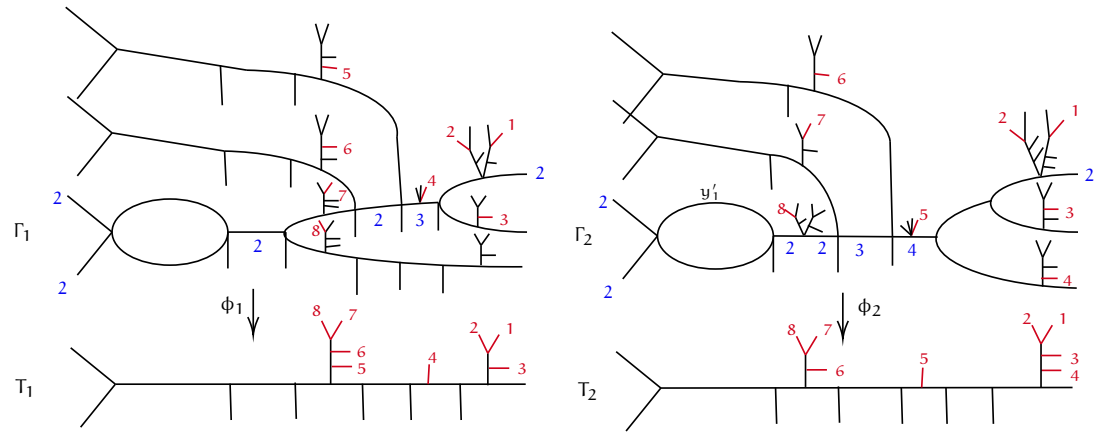}}
    \caption{The two covers in $((\Fs\times\Ft))^{-1}(p)$ for $\ell=2$ and $g=1$.}
    \label{fig:l=2g=1}
\end{figure}

\subsection{Construction of $2^g$ solutions}
In this section we make explicit and generalize the constructions from the examples in the previous section and construct $2^g$ preimages of $p$ for any genus $g$. We build covers containing all the genus, construct trees containing the $n$ marked ends, and show that the multiplicity of every cover constructed is equal to 1. 

\subsubsection*{The genus part}
We construct the genus part using the techniques in \cite{troptev}. Every cover starts with a degree 2 loop using 3 simple transpositions, and a fourth simple transposition leading into the active edge. For $g>1$, we then add one of two genus options, $U$ and $D$ as shown in Figure \ref{fig:2genusoptions}, for each loop. Note that $U$ can always be added, while $D$ can only be added if the degree of the active edge is greater than 2. If the fragment $D$ has been used $i$ times, we obtain a connected cover of degree $g+1-i$. We complete it to a degree $g+1+\ell$ cover by adding $i+\ell$ disjoint copies of the base curve, each mapping of degree one. Call these disjoint copies of the base curve that are added, \textit{joined ends}.

\subsubsection*{The marked tree part}
We construct the marked tree part by taking the fragment $F_j$ shown in Figure \ref{fig:lpositivefragments} when $1+\ell\leq j \leq n-\ell-2$. These markings are placed on the tree in the same way as in Section \ref{sec:constructing-covers}. Since we are constructing the covers from the same pieces as we did in Section \ref{sec:prooftroptev}, no work is needed to show the multiplicity of every cover constructed equals 1. 

\begin{figure}[tb]
    \centering
    \tikzset{every picture/.style={line width=0.75pt}} 

\begin{tikzpicture}[x=0.65pt,y=0.65pt,yscale=-1,xscale=1]

\draw [line width=1.5]    (109.53,225.47) -- (110.33,107.67) ;
\draw [line width=1.5]    (229.53,225.87) -- (230.33,107.27) ;
\draw [color={rgb, 255:red, 208; green, 2; blue, 27 }  ,draw opacity=1 ][line width=1.5]    (169.93,190.87) -- (169.93,226.07) ;
\draw [color={rgb, 255:red, 208; green, 2; blue, 27 }  ,draw opacity=1 ][line width=1.5]    (141.53,75.27) -- (110.33,107.67) ;
\draw [color={rgb, 255:red, 208; green, 2; blue, 27 }  ,draw opacity=1 ][line width=1.5]    (261.53,74.87) -- (230.33,107.27) ;
\draw [color={rgb, 255:red, 208; green, 2; blue, 27 }  ,draw opacity=1 ][line width=1.5]    (79.93,77.27) -- (110.33,107.67) ;
\draw [color={rgb, 255:red, 208; green, 2; blue, 27 }  ,draw opacity=1 ][line width=1.5]    (199.93,76.87) -- (230.33,107.27) ;
\draw [color={rgb, 255:red, 208; green, 2; blue, 27 }  ,draw opacity=1 ][line width=1.5]    (73.93,126.87) -- (109.53,126.87) ;
\draw [color={rgb, 255:red, 208; green, 2; blue, 27 }  ,draw opacity=1 ][line width=1.5]    (73.13,186.87) -- (108.73,186.87) ;
\draw [color={rgb, 255:red, 208; green, 2; blue, 27 }  ,draw opacity=1 ][line width=1.5]    (231.13,126.47) -- (266.73,126.47) ;
\draw [color={rgb, 255:red, 208; green, 2; blue, 27 }  ,draw opacity=1 ][line width=1.5]    (230.33,196.87) -- (265.93,196.87) ;
\draw  [color={rgb, 255:red, 208; green, 2; blue, 27 }  ,draw opacity=1 ][fill={rgb, 255:red, 208; green, 2; blue, 27 }  ,fill opacity=1 ][line width=1.5]  (87.93,144.27) .. controls (87.93,143.16) and (88.83,142.27) .. (89.93,142.27) .. controls (91.04,142.27) and (91.93,143.16) .. (91.93,144.27) .. controls (91.93,145.37) and (91.04,146.27) .. (89.93,146.27) .. controls (88.83,146.27) and (87.93,145.37) .. (87.93,144.27) -- cycle ;
\draw  [color={rgb, 255:red, 208; green, 2; blue, 27 }  ,draw opacity=1 ][fill={rgb, 255:red, 208; green, 2; blue, 27 }  ,fill opacity=1 ][line width=1.5]  (87.93,157.07) .. controls (87.93,155.96) and (88.83,155.07) .. (89.93,155.07) .. controls (91.04,155.07) and (91.93,155.96) .. (91.93,157.07) .. controls (91.93,158.17) and (91.04,159.07) .. (89.93,159.07) .. controls (88.83,159.07) and (87.93,158.17) .. (87.93,157.07) -- cycle ;
\draw  [color={rgb, 255:red, 208; green, 2; blue, 27 }  ,draw opacity=1 ][fill={rgb, 255:red, 208; green, 2; blue, 27 }  ,fill opacity=1 ][line width=1.5]  (87.93,169.07) .. controls (87.93,167.96) and (88.83,167.07) .. (89.93,167.07) .. controls (91.04,167.07) and (91.93,167.96) .. (91.93,169.07) .. controls (91.93,170.17) and (91.04,171.07) .. (89.93,171.07) .. controls (88.83,171.07) and (87.93,170.17) .. (87.93,169.07) -- cycle ;
\draw  [color={rgb, 255:red, 208; green, 2; blue, 27 }  ,draw opacity=1 ][fill={rgb, 255:red, 208; green, 2; blue, 27 }  ,fill opacity=1 ][line width=1.5]  (245.53,145.07) .. controls (245.53,143.96) and (246.43,143.07) .. (247.53,143.07) .. controls (248.64,143.07) and (249.53,143.96) .. (249.53,145.07) .. controls (249.53,146.17) and (248.64,147.07) .. (247.53,147.07) .. controls (246.43,147.07) and (245.53,146.17) .. (245.53,145.07) -- cycle ;
\draw  [color={rgb, 255:red, 208; green, 2; blue, 27 }  ,draw opacity=1 ][fill={rgb, 255:red, 208; green, 2; blue, 27 }  ,fill opacity=1 ][line width=1.5]  (245.53,157.87) .. controls (245.53,156.76) and (246.43,155.87) .. (247.53,155.87) .. controls (248.64,155.87) and (249.53,156.76) .. (249.53,157.87) .. controls (249.53,158.97) and (248.64,159.87) .. (247.53,159.87) .. controls (246.43,159.87) and (245.53,158.97) .. (245.53,157.87) -- cycle ;
\draw  [color={rgb, 255:red, 208; green, 2; blue, 27 }  ,draw opacity=1 ][fill={rgb, 255:red, 208; green, 2; blue, 27 }  ,fill opacity=1 ][line width=1.5]  (245.53,169.87) .. controls (245.53,168.76) and (246.43,167.87) .. (247.53,167.87) .. controls (248.64,167.87) and (249.53,168.76) .. (249.53,169.87) .. controls (249.53,170.97) and (248.64,171.87) .. (247.53,171.87) .. controls (246.43,171.87) and (245.53,170.97) .. (245.53,169.87) -- cycle ;

\draw (158.33,13.47) node [anchor=north west][inner sep=0.75pt]  [font=\LARGE]  {$F_{j}$};
\draw (266.73,54.47) node [anchor=north west][inner sep=0.75pt]    {$\textcolor[rgb]{0.82,0.01,0.11}{1}$};
\draw (190.33,57.67) node [anchor=north west][inner sep=0.75pt]    {$\textcolor[rgb]{0.82,0.01,0.11}{2}$};
\draw (274.33,116.07) node [anchor=north west][inner sep=0.75pt]    {$\textcolor[rgb]{0.82,0.01,0.11}{3}$};
\draw (66.73,56.87) node [anchor=north west][inner sep=0.75pt]    {$\textcolor[rgb]{0.82,0.01,0.11}{n}$};
\draw (121.13,60.07) node [anchor=north west][inner sep=0.75pt]    {$\textcolor[rgb]{0.82,0.01,0.11}{n-1}$};
\draw (30.33,117.67) node [anchor=north west][inner sep=0.75pt]    {$\textcolor[rgb]{0.82,0.01,0.11}{n-2}$};
\draw (271.13,188.47) node [anchor=north west][inner sep=0.75pt]    {$\textcolor[rgb]{0.82,0.01,0.11}{j}$};
\draw (156.73,170.07) node [anchor=north west][inner sep=0.75pt]    {$\textcolor[rgb]{0.82,0.01,0.11}{j+1}$};
\draw (28.33,178.07) node [anchor=north west][inner sep=0.75pt]    {$\textcolor[rgb]{0.82,0.01,0.11}{j+2}$};

\end{tikzpicture}
    \caption{The marked fragment that attaches to the horizontal edge of $\tilde{T}$ to obtain the base graph $T$.}
    \label{fig:lpositivefragments}
\end{figure}
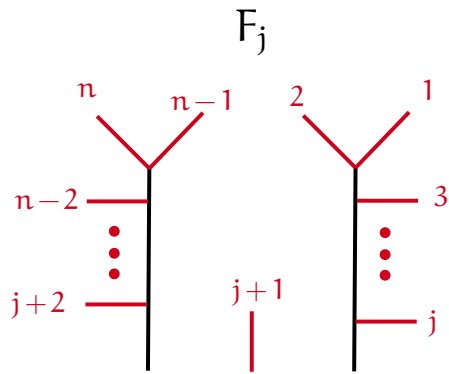
The number of marked fragments when $\ell>0$ is $n-2-2\ell=g+3+2\ell-2-2\ell=g-1$. There is a bijection between this solution set and the solution set constructed in \cite{troptev}. Since each cover counts with multiplicity equal to one, we have thus far shown that $\Tev_{g,\ell}^\trop\geq 2^g$ for positive $\ell$.

\subsection{Excluding further solutions} \label{sec:ellposexclude}
In this section, we exclude any further cover $\Gamma \to T$ from mapping to  the chosen point $p= (\overline{\Gamma},\overline{T})\in \Mgn{g,n}^\trop\times \Mgn{0,n}^\trop$. We follow similar arguments made in \cite{troptev}.

\subsubsection*{Fragments attaching to the active edge}
The same argument as in Section 4.3 in \cite{troptev} excludes all marked fragments besides those shown in Figure \ref{fig:markfrag}. We now exclude marked fragments $F_1$ and $F_{n-2}$. 

Recall that marked fragments $F_1$ and $F_{n-2}$ can only be placed on covers that have $i=0$, meaning that zero joined ends have to be added to complete the degree $d=g+1$ cover. When $\ell$ is positive, we must add $\ell$ joined ends to obtain a cover of degree $d+g+1+\ell$. Therefore, even when fragment $D$ is used zero times, $i>0$. Moreover, only the marked fragments shown in Figure \ref{fig:lpositivefragments} can be attached to covers.

\subsubsection*{Splitting of transpositions}
Recall that the genus part of the graph $\gGamma \to \gT $ contains at least $3g$ transpositions. 

\begin{claim}
    The marked tree part of the graph needs at least $g+2\ell$ transpositions.
\end{claim}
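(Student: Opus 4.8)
The plan is to compare the two halves of the cover via Riemann--Hurwitz. Any $\phi\colon\Gamma\to T$ in the fibre over $p$ is a connected genus-$g$ cover of degree $d=g+1+\ell$, so Riemann--Hurwitz produces a total of $2g+2d-2=4g+2\ell$ transpositions, to be split between the genus part $\gGamma\to\gT$ and the marked tree part. First I would cut along the junction (active) edge $e_0$ and run Riemann--Hurwitz on each side separately. Writing $\lambda\vdash d$ for the profile over $e_0$ and $P$ for its number of parts, the marked tree part carries no genus (it stabilises to the tree factor of $\overline\Gamma$), so its source has genus $0$; with $c_2$ its number of components, and since $e_0$ contributes $d-P$ while the $n$ marked ends contribute $0$ to the branching, this yields $t_{\mathrm{marked}}=d+P-2c_2$. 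The identical computation on the genus side gives $t_{\mathrm{genus}}=d+P-2c_1+2g_1$, where $c_1,g_1$ record the components and genus there.

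Next I would impose connectedness of $\Gamma$. The $P$ edges over $e_0$ glue the $c_1$ genus-side components to the $c_2$ marked-side components along a bipartite graph $H$; connectedness forces $H$ connected, and because the marked side is genus $0$ one has $g=g_1+b_1(H)=g_1+P-c_1-c_2+1$. Substituting this to eliminate $P$ collapses the formula to $t_{\mathrm{marked}}=g+\ell+(g-g_1)+(c_1-c_2)$, so the claim $t_{\mathrm{marked}}\ge g+2\ell$ is equivalent to $(g-g_1)+(c_1-c_2)\ge\ell$, and hence to $t_{\mathrm{genus}}\le 3g$.

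The main obstacle is that this is exactly the reverse of the bound recalled above: Riemann--Hurwitz together with connectedness yield only $t_{\mathrm{genus}}\ge 3g$, that is $t_{\mathrm{marked}}\le g+2\ell$, so the lower bound cannot be squeezed out of these identities alone and must come from the rigidity of the fibre. To close the gap I would invoke the structure of \cite{troptev}: for generic $\overline T$ the active path is rigid, and each of the $g$ loops of $\overline\Gamma$ can only be realised by a single degree-$2$ loop on exactly three transpositions, which pins $g_1=g$ and fixes $\lambda$. Once the genus is built this efficiently, the $\ell$ additional sheets demanded by the raised degree $d=g+1+\ell$ must each be attached within the marked tree part by their own join, giving $c_1-c_2=\ell$ and the equality $t_{\mathrm{marked}}=g+2\ell$; in particular the claimed inequality holds.
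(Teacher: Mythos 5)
Your Riemann--Hurwitz bookkeeping is correct, and the reduction it yields --- that the claim $t_{\mathrm{marked}}\ge g+2\ell$ is equivalent to $t_{\mathrm{genus}}\le 3g$, i.e.\ to $(g-g_1)+(c_1-c_2)\ge\ell$ --- is a genuinely different and even clarifying reformulation that does not appear in the paper. But the proposal has a real gap exactly where you concede the identities run out. Your closing step asserts that each loop of $\overline{\Gamma}$ ``can only be realised by a single degree-$2$ loop on exactly three transpositions,'' invoking the rigidity results of \cite{troptev}. This is circular: in both \cite{troptev} and the present paper, the statement that every loop is formed with \emph{exactly} three transpositions is a \emph{consequence} of this claim. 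The logical order is: (i) the genus part needs at least $3g$ transpositions (three are necessary per loop); (ii) the marked tree part needs at least $g+2\ell$ (the claim, proved independently); (iii) since the total is $4g+2\ell$, both bounds are equalities. Only then does the paper say ``Knowing that any new loop is added with exactly three transpositions we use the proof in Section 4.3 in \cite{troptev} to rule out all loop fragments besides $U$ and $D$.'' The upper bound $t_{\mathrm{genus}}\le 3g$ that you need is therefore precisely as hard as the claim itself: a priori a cover in the fiber could spend four or more transpositions forming some loop while economizing on the marked side, and excluding this is the content to be proved, not an input one may cite. (A smaller inaccuracy: the bound $t_{\mathrm{genus}}\ge 3g$ is not a consequence of Riemann--Hurwitz plus connectedness, as your formula $t_{\mathrm{genus}}=d+P-2c_1+2g_1$ shows; it too comes from the structure of the fiber over $p$.)

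The missing ingredient is the genericity of the chosen point $p$, which your argument never uses --- and cannot use, since the identities you employ are insensitive to edge lengths, which is why they produce an equivalence rather than an inequality. The paper's proof is a direct metric argument on the marked side: the $g+2\ell$ lengths $L_i$ of $\overline{T}$ are much longer than every length of $\overline{\Gamma}$, so each marked point $3,\dots,n-2$ must stabilize to the active path from an edge on which it is the only mark (two marks on a common branch would leave a compact edge of $\Gamma$ of length comparable to some $L_i$ surviving stabilization, contradicting that $\Fs(\Gamma)=\overline{\Gamma}$ has only short edges); each such edge accounts for one transposition, giving $g+2\ell-1$, and one further transposition is forced so that the branch carrying marks $1$ and $2$ attaches to an edge of degree at least $2$, avoiding a relation between $x_{4g+2\ell}$ and $L_{g+2\ell}$. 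Any correct proof of the claim must use this length genericity somewhere; your reduction could still be salvaged as a clean reorganization, but only if supplemented by a length-based argument of this kind for $t_{\mathrm{genus}}\le 3g$ that does not presuppose the transposition budget.
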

\begin{proof}
    There are $g+2\ell$ lengths $L_i$'s, each of which is much longer than the lengths on $\overline{\Gamma}$. Therefore, all marked points from $3$ to $n-2=g+1+2\ell$ must be stabilizing to the active path from an edge that it is the only mark on. There are at least $g+2\ell-1$ transpositions used to form such edges. Finally, in order to not have a relation between the lengths $x_{4g+2\ell}$ and $L_{g+2\ell}$, the branch that the marks $1$ and $2$ lie on must attach to an edge of degree at least $2$. In total, at least $g+2\ell$ transpositions are needed for the marked tree part of $\Gamma \to T$.
\end{proof}

The total number of transpositions is $4g+2\ell$, and therefore the number of transpositions on $\gGamma \to \gT$ is exactly $3g$ and the number on the marked tree part of $\Gamma \to T$ is $g+2\ell$.

Knowing that any new loop is added with exactly three transpositions we use the proof in Section 4.3 in \cite{troptev} to rule out all loop fragments besides $U$ and $D$. That proof also gives that all covers have a marked tree part that looks like some number of cuts followed by all joins and ending with the rest of the cuts. But then all possible solutions to our problem must be of the form of those we have exhibited, and there can be no more solutions and we have shown Theorem \ref{thm:ellpositive}.

\section{Tropical Tevelev degrees $\Tev^\trop_{g,\ell}$ for $\ell<0$}
\label{sec:ell<0}
When $\ell$ is negative, both the degree and the number of marked points become less than what they are in the $\ell=0$ case. Due to the degree being lower, we can not form the genus section of the covers in all of the same ways. For example, a genus part of the cover, $\gGamma \to \gT$, from the $\ell=0$ case used all of the degree to form the genus, that can not be made when the degree decreases. This leads to fewer ways to form the genus section. We can count the number of preimages using the same grid as used in the $\ell=0$ case but there are rows removed corresponding to the $\gGamma \to \gT$ from the $\ell=0$ case that are not possible. In this section, we go through examples in low genera when $\ell=-1$, then construct the number of solutions, and finally rule out any other possible solutions.

\subsection{Examples in low genera and $\ell$ close to zero}
\subsubsection{$\ell=-1$:} 

Starting with $g=2$, in order to compute $\Tev_{2,-1}^\trop$, we must compute the degree of the map \[(\Fs\times\Ft):\Hur{2,2,3}^\trop \to \Mgn{2,3}^\trop \times \Mgn{0,3}^\trop.\]
Consider the point $p=(\overline{\Gamma},\overline{T})\in \Mgn{2,3}^\trop \times \Mgn{0,3}^\trop$ depicted in Figure \ref{fig:genim}. 

We start by constructing $\gGamma \to \gT$. The Riemann-Hurwitz formula states that there are 6 simple transpositions. There is one way to form a degree 2 cover that contains two disjoint loops. 

We then complete $\gGamma \to \gT$ to $\Gamma \to T$ by adding 3 marked points. Place marked point 3 at distance $y_5$ from the second loop, and place marked points 1 and 2 $y_6$ to the right on 3. There are no other ways to place the marked points so that $\Gamma \to T$ is a preimage of $p$. Therefore, we get one cover in $(\Fs\times \Ft)^{-1}(p)$ shown in Figure \ref{fig:l=-1,g=2}. We now compute the local degree of $(\Fs\times\Ft)$ at this inverse image. 

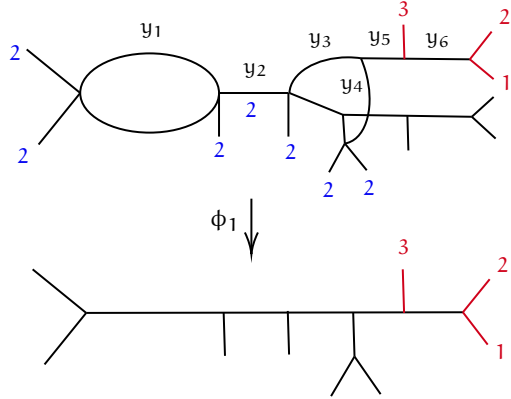
\begin{figure}[tb]
    \centering
    \tikzset{every picture/.style={line width=0.75pt}} 

\begin{tikzpicture}[x=0.75pt,y=0.75pt,yscale=-1,xscale=1]

\draw    (91,67) -- (64,45) ;
\draw    (91,67) -- (70,93) ;
\draw  [line width=0.75]  (91,67) .. controls (91,55.95) and (106.67,47) .. (126,47) .. controls (145.33,47) and (161,55.95) .. (161,67) .. controls (161,78.05) and (145.33,87) .. (126,87) .. controls (106.67,87) and (91,78.05) .. (91,67) -- cycle ;
\draw    (161,67) -- (196,67) ;
\draw    (161,67) -- (161,89) ;
\draw    (196,67) -- (196,89) ;
\draw  [draw opacity=0] (196.51,67) .. controls (196.93,57.01) and (209.97,49) .. (226,49) .. controls (228.26,49) and (230.45,49.16) .. (232.57,49.46) -- (226,67.5) -- cycle ; \draw   (196.51,67) .. controls (196.93,57.01) and (209.97,49) .. (226,49) .. controls (228.26,49) and (230.45,49.16) .. (232.57,49.46) ;  
\draw  [draw opacity=0] (232.57,49.46) .. controls (233.42,51.47) and (234.18,53.59) .. (234.82,55.8) .. controls (239.79,73.02) and (235.73,89.32) .. (225.75,92.2) .. controls (225.32,92.32) and (224.89,92.42) .. (224.45,92.49) -- (216.75,61.01) -- cycle ; \draw   (232.57,49.46) .. controls (233.42,51.47) and (234.18,53.59) .. (234.82,55.8) .. controls (239.79,73.02) and (235.73,89.32) .. (225.75,92.2) .. controls (225.32,92.32) and (224.89,92.42) .. (224.45,92.49) ;  
\draw    (196.51,67) -- (223.5,78) ;
\draw    (223.5,78) -- (224.45,92.49) ;
\draw    (224.45,92.49) -- (235.5,106) ;
\draw    (224.45,92.49) -- (216.5,107) ;
\draw    (223.5,78) -- (288.5,79) ;
\draw    (232.57,49.46) -- (287.5,50) ;
\draw    (256,78.5) -- (256.5,96) ;
\draw [color={rgb, 255:red, 208; green, 2; blue, 27 }  ,draw opacity=1 ]   (254,32.5) -- (254.5,50) ;
\draw [color={rgb, 255:red, 208; green, 2; blue, 27 }  ,draw opacity=1 ]   (298.5,36) -- (287.5,50) ;
\draw [color={rgb, 255:red, 208; green, 2; blue, 27 }  ,draw opacity=1 ]   (287.5,50) -- (299.5,60) ;
\draw    (299.5,69) -- (288.5,79) ;
\draw    (288.5,79) -- (300.5,90) ;
\draw    (94,178) -- (67,156) ;
\draw    (94,178) -- (73,204) ;
\draw    (94,178) -- (284,177.67) ;
\draw    (163.33,177.67) -- (164,199.67) ;
\draw    (195.67,177) -- (196.33,199) ;
\draw    (177.33,120.33) -- (177.33,146.33) ;
\draw [shift={(177.33,148.33)}, rotate = 270] [color={rgb, 255:red, 0; green, 0; blue, 0 }  ][line width=0.75]    (10.93,-3.29) .. controls (6.95,-1.4) and (3.31,-0.3) .. (0,0) .. controls (3.31,0.3) and (6.95,1.4) .. (10.93,3.29)   ;
\draw    (228.67,178) -- (229.33,200) ;
\draw    (229.33,200) -- (217.5,220) ;
\draw    (229.33,200) -- (242.5,219) ;
\draw [color={rgb, 255:red, 208; green, 2; blue, 27 }  ,draw opacity=1 ]   (253.67,156) -- (254.33,178) ;
\draw [color={rgb, 255:red, 208; green, 2; blue, 27 }  ,draw opacity=1 ]   (297.5,161) -- (284,177.67) ;
\draw [color={rgb, 255:red, 208; green, 2; blue, 27 }  ,draw opacity=1 ]   (284,177.67) -- (296.5,194) ;

\draw (54,41.4) node [anchor=north west][inner sep=0.75pt]  [font=\scriptsize,color={rgb, 255:red, 1; green, 1; blue, 240 }  ,opacity=1 ]  {$2$};
\draw (58,93.4) node [anchor=north west][inner sep=0.75pt]  [font=\scriptsize,color={rgb, 255:red, 1; green, 1; blue, 240 }  ,opacity=1 ]  {$2$};
\draw (157,90.4) node [anchor=north west][inner sep=0.75pt]  [font=\scriptsize,color={rgb, 255:red, 1; green, 1; blue, 240 }  ,opacity=1 ]  {$2$};
\draw (193,91.4) node [anchor=north west][inner sep=0.75pt]  [font=\scriptsize,color={rgb, 255:red, 1; green, 1; blue, 240 }  ,opacity=1 ]  {$2$};
\draw (212,110.4) node [anchor=north west][inner sep=0.75pt]  [font=\scriptsize,color={rgb, 255:red, 1; green, 1; blue, 240 }  ,opacity=1 ]  {$2$};
\draw (232.5,109.4) node [anchor=north west][inner sep=0.75pt]  [font=\scriptsize,color={rgb, 255:red, 1; green, 1; blue, 240 }  ,opacity=1 ]  {$2$};
\draw (173,70.4) node [anchor=north west][inner sep=0.75pt]  [font=\scriptsize,color={rgb, 255:red, 1; green, 1; blue, 240 }  ,opacity=1 ]  {$2$};
\draw (250,19.4) node [anchor=north west][inner sep=0.75pt]  [font=\scriptsize,color={rgb, 255:red, 208; green, 2; blue, 27 }  ,opacity=1 ]  {$3$};
\draw (301,23.4) node [anchor=north west][inner sep=0.75pt]  [font=\scriptsize,color={rgb, 255:red, 208; green, 2; blue, 27 }  ,opacity=1 ]  {$2$};
\draw (302,56.4) node [anchor=north west][inner sep=0.75pt]  [font=\scriptsize,color={rgb, 255:red, 208; green, 2; blue, 27 }  ,opacity=1 ]  {$1$};
\draw (120,30.4) node [anchor=north west][inner sep=0.75pt]  [font=\scriptsize]  {$y_{1}$};
\draw (172,50.4) node [anchor=north west][inner sep=0.75pt]  [font=\scriptsize]  {$y_{2}$};
\draw (205,35.4) node [anchor=north west][inner sep=0.75pt]  [font=\scriptsize]  {$y_{3}$};
\draw (221,58.4) node [anchor=north west][inner sep=0.75pt]  [font=\scriptsize]  {$y_{4}$};
\draw (235,34.4) node [anchor=north west][inner sep=0.75pt]  [font=\scriptsize]  {$y_{5}$};
\draw (264,35.4) node [anchor=north west][inner sep=0.75pt]  [font=\scriptsize]  {$y_{6}$};
\draw (250,139.4) node [anchor=north west][inner sep=0.75pt]  [font=\scriptsize,color={rgb, 255:red, 208; green, 2; blue, 27 }  ,opacity=1 ]  {$3$};
\draw (300,148.4) node [anchor=north west][inner sep=0.75pt]  [font=\scriptsize,color={rgb, 255:red, 208; green, 2; blue, 27 }  ,opacity=1 ]  {$2$};
\draw (299,192.4) node [anchor=north west][inner sep=0.75pt]  [font=\scriptsize,color={rgb, 255:red, 208; green, 2; blue, 27 }  ,opacity=1 ]  {$1$};
\draw (155,124.4) node [anchor=north west][inner sep=0.75pt]  [font=\footnotesize]  {$\phi _{1}$};

\end{tikzpicture}
    \caption{The one cover in $(\Fs\times \Ft)^{-1}(p)$ for $\ell=-1$ and $g=2$.}
    \label{fig:l=-1,g=2}
\end{figure}

To calculate the local Hurwitz number, every vertex in $\Gamma_1$ is either trivalent with degree one edges in all directions or two edges of degree 2 in different directions and 2 edges of degree 1 in the same direction. Both of these types of vertices have local Hurwitz number equal to 1, therefore the product of all local Hurwitz numbers is 1. 

The cover has an automorphism factor of 2 corresponding to switching each pair of ends of branching type $2$ and their inverse images. Altogether, we have \begin{equation*}
    \frac{|\Aut(\overline{\Gamma_1})|}{|\Aut(\phi_1)|} = \frac{1}{4}.
\end{equation*}

To calculate the dilation factor, we set up the following matrix representing the $x_i$'s in terms of the $y_k$'s: 
\[M_1=\begin{blockarray}{ccccccc}
   &y_1 & y_2 & y_3 & y_4 & y_5 & y_6 \\
    \begin{block}{c[cccccc]}
      x_1 & 2 & 0 & 0 & 0 & 0 & 0\\
      x_2 & 0 & 1 & 0 & 0 & 0 & 0\\
      x_3 & 0 & 0 & 1 & 0 & 0 & 0\\
      x_4 & 0 & 0 & 1 & 2 & 0 & 0\\
      x_5 & 0 & 0 & 0 & 0 & 1 & 0\\
      x_6 & 0 & 0 & 0 & 0 & 0 & 1\\
    \end{block}
  \end{blockarray}
\]
We see that $|\det M_1 |=4$.
All together the multiplicity of each cover in $(\Fs\times\Ft)^{-1}(p)$ is $1\cdot \frac{1}{4}\cdot 4=1$. Since we have one inverse image with multiplicity one,  we obtain $\Tev^\trop_{2,-1} = 1$.

To interpret $\Tev^\trop_{2,-1}$ in another way, we compare the grid of solutions for $\ell = -1$ and $g=2$ with the grid of solutions for $\ell=0$ and $g=2$ shown in Figure \ref{fig:l=0g=2}. Although the marked point configurations and total degrees are different, we obtain organizations according to the same active edge degrees. When $\ell=-1$, we lose the top row of the grid corresponding to having an active edge of degree 3. The cover $\phi_{2,2} $ in Figure \ref{fig:l=0g=2} with the joined end removed and no marked points is the same as $\phi_1$ in Figure \ref{fig:l=-1,g=2} with no marked points as shown in Figure \ref{fig:removejoinedend}. 

\begin{figure}[tb]
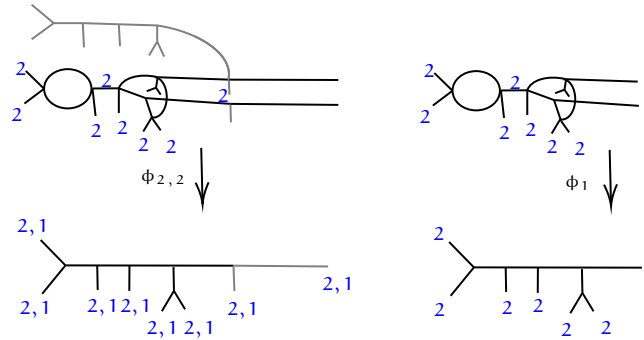

    \centering
    \include{Figures/removejoinedend}
    \caption{The cover $\phi_{2,2} $ with the joined end removed and no marked points is the same as $\phi_1$ with no marked points.}
    \label{fig:removejoinedend}
\end{figure}

Moving to genus 3, to compute $\Tev_{3,-1}^\trop$, we must compute the degree of the map \[(\Fs\times\Ft):\Hur{3,3,4}^\trop \to \Mgn{3,4}^\trop \times \Mgn{0,4}^\trop.\]

\begin{figure}[tb]
    \centering
     \resizebox{\textwidth}{!}{\input{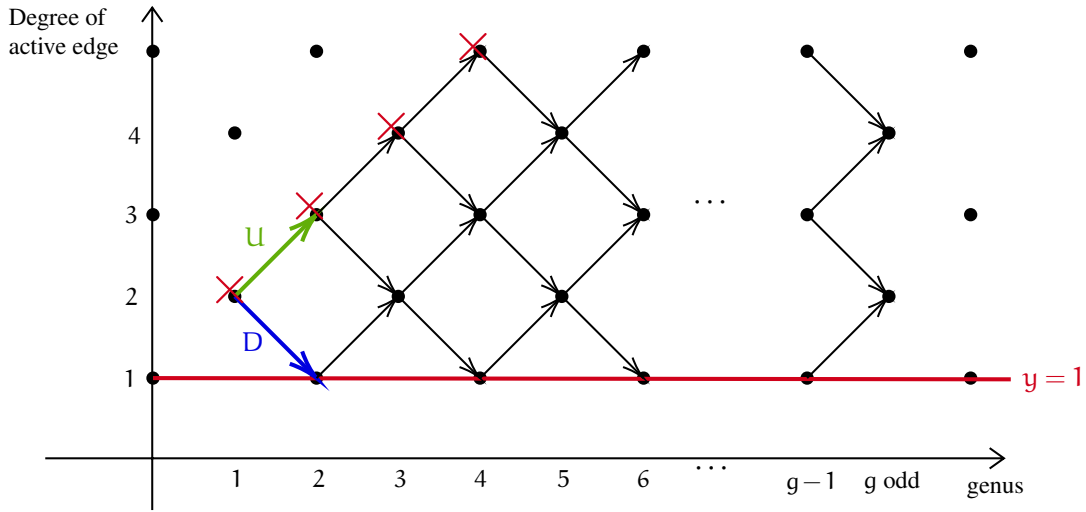}}
    \caption{A graph showing for genus $g$ which degrees are possible for the active edge by adding $U$ and $D$. For $\ell=-1$, paths ending at vertices along the diagonal are not possible because $d=g$.}
    \label{fig:l=-1genuspaths}
\end{figure}
We start by constructing $\gGamma \to \gT$. Looking at Figure \ref{fig:l=-1genuspaths}, for $g=3$, there are 3 possible paths, but one of those paths ends at a vertex with active edge degree 4. Since $\ell=-1$, the degree of the cover is $3$, therefore, paths to that vertex are not possible. We conclude that there are two possible ways to construct $\gGamma \to \gT$, one by adding $U$ then $D$ and one by doing the opposite. Recall $U$ and $D$ are introduced in Figure \ref{fig:2genusoptions}. Note, both of these options have an active edge of degree 2. 

To complete the cover, we add $4$ marked points. The task of adding $4$ marked points to an active edge of degree 2 was done in detail in \cite{troptev}, there are 2 ways of doing so. Putting together the ways to form $\gGamma \to \gT$ with the ways to add marked points, we have $4$ total covers in the preimage of $p$, giving that $\Tev_{3,-1}^\trop=4$. Comparing the genus 3 grid of solutions when $\ell=-1$ in Figure \ref{fig:l=-1,g=3} to the grid of solutions when $\ell=0$ shown in Figure \ref{fig:g=3grid}, notice that the prior is missing the top row of solutions. 

\begin{figure}[tb]
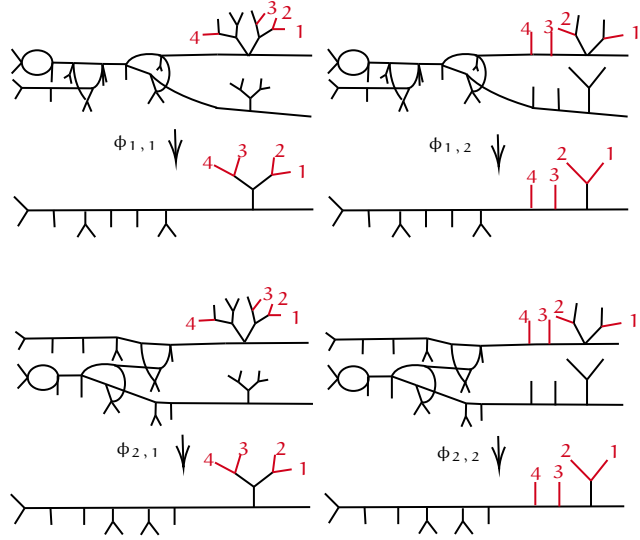

    \centering
    \include{Figures/l=-1,g=3}
    \caption{The four covers in $(\Fs \times \Ft)^{-1}(p)$ when $\ell=-1$ and $g=3$.}
    \label{fig:l=-1,g=3}
\end{figure}
\newpage

\subsection{Construction of solutions}
\subsubsection*{$\ell=-1$:}
We start constructing the number of solutions for general $g$ for the case when $\ell=-1$. We construct the genus part in a similar fashion to the construction in the previous section, but there are fewer options for any given genus $g$ due to the condition that $d=g+1+\ell$. When $\ell=-1$, $d=g$, which restricts paths from ending at vertices on the diagonal, i.e. there is at least one $D$ added, as shown in Figure \ref{fig:l=-1genuspaths}. In terms of the grid of solutions built in the $\ell=0$ case, we lose the rows of solutions corresponding to an active edge of degree $g+1$. This number of rows is equal to the number of paths ending at a vertex on the diagonal, which is equal to ${g\choose0}=1$. 

The number of marked points $n=g+3-2=g+1$, is the same number of marked points as in the $\ell=0$ case when the genus is $2$ less. The marked fragments presented in Section \ref{sec:prooftroptev} for $n$ marked points can be placed on the same active edge degrees as we have remaining in our grid of solutions. 

Altogether, solutions can be thought of as the number of solutions when $\ell=0$ minus the solutions corresponding to an active edge of degree $g+1$. The number of columns that are filled in these rows is equal to the number of marked fragments, $g+1$. The total number of solutions constructed is equal to $2^g-(g+1){g\choose0}$.

\subsubsection*{$\ell<0$: The genus part}
We again construct the genus part in a similar fashion to the construction in the $\ell=0$ case, but there are fewer options for any given genus $g$. When $\ell=-i$, $d=g+1-i$, there must be at least $i$ $D$'s added, giving an active edge of degree less than or equal to $g+1-2i$ in addition to $i$ free ends coming from each of the $i$ $D$'s added. Therefore, $\ell $ being negative does not allow paths to end at vertices less than or equal to $\ell$ steps down from the diagonal. In terms of the grid of solutions, we lose the rows of solutions corresponding to active edge degrees greater than $g+1-2i$. The number of rows of the grid that come from a given active edge degree is equal to the number of paths to the corresponding vertex, which is equal to $({g\choose{i}}-{g\choose{i-1}})$ for active edge degree $g+1-2i$.

\subsubsection*{The marked tree part}
For genus $g$, there are $n-2=g+1+2\ell$ fragments coming from the $g+2\ell$ case of the $\ell=0$ case. These are placed on the marked part of the cover in the same way. Since we are constructing the covers from the same pieces as used in Section \ref{sec:prooftroptev}, no work is needed to show the multiplicity of every cover constructed equals 1. 

When $\ell=0$, we have shown that every type of fragment can be placed on the cover with the highest possible active degree, therefore not having those covers, takes away $(g+1) {g\choose{0}}$ covers from $2^g$. For every vertex down that corresponds to unattainable paths, there are two fewer marked fragments that can be placed. Each vertex corresponds to $(g+1-2i)({g\choose{i}}-{g\choose{i-1}})$ covers where $i$ is the number of vertices down because there are $(g+1-2i)$ marked fragment options and $({g\choose{i}}-{g\choose{i-1}})$ genus part options. 

For positive integer $g$ and negative integer $\ell$, \[\Tev^\trop_{g,\ell}\geq 2^g - \sum_{i=0}^{-\ell-1}\bigg(g-2i+1\bigg)\bigg({g\choose i}-{g \choose i-1}\bigg).\]

\subsection{Excluding further solutions}
In this section, we exclude any further cover $\Gamma \to T$ from mapping to  the chosen point $p= (\overline{\Gamma},\overline{T})\in \Mgn{g,n}^\trop\times \Mgn{0,n}^\trop$. 

For $\ell$ negative, there are $n=g+3+2\ell$ markings. This is the same number of marked points as the $g+2\ell$ case of $\ell=0$. These marked points can be placed in marked fragments as shown in Figure \ref{fig:markfrag}. The same argument as in Section \ref{sec:prooftroptev} excludes all other marked fragments.

Recall from Section \ref{sec:ellposexclude} that the total number of transpositions is $4g+2\ell$, and therefore the number of transpositions on $\gGamma \to \gT$ is exactly $3g$ and the number on the marked tree part of $\Gamma \to T$ is $g+2\ell$.

Knowing that any new loop is added with exactly three transpositions we use the proof in Section \ref{sec:prooftroptev} to rule out all loop fragments besides $U$ and $D$. That proof also gives that all joins happen in a row. Then all possible solutions to our problem must be of the form of those we have exhibited, and there can be no more solutions and thus we have shown Theorem \ref{thm:ellnegative}.

\section{Tropical generalized Tevelev degrees}
In this section we exhibit a combinatorial computation for tropical generalized Tevelev degrees. We start by restricting $\mu_i$ to be a vector of all $1$s for $i=1,\dots,k$. We further specialize to the case $\ell=0$ before generalizing to all integer values of $\ell$. Finally, we generalize $\mu_i$ to arbitrary ramification profiles for $i=1,\dots,k$. We assume $|\mu_i|\geq|\mu_j|$ for all $i\leq j$.

 \subsection{$\ell=0$ case}
 We choose a point $p = (\overline{\Gamma}, \overline{T})$ in the interior of a maximal cone of the refinement of $\Mgn{g,n}^\trop\times \Mgn{0,k}^\trop$ induced by the map $\Fs\times \Ft$. The pair of tropical curves parameterized by $p$ are depicted in Figure \ref{fig:newgenimage}.

 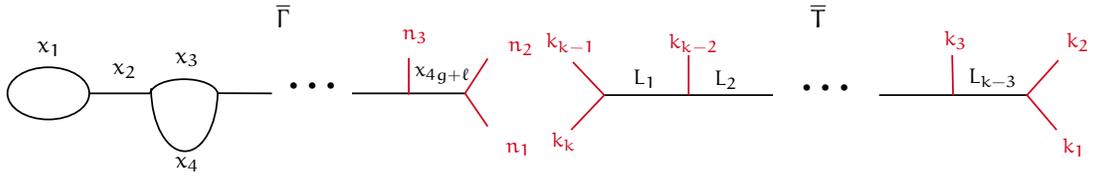
\begin{figure}[tb]
    \centering
     \resizebox{\textwidth}{!}{\tikzset{every picture/.style={line width=0.75pt}} 

\begin{tikzpicture}[x=0.75pt,y=0.75pt,yscale=-1,xscale=1]

\draw   (26,66) .. controls (26,57.72) and (36.52,51) .. (49.5,51) .. controls (62.48,51) and (73,57.72) .. (73,66) .. controls (73,74.28) and (62.48,81) .. (49.5,81) .. controls (36.52,81) and (26,74.28) .. (26,66) -- cycle ;
\draw    (73,66) -- (108,66) ;
\draw  [draw opacity=0] (108,66) .. controls (108.94,61.43) and (117.27,57.85) .. (127.41,57.85) .. controls (137.48,57.84) and (145.78,61.36) .. (146.81,65.89) -- (127.41,66.85) -- cycle ; \draw   (108,66) .. controls (108.94,61.43) and (117.27,57.85) .. (127.41,57.85) .. controls (137.48,57.84) and (145.78,61.36) .. (146.81,65.89) ;  
\draw  [draw opacity=0] (146.81,65.89) .. controls (146.36,84.42) and (138.25,99.16) .. (128.12,99.28) .. controls (117.54,99.42) and (108.77,83.59) .. (108.53,63.94) .. controls (108.53,63.65) and (108.52,63.37) .. (108.52,63.09) -- (127.67,63.7) -- cycle ; \draw   (146.81,65.89) .. controls (146.36,84.42) and (138.25,99.16) .. (128.12,99.28) .. controls (117.54,99.42) and (108.77,83.59) .. (108.53,63.94) .. controls (108.53,63.65) and (108.52,63.37) .. (108.52,63.09) ;  
\draw    (146.81,65.89) -- (178,66) ;
\draw    (224,66) -- (289,66) ;
\draw [color={rgb, 255:red, 208; green, 2; blue, 27 }  ,draw opacity=1 ]   (256.67,66.67) -- (256.67,45.67) ;
\draw [color={rgb, 255:red, 208; green, 2; blue, 27 }  ,draw opacity=1 ]   (289,66) -- (302,46) ;
\draw [color={rgb, 255:red, 208; green, 2; blue, 27 }  ,draw opacity=1 ]   (289,66) -- (302,85) ;
\draw    (369,67) -- (466,67) ;
\draw [color={rgb, 255:red, 208; green, 2; blue, 27 }  ,draw opacity=1 ]   (369,67) -- (351,48) ;
\draw [color={rgb, 255:red, 208; green, 2; blue, 27 }  ,draw opacity=1 ]   (369,67) -- (350,87) ;
\draw [color={rgb, 255:red, 208; green, 2; blue, 27 }  ,draw opacity=1 ]   (417.5,67) -- (417.5,44) ;
\draw    (527,67) -- (612,67) ;
\draw [color={rgb, 255:red, 208; green, 2; blue, 27 }  ,draw opacity=1 ]   (569.5,67) -- (569,44) ;
\draw [color={rgb, 255:red, 208; green, 2; blue, 27 }  ,draw opacity=1 ]   (612,67) -- (630,47) ;
\draw [color={rgb, 255:red, 208; green, 2; blue, 27 }  ,draw opacity=1 ]   (612,67) -- (629,87) ;

\draw (41.67,34.73) node [anchor=north west][inner sep=0.75pt]  [font=\small]  {$x_{1}$};
\draw (85,47.4) node [anchor=north west][inner sep=0.75pt]  [font=\small]  {$x_{2}$};
\draw (121,40.73) node [anchor=north west][inner sep=0.75pt]  [font=\small]  {$x_{3}$};
\draw (121,101.07) node [anchor=north west][inner sep=0.75pt]  [font=\small]  {$x_{4}$};
\draw (304.09,88.34) node [anchor=north west][inner sep=0.75pt]  [font=\footnotesize,color={rgb, 255:red, 208; green, 2; blue, 27 }  ,opacity=1 ,rotate=-358.45]  {$ \begin{array}{l}
n_{1}\\
\end{array}$};
\draw (305.63,30.16) node [anchor=north west][inner sep=0.75pt]  [font=\footnotesize,color={rgb, 255:red, 208; green, 2; blue, 27 }  ,opacity=1 ,rotate=-359.63]  {$ \begin{array}{l}
n_{2}\\
\end{array}$};
\draw (251.68,27.69) node [anchor=north west][inner sep=0.75pt]  [font=\footnotesize,color={rgb, 255:red, 208; green, 2; blue, 27 }  ,opacity=1 ,rotate=-0.17]  {$n_{3}$};
\draw (631,90.4) node [anchor=north west][inner sep=0.75pt]  [font=\footnotesize,color={rgb, 255:red, 208; green, 2; blue, 27 }  ,opacity=1 ]  {$k_{1}$};
\draw (633.33,29.73) node [anchor=north west][inner sep=0.75pt]  [font=\footnotesize,color={rgb, 255:red, 208; green, 2; blue, 27 }  ,opacity=1 ]  {$k_{2}$};
\draw (563,28.73) node [anchor=north west][inner sep=0.75pt]  [font=\footnotesize,color={rgb, 255:red, 208; green, 2; blue, 27 }  ,opacity=1 ]  {$k_{3}$};
\draw (337.28,86.79) node [anchor=north west][inner sep=0.75pt]  [font=\footnotesize,color={rgb, 255:red, 208; green, 2; blue, 27 }  ,opacity=1 ,rotate=-359.4]  {$k_{k}$};
\draw (334.08,29.85) node [anchor=north west][inner sep=0.75pt]  [font=\footnotesize,color={rgb, 255:red, 208; green, 2; blue, 27 }  ,opacity=1 ,rotate=-0.85]  {$k_{k}{}_{-1}$};
\draw (404.67,30.05) node [anchor=north west][inner sep=0.75pt]  [font=\footnotesize,color={rgb, 255:red, 208; green, 2; blue, 27 }  ,opacity=1 ,rotate=-0.08]  {$k_{k}{}_{-2}$};
\draw (384.33,50.73) node [anchor=north west][inner sep=0.75pt]  [font=\footnotesize]  {$L_{1}$};
\draw (431,51.4) node [anchor=north west][inner sep=0.75pt]  [font=\footnotesize]  {$L_{2}$};
\draw (577,50.4) node [anchor=north west][inner sep=0.75pt]  [font=\footnotesize]  {$L_{k-3}$};
\draw (185,58.4) node [anchor=north west][inner sep=0.75pt]  [font=\huge]  {$\dotsc $};
\draw (480,59.4) node [anchor=north west][inner sep=0.75pt]  [font=\huge]  {$\dotsc $};
\draw (258.67,49.07) node [anchor=north west][inner sep=0.75pt]  [font=\footnotesize]  {$x_{4g+\ell }$};
\draw (179,13.4) node [anchor=north west][inner sep=0.75pt]    {$\overline{\Gamma }$};
\draw (486,13.4) node [anchor=north west][inner sep=0.75pt]    {$\overline{T}$};

\end{tikzpicture}}
    \caption{The graphs $\overline{\Gamma}, \overline{T}$ defining the chosen point $p$ of $\Mgn{g,n}^\trop\times\Mgn{0,k}^\trop$ where the set of $n$ marked points, $\{n_1,\dots,n_n\}$, is $\{1_1,\dots,1_{r_1},(r_1+1)_1,\dots,(r_1+1)_{r_2},\dots,(r_1+r_2+\dots+r_{k-1}+1)_1,(r_1+r_2+\dots+r_{k-1}+1)_{r_k}\}$ and the set of $k$ marked points, $\{k_1,\dots,k_k\}$, is $\{1,r_1+1,r_1+r_2+1,\dots,r_1+\dots+r_{k-1}+1\}$. To simplify notation, we drop the subscript when $r_i=1$.}
    \label{fig:newgenimage}
\end{figure}
We start with going through a few examples when $g=2$ and then construct the number of preimages of $p$, that we count with multiplicities. Finally, we exclude the possiblity of other preimages.  

    \subsubsection*{Examples when $g=2$}

When $g=2$, $n=5$ marked points, we start by considering all possible $\mu_i$s. Given $5$ marked points and requiring $k\geq3$, there are $4$ options: \begin{itemize}
    \item $(1),(1),(1),(1),(1)$
    \item $(1,1),(1),(1),(1)$
    \item $(1,1),(1,1),(1)$
    \item $(1,1,1),(1),(1)$.
\end{itemize}
The first case is proven in \cite{troptev}, and will be referred to as the original case. Their paper details the construction of the covers in $((\Fs\times\Ft))^{-1}(p)$ for the original case, shown in Figure \ref{fig:l=0g=2}, each with multiplicity $1$.

    \begin{figure}[tb]
        \centering
     \resizebox{.8\textwidth}{!}{\input{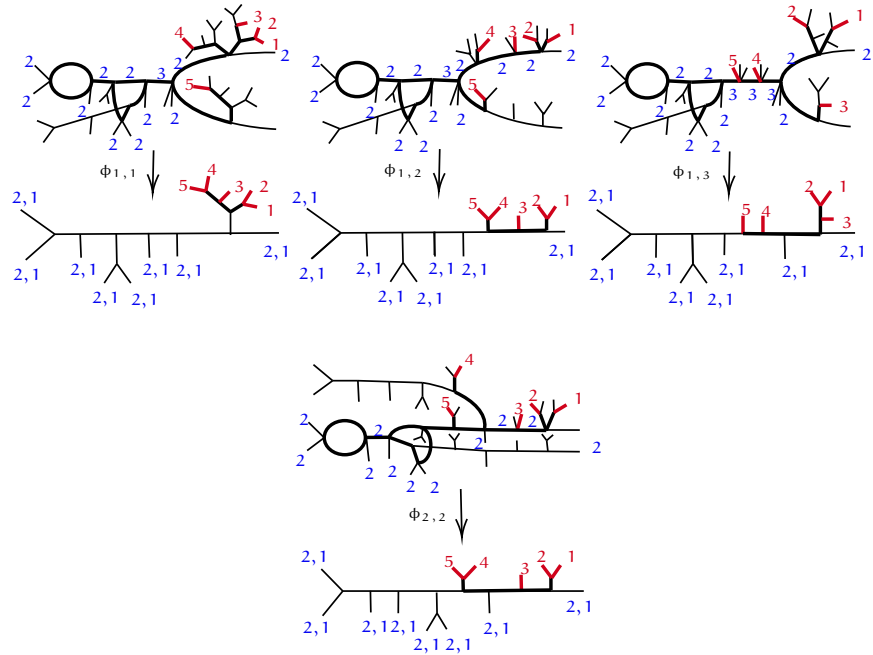}}
        \caption{The four covers in $((\Fs\times\Ft))^{-1}(p)$ for $\ell=0$, $g=2$, and $\mu_i$'s: $(1),(1),(1),(1),(1).$}
        \label{fig:l=0g=2}
    \end{figure}

Next, in order to compute $\Tev^\trop_{2,0,(1,1),(1),(1),(1)}$, we must compute the degree of the map 
\[(\Fs\times\Ft):\Hur{2,3,(1,1),(1),(1),(1)}^\trop \to \Mgn{2,5}^\trop \times \Mgn{0,4}^\trop.\]
Consider the point $p=(\overline{\Gamma},\overline{T})\in\Mgn{2,5}^\trop\times\Mgn{0,4}^\trop$ depicted in Figure \ref{fig:newgenimage}. We start by constructing the genus part $\gGamma\to\gT$, which is constructed in the same way by starting with a degree $2$ loop and continuing by adding a $U$ or $D$ genus fragments.
We complete $\gGamma\to \gT$ to $\Gamma \to T$ by adding the marked points. Starting by placing the four marked points on the target, there are three ways to organize four marked points that allows for one long edge $L_1>>x_i$ for all $i$, where $L_1$ is the length on $\overline{T}$ and $x_i$ are lengths on $\overline{\Gamma}$. Figure \ref{fig:n=4treefail} shows the two possible ways to generalize the placement of marked points on a single tree to the case with more than one preimage. Due to the $1$ having two preimages, its preimage must be marked on both copies of the cover of the tree on the degree two edge. This causes the stabilization to not be to $\overline{\Gamma}$, because of the incorrect shape (shown on left of Figure) or the incorrect order of marked points (shown on right). The two remaining possibilities of marked fragments are placed on the covers $\gGamma\to\gT$ as shown in Figure 
\ref{fig:l=0g=2(1,1)}. Comparing to Figure \ref{fig:l=0g=2}, there is one less cover, corresponding to the inability to place the marked points on a single tree. The multiplicity of each cover can be computed in the same way as previous cases in this paper and can be shown to be $1$ for each cover. Therefore, $\Tev^\trop_{2,0,(1,1),(1),(1),(1)}=3$.
\begin{figure}[tb]
    \centering
     \resizebox{.7\textwidth}{!}{\input{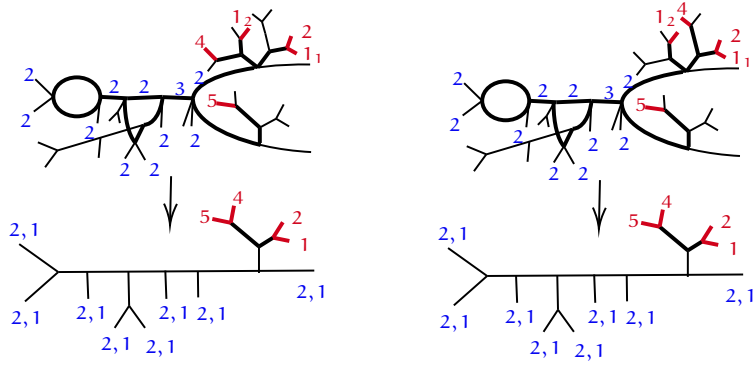}}
    \caption{Two covers \textbf{not} in $((\Fs\times\Ft))^{-1}(p)$ for $\ell=0$, $g=2$, when attempting to use a marked point fragment of a single tree.}
    \label{fig:n=4treefail}
\end{figure}

        \begin{figure}[tb]
        \centering
     \resizebox{.6\textwidth}{!}{\input{Figures/l=0g=2_1,1_}}
        \caption{The covers in $((\Fs\times\Ft))^{-1}(p)$ for $\ell=0$, $g=2$, and $\mu_i$'s: $(1,1),(1),(1),(1).$}
        \label{fig:l=0g=2(1,1)}
    \end{figure}

Continuing to increase the number of preimages of marked points that are marked, we look at $\Tev^\trop_{2,0,(1,1),(1,1),(1)}$. We must compute the degree of the map 
\[(\Fs\times\Ft):\Hur{2,3,(1,1),(1,1),(1)}^\trop \to \Mgn{2,5}^\trop \times \Mgn{0,3}^\trop.\]
Consider the point $p=(\overline{\Gamma},\overline{T})\in\Mgn{2,5}^\trop\times\Mgn{0,3}^\trop$. The genus part $\gGamma\to\gT$ is again constructed in the same way, leading to two possibilities. There are four possible marked point fragments for three markings. Given a tree with all markings on it, due to the number of preimages of $1$ and $2$, there will be five points marked covering the tree. In order to get the correct stabilization, all but two of these markings need to be on their own branch (cut, join or active edge). There are only 2 branches, so this is not possible. The same argument rules out a single marking with a tree containing two markings.
There is a single marked fragment option remaining for three marked points, all individually placed on target. This marked fragment can be placed on either genus part option, as shown in Figure \ref{fig:l=0g=2(1,1)(1,1)}. Comparing to Figure \ref{fig:l=0g=2}, there are two less covers, one corresponding to the fragment with one tree, and one corresponding to two of the constructed covers being equivalent. The multiplicity of each cover can be shown to be $1$ and we conclude that $\Tev^\trop_{2,0,(1,1),(1,1),(1)}=3$.

        \begin{figure}[tb]
        \centering
     \resizebox{.6\textwidth}{!}{\input{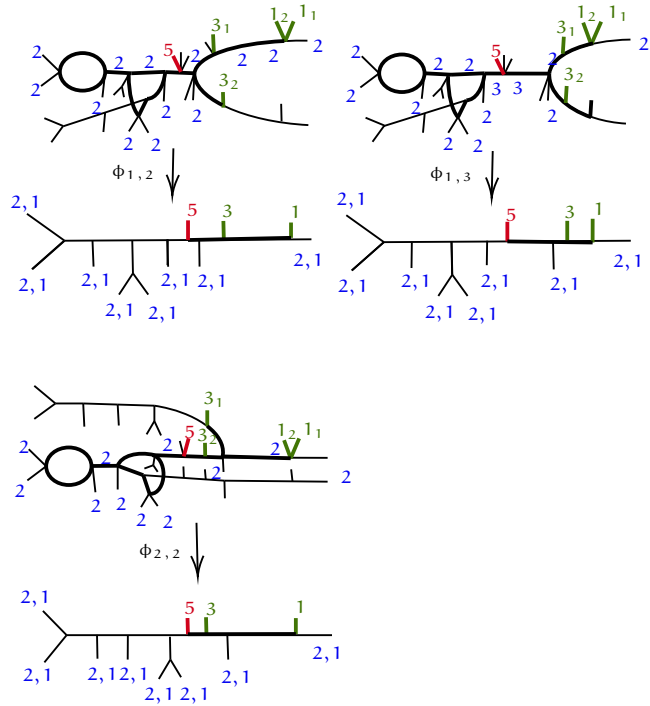}}
        \caption{The covers in $((\Fs\times\Ft))^{-1}(p)$ for $\ell=0$, $g=2$, and $\mu_i$'s: $(1,1),(1,1),(1).$ Note that the top two are exactly the same, so there are only 2 unique covers in the preimage.}
        \label{fig:l=0g=2(1,1)(1,1)}
    \end{figure}

Finally, we look at $\Tev^\trop_{2,0,(1,1,1),(1),(1)}$. We must compute the degree of the map 
\[(\Fs\times\Ft):\Hur{2,3,(1,1,1),(1),(1)}^\trop \to \Mgn{2,5}^\trop \times \Mgn{0,3}^\trop.\]
Consider the point $p=(\overline{\Gamma},\overline{T})\in\Mgn{2,5}^\trop\times\Mgn{0,3}^\trop$. Similar to the previous case, there are three marked points on the target, giving the same fragment option. Due to the first marked point having three preimages marked, this fragment can only be placed on the degree $3$ active edge cover, as shown in Figure \ref{fig:l=0g=2(1,1,1)}. Comparing to Figure \ref{fig:l=0g=2}, there are three less covers, as if the first two columns were removed. The multiplicity of this cover is $1$, so $\Tev^\trop_{2,0,(1,1,1),(1),(1)}=1$.

        \begin{figure}[tb]
        \centering
     \resizebox{.3\textwidth}{!}{\input{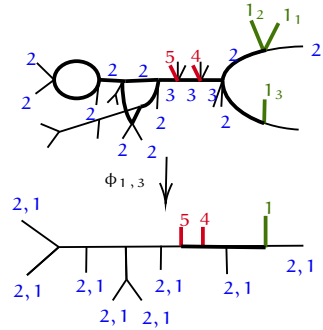}}
        \caption{The cover in $((\Fs\times\Ft))^{-1}(p)$ for $\ell=0$, $g=2$, and $\mu_i$'s: $(1,1,1),(1),(1).$}
        \label{fig:l=0g=2(1,1,1)}
    \end{figure}

    \subsubsection*{Construction}
In this section we make explicit and generalize the constructions from the examples in the previous section and construct the preimages of $p$ for any genus $g$, $\ell=0$ and any set $\mu_1,\dots,\mu_k$ where $\mu_i$ is a vector of all 1s. Note that changing $\mu_i$, does not impact the degree or the genus of the cover, therefore the genus part $\gGamma\to\gT$ is constructed in the same way as explained in section \ref{sec:prooftroptev}. This section focuses on the genus zero section containing the marked points.

\begin{figure}[tb]
    \centering
     \resizebox{.8\textwidth}{!}{\input{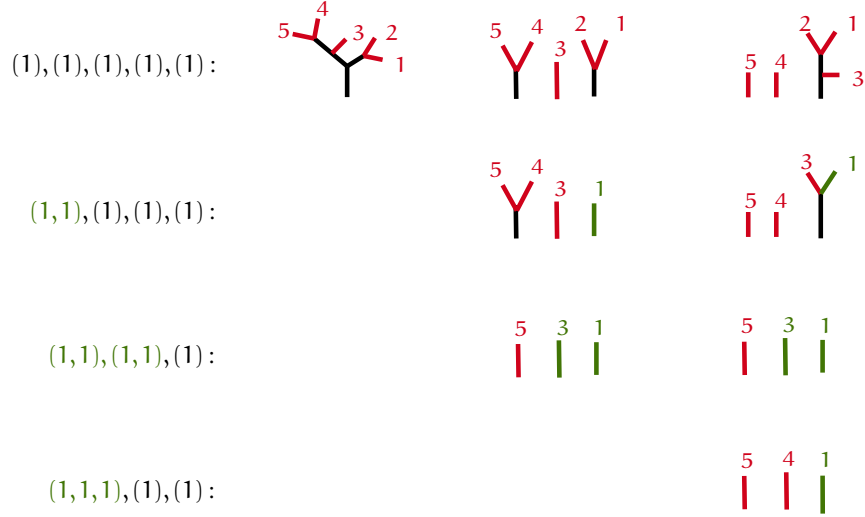}}
    \caption{The possible marked fragments when $\ell=0, g=2$ for different $\mu_1,\dots,\mu_k$. Observe that the second row of marked fragments are obtained by combining the ends labeled $1$ and $2$ in the top row; the third row is obtained by combining the ends labeled $1$ and $2$ and moving either $3$ or $4$ to combine with $4$ or respectively, $3$, in the middle; and the bottom row is obtained by combining the ends labeled $1,2$ and $3$. The marked points that come from a combination are colored green.}
    \label{fig:l=0g=2fragments}
\end{figure}

There are two possible operations to obtain new marked fragments from the original marked fragments introduced in \cite{troptev}, demonstrated with the marked fragments from the previous example in Figure \ref{fig:l=0g=2fragments}. 

\begin{enumerate}
    \item[\textbf{Action C}] (\emph{Combining marks on the same tree}).  
    Given $r_i$ marked points lying on a single tree of the fragment $F_j$, these may be combined provided that the tree contains at least $r_i$ original marked points. 
    
    \item[\textbf{Action M}] (\emph{Combining marks from different trees}).  
    Here $r_i$ marked points from distinct trees of $F_j$ are merged by moving marks from one or both outer trees and combining them with the central tree.  
    To move $a$ points from a given tree, that tree must initially contain at least $a+1$ marks so that, after the move, all three trees remain disjoint.
\end{enumerate}

\begin{lemma}
Let $F_j$ be a marked fragment with $2 \le j \le n-2$, and let $\mu_1,\dots,\mu_k$ be ordered partitions of the marked points.  
The fragment $F_j$ transforms into a valid fragment for a generalized Tevelev degree if and only if its markings may be produced by iteratively applying Actions~C and~M according to the following rules:
\begin{enumerate}
    \item for each index $i$, if $\sum_{h=1}^i |\mu_h| < j$, then $|\mu_i|$ marks combine on a single tree via Action~C;
    \item if $\sum_{h=1}^i |\mu_h| = j$, then a fragment of type $F_i$ is produced, with either Action~C or Action~M determining the placement of $\mu_{i+1}$;
    \item if $\sum_{h=1}^i |\mu_h| = j + m$, then $\left(\sum_{h=1}^i |\mu_h| - m\right)$ marks move from the right tree and $(m-1)$ marks from the left tree into the center via Action~M, producing a fragment of type $F_{i-1}$.
\end{enumerate}
Furthermore, when $i = k-1$, the final part $\mu_k$ must satisfy $|\mu_k| = n - j - 1$; otherwise no valid fragment can be formed.
\end{lemma}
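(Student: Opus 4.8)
The plan is to first translate the geometric notion of a \emph{valid} fragment into a purely combinatorial criterion on how the $n$ source marks are distributed. Set $S_i := \sum_{h=1}^i |\mu_h|$ (with $S_0=0$ and $S_k = n$), so that the generalized marks cut the source labels into the consecutive blocks $B_i = \{S_{i-1}+1,\dots,S_i\}$. Recalling from Section~\ref{sec:constructing-covers} that in the original fragment $F_j$ each mark occupies its own branch, and that $F_j$ splits its marks into a right region carrying the low labels $1,\dots,j$, a single central branch carrying label $j+1$, and a left region carrying the high labels $j+2,\dots,n$, I would first establish the following criterion as a preliminary claim: a fragment built from $F_j$ by combining marks is valid --- i.e. its completion $\Gamma\to T$ stabilizes to $p=(\overline{\Gamma},\overline{T})$ with every long edge $L_i$ realizable --- if and only if the preimages of each generalized mark lie on pairwise distinct branches, appear in the order dictated by $\overline{\Gamma}$, and the final block $B_k$ occupies precisely the left region. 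The distinctness requirement is exactly the failure displayed in Figure~\ref{fig:n=4treefail}, where two preimages of one mark forced onto a single tree over a degree-two edge destroy the stabilization; this is the step that gives the criterion its content.

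For the sufficiency (\emph{if}) direction I would induct on $i$ and show that processing $B_1,\dots,B_k$ by rules~(1)--(3) realizes exactly this criterion, the three rules corresponding to the three positions of $S_i$ relative to the threshold $j$. When $S_i<j$ the whole block $B_i$ lies in the right region, so Action~C merges it on one tree and its precondition is met because the right region still has $j-S_{i-1}\ge|\mu_i|$ available branches; when $S_i=j$ the block closes off the right region and produces the intermediate fragment $F_i$; when $S_i=j+m$ the block straddles the central branch, forcing Action~M to pull $S_i-m$ marks from the right and $m-1$ from the left into the center, whose ``at least $a+1$'' precondition is precisely what keeps the moved marks on disjoint branches, yielding $F_{i-1}$. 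The closing condition $|\mu_k|=n-j-1$ (equivalently $S_{k-1}=j+1$) guarantees that $B_k$ is exactly the left region $\{j+2,\dots,n\}$, so the last generalized mark is placed by a single Action~C on the left tree, completing a valid fragment.

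For the necessity (\emph{only if}) direction I would argue that any valid combined fragment must arise from this process. Since the generalized marks occupy consecutive blocks in the order fixed by $\overline{\Gamma}$ and, by the ordering hypothesis $|\mu_i|\ge|\mu_j|$ for $i\le j$, are placed largest-first, the region occupied by each $B_i$ is forced: a block avoiding the center can only be merged within a single region, reproducing rules~(1)--(2); a block meeting the center must be merged by the Action~M split of rule~(3); any other placement either repeats a branch (as in Figure~\ref{fig:n=4treefail}) or reverses the mark order, violating the criterion. Because the left region of $F_j$ has exactly $n-j-1$ branches, a valid fragment closes only when the last block fills it, which forces $|\mu_k|=n-j-1$.

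I expect the main obstacle to be the overshoot case, rule~(3), together with the verification that ``correct stabilization to $\overline{\Gamma}$'' is equivalent to the branch-disjointness-plus-order condition underlying the whole argument. Matching the counts $S_i-m$ and $m-1$ of marks moved from the right and left trees against the Action~M precondition, and checking the boundary behavior when $S_i$ lands exactly on $j$ or on $j+1$, is where the bookkeeping is most delicate; once this criterion is in place the remainder is a routine induction.
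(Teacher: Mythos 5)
Your proposal is correct and takes essentially the same approach as the paper's own proof: both reduce validity of a transformed fragment to bookkeeping of the partial sums $\sum_{h\leq i}|\mu_h|$ against the three regions of $F_j$ (right tree of low labels, central mark, left tree of high labels), using the branch-disjointness and mark-ordering requirement illustrated by Figure \ref{fig:n=4treefail} to justify the trichotomy in rules (1)--(3) and the fill condition $|\mu_k|=n-j-1$. If anything, your explicit iff-criterion and two-direction induction is more structured than the paper's argument, which asserts the same correspondence in narrative form by appealing to the combinatorics of the active-edge construction from the original case.
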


\begin{proof}
The procedure follows from the combinatorial structure of the active edge construction in \cite{troptev}.  
Action~C corresponds to combining marks that lie on the same tree of the fragment, while Action~M corresponds to moving marks from the left or right tree to the central tree so that all three trees remain disjoint.  
The case distinctions arise from the necessity that the number of combined marks matches the number of cuts and joins covering the left and right components of the fragment.

If the partial sum $\sum_{h=1}^i |\mu_h|$ is strictly less than $j$, the markings remain on the same tree and Action~C applies and one moves on to $\mu_{i+1}$.  
If the sum equals $j$, Action~C applies to a fragment of type $F_i$.  
When the sum exceeds $j$, marks are redistributed across the trees via Action~M to match the combinatorics of the covering curve, resulting in a fragment of type $F_{i-1}$.

The final condition on $|\mu_k|$ ensures that the number of marks on the rightmost tree matches the number of cuts covering that branch; otherwise the fragment cannot correspond to any cover in the generalized Tevelev degree.  
Thus all valid transformations of $F_j$ arise precisely from the stated recursive procedure.
\end{proof}

\vspace{0.5em}
\noindent\textbf{Example: The case of the fragment $F_0$.}\\
Starting with the fragment $F_0$ (denoted $F_1$ in \cite{troptev}) and a tuple of partitions $\mu_1,\dots,\mu_k$, we first apply Action~C to each $\mu_i$.  
This produces a new $F_0$ fragment whose central tree carries $k$ marked points.

We claim that this new fragment cannot appear on a cover mapping to the chosen point $p$.  
To see this, consider the simplest new instance: take $\mu_1=(1,1)$ and (1) for $\mu_2,\dots,\mu_k$.  
Following the procedure of \cite{troptev}, the base cover is obtained by a sequence of cuts along the active edge.  
We mark the last edge of the active path and attach the resulting $F_0$ fragment at that position.

On the covering curve, the marks are placed in descending order on the connected components of the inverse image of $F_0$, so that mark $k$ stabilizes at the vertex created by the first cut, and so forth.

The last connected component in the preimage attaches to an edge of degree two, and therefore consists of two copies of the fragment $F_2$.  
There are then two possibilities:

\begin{itemize}
    \item Place marks $\{3,1_2\}$ on one copy and $\{2,1_1\}$ on the other.  
    This fails to stabilize to a caterpillar curve of the correct genus.
    
    \item Place $\{1_2\}$ on one copy and $\{1_1,2,3\}$ on the other.  
    This stabilizes to the correct shape, but the order of the marked points is incorrect.
\end{itemize}

Thus, the fragment $F_0$ cannot appear in generalized Tevelev degrees.

\vspace{0.5em}
\noindent\textbf{Iterative procedure for general fragments $F_j$.}\\
Let $F_j$ be a fragment with $2 \le j \le n-2$, and let $\mu_1,\dots,\mu_k$ be ordered partitions.  
The following iterative procedure produces valid transformed fragments:

\begin{enumerate}
\item[$\boldsymbol{\mu_1:}$]
\begin{itemize}
    \item If $|\mu_1| < j$, combine $|\mu_1|$ points via Action~C and continue.
    \item If $|\mu_1| = j$, 
    \begin{itemize}
        \item if $|\mu_2| = 1$, form fragment $F_1$ and apply Action~C to $\mu_3,\dots,\mu_k$;
        \item if $|\mu_2| > 1$, apply Action~M to $\mu_2$ and Action~C to $\mu_3,\dots,\mu_k$ to form $F_1$.
    \end{itemize}
    \item If $|\mu_1| > j$, no valid fragment is possible.
\end{itemize}

\item[$\boldsymbol{\mu_2:}$]
\begin{itemize}
    \item If $|\mu_1|+|\mu_2| < j$, combine $|\mu_2|$ points via Action~C and continue.
    \item If $|\mu_1|+|\mu_2| = j$, form $F_2$ and apply Action~C or Action~M as above for $\mu_3,\dots,\mu_k$.
    \item If $|\mu_1|+|\mu_2| = j + m$, move $(|\mu_2|-m)$ points from the right tree and $(m-1)$ from the left tree to the center to form $F_1$, then apply Action~C to remaining marks.
\end{itemize}

\item[\vdots]

\item[$\boldsymbol{\mu_i:}$]
\begin{itemize}
    \item If $\sum_{h=1}^i |\mu_h| < j$, combine $|\mu_i|$ points via Action~C and continue.
    \item If $\sum_{h=1}^i |\mu_h| = j$, form $F_i$ and apply Action~C or Action~M to $\mu_{i+1},\dots,\mu_k$.
    \item If $\sum_{h=1}^i |\mu_h| = j + m$, move $(\sum_{h=1}^i |\mu_h| - m)$ points from the right tree and $(m-1)$ points from the left tree to the center to form $F_{i-1}$, then apply Action~C to remaining marks.
    \item If $i=k-1$, $|\mu_k|$ must equal $n-j-1$, otherwise no valid fragment is possible.
\end{itemize}
\end{enumerate}

This algorithm produces fragments of type $F_j$ with $1\leq j\leq k-2$. The marked fragment $F_1$ is new for this generalization of the computation. In the original case, when all marked points have $\mu_i=(1)$, the fragment $F_1$ was ruled out due to infinitely many covers using $F_1$ mapping to the same point $p$. Due to the fact that we require $\lvert\mu_i\rvert\geq \lvert \mu_j\rvert$ when $i<j$, $\lvert\mu_1\rvert>1$ when differing from previous cases already proven. Since at least two preimages of the first marked point on the target will be marked in the source, the first two stabilize together with the rest in a caterpillar tree to the genus part of the image. Therefore, the length between the first two marked points on the target is not lost when stabilizing, solving the problem of infinite preimages, $\Gamma\to T$. These ideas are demonstrated in Figure \ref{fig:F1}. 

\begin{figure}[tb]
    \centering
     \resizebox{\textwidth}{!}{\input{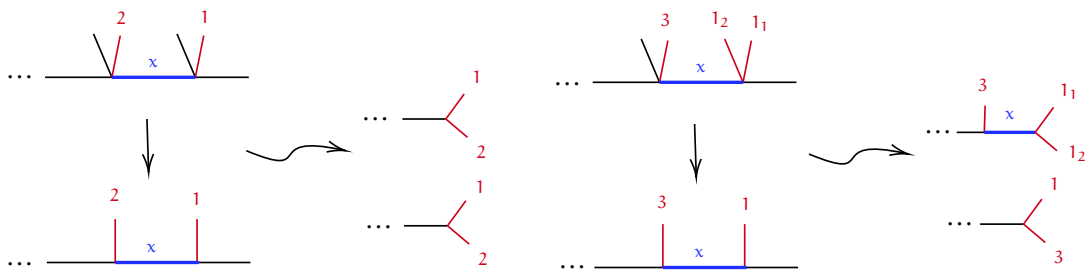}}
    \caption{On the left, there is an example of the rightmost end of a $\Gamma\to T$ when all marked points have one preimage marked, along with the rightmost ends of the pair of curves, $p$, this $\Gamma\to T$ maps to. Note that the length $x$, in blue, is lost when mapping to $p$, therefore changing the value of $x$ does not change the image. On the right, there is a similar example with the difference being that the first marked point has $2$ preimages marked. Observe that $x$ now appears in the image, fixing the length $x$ that maps to $p$.}
    \label{fig:F1}
\end{figure}

The fragments $F_j$ are placed on the genus zero sections of the covers $\Gamma \to T$ following the same process as how they are placed in previous cases. Since the fragments come from fragments of the original tropical Tevelev degree case and there are the same number of marked points on $\Gamma$, all marked points on $\Gamma$ are on the same cut or join they were on originally. When action M is preformed to obtain the new fragment, the cuts and joins corresponding to the moved marked points move with them to allow the cut or join to remain covering the point. Figure \ref{fig:cutsandjoinsmovingex} demonstrates a cut and a join moving.

\begin{figure}[tb]
    \centering
     \resizebox{.7\textwidth}{!}{\input{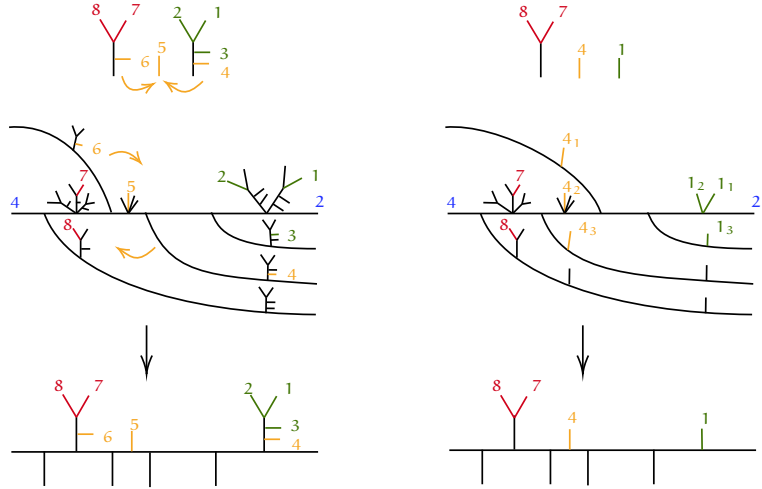}}
    \caption{The left half of this figure shows the genus zero part of one $\Gamma\to T$ from the case when all $\mu_i=(1)$. The unique marked fragment $F_4$ that can be placed on it is listed above. The right side of the figure shows how the marked fragment and the cover change when the $\mu_i$ become $(1,1,1),(1,1,1),(1),(1)$. Observe that $1,2,$ and $3$, all in green, become $1_1,1_2$ and $1_3$. Similarly, $4,5$ and $6$, colored in orange, become $4_1,4_2$ and $4_3$ with the join, respectively cut, move with the marked point 6, respectively 4.}
    \label{fig:cutsandjoinsmovingex}
\end{figure}

The examples in the previous section illustrate that distinct constructions of 
$\Gamma \to T$ may nevertheless produce equivalent covers. Such coincidences 
arise exactly when two marked fragments become identical and the associated 
sequence of cuts and joins along the active edge agrees. In the join–free case 
($i=0$), this forces the cover to be unique. For $i>0$, the situation is more 
subtle, and the following lemma describes precisely when equivalence occurs. 

\begin{lemma}\label{coversbecomesame}
Let $F_j$ and $F_{j+1}$ be consecutive marked fragments with $i>0$ joins along 
the active edge. Then $F_j$ and $F_{j+1}$ produce equivalent covers 
$\Gamma \to T$ precisely when the numbers of cuts moved from the left and 
right trees satisfy the following condition: for some integers $a,b \ge 1$,
\begin{itemize}
    \item from $F_j$: $b$ cuts and all $i$ joins move from the left tree to the center,
    and $a-1$ cuts move from the right tree, and
    \item from $F_{j+1}$: $b-1$ cuts and all $i$ joins move from the left tree
    to the center, and $a$ cuts move from the right tree.
\end{itemize}
In this case, both constructions yield a cover in which
\[
\begin{aligned}
\text{(left tree)} &\qquad d - j - i - b \ \text{cuts},\\
\text{(middle mark)} &\qquad a + b - 1 \ \text{cuts and } i \text{ joins},\\
\text{(right tree)} &\qquad j - a - 1 \ \text{cuts}.
\end{aligned}
\]
Varying $a$ and $b$ produces exactly $a+b$ equivalent marked fragments.
\end{lemma}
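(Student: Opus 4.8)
The plan is to reduce the claimed equivalence to an identity between combinatorial invariants, and then to reduce the counting statement to a range count. First I would isolate the data that determines a cover $\Gamma\to T$ in the fiber over $p$ up to isomorphism. By the analysis recalled in Section~\ref{sec:prooftroptev} and the exclusion arguments, all joins occur consecutively along the active edge and the marked points stabilize to a fixed caterpillar in a prescribed order, so the only isomorphism-invariant datum attached to the distinguished (highest-ramification) marked point is the triple consisting of the number of cuts on the left tree, the number of cuts together with the block of $i$ joins on the central tree, and the number of cuts on the right tree. Consequently, to prove that two constructions yield equivalent covers it suffices to show that they produce the same such triple, since the ramification profiles and the ordering of the marked points are already pinned down by the choice of distinguished point and its $|\mu|$ marked preimages.

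Next I would compute the triples directly. Write $L_j$ and $R_j$ for the numbers of cuts lying respectively to the left and to the right of the distinguished mark in $F_j$ before any application of Action~M. By the definition of consecutive fragments, passing from $F_j$ to $F_{j+1}$ moves exactly one cut from the left tree to the right tree, so $L_{j+1}=L_j-1$ and $R_{j+1}=R_j+1$. Applying the prescribed moves and subtracting, both constructions yield
\begin{equation*}
\text{left}=d-j-i-b,\qquad \text{center}=a+b-1\ \text{cuts and } i\ \text{joins},\qquad \text{right}=j-a-1.
\end{equation*}
The two center totals agree because the single cut traded between the outer trees in the step $F_j\to F_{j+1}$ is exactly compensated by trading one unit between the left-moved and right-moved contributions ($b\rightsquigarrow b-1$ on the left, $a-1\rightsquigarrow a$ on the right). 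A bookkeeping check that the three cut counts sum to $d-i-2$ confirms admissibility, and the hypotheses $a,b\ge 1$ are precisely the conditions from Action~M guaranteeing that each donor tree retains enough marks to keep the three trees disjoint. This establishes the equivalence.

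Finally, I would obtain the count. Having shown that consecutive fragments collapse, I would iterate the pairwise statement: the fragments producing a fixed target triple are exactly those $F_{j'}$ admitting legal moves $(a',b')$ with $a'+b'=a+b$ reproducing that triple. Each such fragment corresponds to one admissible splitting of the $a+b-1$ central cuts into a left-moved part and a right-moved part subject to the positivity constraints, and these splittings form a consecutive range of length $a+b$; hence exactly $a+b$ marked fragments give the same cover.

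The main obstacle I anticipate is the admissibility bookkeeping at the two extremes of the block (the cases $a=1$ and $b=1$), where one must check that the Action~M hypotheses on the donor trees are not violated and that the range count is neither over- nor under-counted by one. Verifying that \emph{same triple} genuinely exhausts the isomorphism data of the cover, rather than being merely a necessary condition, is the other point requiring care; here I would lean on the fact that the genus part $\widehat{\Gamma}\to\widehat{T}$ and the number of transpositions on the marked tree part are rigid, so no hidden degrees of freedom survive stabilization.
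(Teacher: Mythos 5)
Your sufficiency computation (the prescribed moves produce identical cut/join distributions) and your count of the $a+b$ admissible splittings agree with the paper's argument. The gap is in the necessity half of the ``precisely when.'' You reduce everything to the triple (cuts on the left tree; cuts together with \emph{the block of $i$ joins} on the central tree; cuts on the right tree), asserting this is the full isomorphism datum of a contributing cover. That formulation presupposes that in every cover under consideration all $i$ joins lie over the central mark. But this is not a property of the fiber: the exclusion analysis only shows that the joins occur in at most \emph{two} groups, one covering the leftmost tree and one covering the middle mark, and Action~M is free to move only some of the left tree's marks --- in particular only join-carried ones --- to the center. Covers in which the joins split between the left tree and the center therefore exist, your triple does not describe them, and your argument never rules out an equivalence between such a cover built from $F_j$ and some cover built from $F_{j+1}$. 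Since the lemma's conclusion includes ``all $i$ joins move,'' you have assumed a key part of what must be proved rather than proved it.

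The paper closes exactly this hole with an ordering-plus-counting argument that you would need to add. Along the active path the left tree is covered by cuts followed by joins, so the joins are innermost (adjacent to the center); Action~M peels off innermost marks, so a cut can leave the left tree only after all $i$ joins have left. Hence if fewer than $i$ joins move from $F_j$, no cuts move, and the left tree retains $d-j-i$ cuts; matching this from $F_{j+1}$, whose left tree starts with only $d-j-i-1$ cuts, would require moving $-1$ cuts, which is impossible. This forces all $i$ joins plus at least one cut to move (so $b\ge 1$), and only then does your linear bookkeeping $b'=b-1$, $a'=a$ take over. Note this is also the correct source of the hypothesis $b\ge 1$: it comes from this necessity argument, not, as you suggest, from Action~M's requirement that the donor tree retain enough marks. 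With that constraint incorporated, your proposal becomes essentially the paper's proof.
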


\begin{proof}
The examples in the previous section show that two covers $\Gamma \to T$ can 
be equivalent precisely when (i) at least two marked fragments coincide and 
(ii) the sequence of cuts and joins along the active edge is identical. When 
$i=0$, the absence of joins forces the construction to be unique, so the 
resulting covers are always identical. We therefore assume $i>0$.

Marked fragments $F_j$ and $F_{j+1}$ differ by a single marked point moving 
from the right tree of $F_{j+1}$ to the center. Thus the two fragments can 
coincide only if the resulting distribution of cuts and joins along the active 
path is adjusted appropriately.

Suppose first that fewer than $i$ marked points move from the left tree of 
$F_j$ to the center. Then the joins split: the covering of the left tree 
acquires $d - j - i$ cuts and fewer than $i$ joins. Such a configuration 
cannot arise from $F_{j+1}$, since $F_{j+1}$ has one fewer cut covering the 
leftmost tree. To match the two constructions, a marked point must be moved 
from the right tree of $F_{j+1}$ to the left tree, shifting the corresponding 
cut. Thus all $i$ joins must move to the middle marked point, ensuring that at 
least one cut also moves so that the number of cuts covering the left tree is 
the same in both constructions.

Now consider the general situation. Starting from $F_j$, suppose that
\begin{itemize}
    \item $b$ cuts and all $i$ joins move from the left tree to the center, and
    \item $a-1$ cuts move from the right tree.
\end{itemize}
Starting instead from $F_{j+1}$, suppose that
\begin{itemize}
    \item $b-1$ cuts and all $i$ joins move from the left tree to the center, and
    \item $a$ cuts move from the right tree.
\end{itemize}

In both cases, a direct count shows that
\[
\begin{aligned}
\text{left tree:} &\quad d - j - i - b \ \text{cuts},\\
\text{middle mark:} &\quad a + b - 1 \ \text{cuts and } i \text{ joins},\\
\text{right tree:} &\quad j - a - 1 \ \text{cuts}.
\end{aligned}
\]
Because the resulting data agree, the two constructions yield the same cover 
$\Gamma \to T$. Conversely, any equivalence between covers arising from 
$F_j$ and $F_{j+1}$ must arise from such a redistribution of cuts and joins, 
since these are the only moves that preserve the total number of cuts on each 
subtree while matching the join structure.

Allowing $a$ and $b$ to vary produces exactly $a+b$ distinct ways in which 
$F_j$ and $F_{j+1}$ may yield the same cover.
\end{proof}

Lemma~6.2 describes the possible local configurations of cuts, joins, and markings along
the active edge. Taken in isolation, these conditions allow a wide range of combinatorial
types, including configurations with many transpositions carrying markings over the middle
point. However, once the three marked trees are fixed and ordered,
their interaction imposes additional constraints. The following lemma extracts one such constraint, giving a uniform
bound on the number of marked preimages over the middle marked point. This bound will play a key role in controlling the complexity of contributing combinatorial types in later sections.

\begin{lemma}\label{lemma:max3cutsandjoins}
Let $L_1$ be the number of cuts $(c_1)$ and joins $(j_1)$ that contains marked points covering the left-hand tree. Let $L_2$ be the number of cuts $(c_2)$ and joins $(j_2)$ that contains marked points covering the middle tree. Let $L_3$ be the number of cuts $(c_3)$ containing marked points covering the right tree. Recall that $d$ is the degree of the cover and let $d_0$ be the degree of the active edge at the left hand side of the genus zero part of the cover. 
Let $d,j_1,j_2,c_1,c_2,c_3 \in \mathbb{Z}_{\ge 0}$ satisfy:
\begin{enumerate}
  \item $j_1+j_2 \le \left\lfloor \dfrac{d-1}{2}\right\rfloor$,
  \item $d_0 := d-2(j_1+j_2) > 0$,
  \item $L_1 + L_2 + L_3 = n-4=d-2$, 
  \item $L_3 \ge L_2$ due to $|\mu_i|\geq|\mu_j|$ for all $i\leq j$,
  \item $L_1+1$ admits a decomposition into the magnitude of the ramification profiles $(a_i)$ of each marked point contained in the left tree,
  \[
  L_1 + 1 = a_1 + \cdots + a_k
  \]
  with
  \[
  k \le d-j_1-j_2
  \quad\text{and}\quad
  1 \le a_i \le L_2+1 \text{ for all } i.
  \] 
\end{enumerate}
Then
\[
L_2 \le 3.
\]
\end{lemma}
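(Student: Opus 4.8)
The plan is to argue by contradiction: assume $L_2 \ge 4$ and show that conditions (1)--(5) cannot hold simultaneously. I would first extract the two arithmetic consequences that do not yet involve the decomposition. Combining the balance identity (3) with the ordering inequality (4) gives $L_1 = (d-2) - L_2 - L_3 \le d - 2 - 2L_2$, so once $L_2$ is large the left tree is forced to carry few cuts and joins. In parallel, writing $s = j_1 + j_2$, condition (2) gives $d_0 = d - 2s > 0$, hence $d - s > s \ge 0$, which fixes the admissible range $1 \le k \le d-s$ for the number of parts appearing in (5), and condition (1) controls the parity of $d$ relative to $s$ through the floor. These preliminaries isolate the two quantities, $L_1$ and $k$, that the decomposition hypothesis links.

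The heart of the matter is condition (5). The parts $a_1,\dots,a_k$ are the magnitudes $|\mu_h|$ of the ramification profiles of the marked points placed on the left tree, and the cap $a_i \le L_2 + 1$ has a geometric meaning: a profile on the left can thread through at most one sheet more than the number $L_2$ of marked preimages sitting over the middle marked point. The plan is to feed this cap, together with the count bound $k \le d-s$ from (2), into the identity $L_1 + 1 = \sum_{i=1}^k a_i$, obtaining $L_1 + 1 \le (d-s)(L_2+1)$, and to play this against the upper bound $L_1 \le d - 2 - 2L_2$ derived above while tracking how the fixed total profile mass $\sum_h |\mu_h|$ is distributed across the three trees. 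The decisive structural input, which I would lift from the construction in \cite{troptev} and from the failure mode already isolated for the fragment $F_0$ and in Figure~\ref{fig:n=4treefail}, is that the middle marked point is flanked only by the left tree, the right tree, and the active edge; consequently only boundedly many cut/join strands can carry marked preimages over it without forcing the stabilized source curve out of caterpillar shape or scrambling the order of its marks.

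I would then close the argument by an extremal count, showing that $L_2 \ge 4$ forces one of the constraints in (5) to break: either the number of left marked points exceeds the degree budget, $k > d-s$, or some profile must exceed the cap, $a_i > L_2+1$, so that its preimages can no longer thread the middle region; the finitely many boundary configurations, determined by the parity of $d$ via the floor in (1), are checked by hand. The main obstacle I anticipate is precisely this translation step, namely justifying the \emph{constant} $3$ rather than merely a $d$-dependent bound: I expect the value $3$ to come from the threefold adjacency of the middle tree in the trivalent caterpillar, so that at most three independent marked strands can stabilize over the middle point. Once that packing constraint is stated cleanly and tied to condition (5), the surrounding inequalities are routine bookkeeping.
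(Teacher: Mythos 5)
Your two preliminary inequalities are exactly the ones the paper extracts: the budget bound $L_1 + 2L_2 \le d-2$ from (3) and (4), and the decomposition bound $L_1 + 1 \le (d-j_1-j_2)(L_2+1)$ from (5). But your plan stalls exactly where you flag it, and the stall is fatal: both are \emph{upper} bounds on $L_1$, so no amount of playing them against each other can produce a contradiction --- they are satisfied simultaneously whenever $L_1$ is small. In fact, conditions (1)--(5) as stated do not imply the conclusion at all: take $d=10$, $j_1=j_2=0$, $L_1=0$, $L_2=L_3=4$, with the decomposition $L_1+1=1$, $k=1$, $a_1=1$. All five conditions hold, yet $L_2=4$. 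So the ``extremal count'' you defer to the end (that $L_2\ge 4$ forces $k>d-j_1-j_2$ or some $a_i>L_2+1$) cannot be carried out, and the packing constraint you gesture at --- that the trivalent middle vertex admits only boundedly many marked strands --- is genuinely additional geometric input, not a consequence of (1)--(5); your proposal never formulates it precisely, let alone proves it. That is the gap.

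The comparison with the paper is instructive, though. The paper's proof claims to close the argument purely arithmetically: it ``combines'' the two bounds into $(d-j_1-j_2)(L_2+1)+2L_2 \le d-1$ and derives a contradiction with $L_2 \ge 4$. But that combination substitutes the decomposition bound for $L_1$ inside the budget inequality, which would be legitimate only if the decomposition gave a \emph{lower} bound $L_1 \ge (d-j_1-j_2)(L_2+1)-1$; it gives an upper bound. The counterexample above shows this step cannot be repaired from the stated hypotheses --- one would need, for instance, that the left tree carries the maximal allowed mass ($k = d-j_1-j_2$ with every $a_i = L_2+1$), or else the geometric adjacency constraint you anticipated, stated as an explicit hypothesis. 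So your instinct that the constant $3$ must come from the geometry of the trivalent target rather than from this bookkeeping was sound; but an obstacle named without being resolved is still an unproven step, and as written neither your proposal nor the paper's own argument establishes the lemma from conditions (1)--(5).
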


\begin{proof}
From the decomposition hypothesis we have
\[
L_1+1 \le (d-j_1-j_2)(L_2+1),
\]
and hence
\begin{equation}
\label{eq:L1bound}
L_1 \le (d-j_1-j_2)(L_2+1)-1.
\end{equation}

Next, we know
\[
L_1+L_2+L_3=d-2.
\]
Since $L_3\ge L_2$, this implies the budget inequality
\begin{equation}
\label{eq:budget}
L_1+2L_2 \le d-2.
\end{equation}
Substituting $d=d_0+2(j_1+j_2)$ into \eqref{eq:budget} gives
\begin{equation}
\label{eq:budget2}
L_1+2L_2 \le d_0+2(j_1+j_2)-2.
\end{equation}

Combining \eqref{eq:L1bound} and \eqref{eq:budget2} yields
\[
(d-j_1-j_2)(L_2+1)-1+2L_2
\le d_0+2(j_1+j_2)-2.
\]
Using $d_0=d-2(j_1+j_2)$, this simplifies to
\begin{equation}
\label{eq:keyineq}
(d-j_1-j_2)(L_2+1)+2L_2 \le d-1.
\end{equation}

Suppose, for contradiction, that $L_2\ge 4$. Then $L_2+1\ge 5$, and the
left-hand side of \eqref{eq:keyineq} is at least
\[
5(d-j_1-j_2)+8.
\]
Hence \eqref{eq:keyineq} implies
\[
5(d-j_1-j_2)+8 \le d-1,
\]
or equivalently,
\[
4d \le 3(j_1+j_2)-9.
\]

However, by assumption $j_1+j_2 \le \frac{d-1}{2}$, so
\[
3(j_1+j_2)-9 \le \tfrac{3}{2}(d-1)-9 < 2d,
\]
which contradicts the previous inequality. Therefore the assumption
$L_2\ge 4$ is impossible, and we conclude that
\[
L_2 \le 3.
\]
\end{proof}
We conclude that there is at most three cuts and joins that contain marked preimages of the middle tree marked point. Note that $a_i\leq4$ follows from above. This implies that the marked points contained in the left-hand tree each have at most four preimages marked. 

We now organize the covers constructed in a way that allows us to count them. Following the technique of \cite{troptev}, we put the covers $\Gamma \to T$ inside (but not filling) a rectangular array, where rows correspond to solutions with the same genus part, and the columns to covers with the same marked fragment type. The rows are ordered so that the degree of the active edge is non-increasing, and the columns are ordered by the index $j$ of the marked fragments $F_j$. We consider all $n-2$ columns in order to compare to grid in the original case when $k=n$. 

The genus part $\gGamma\to \gT$ of each cover corresponds to a word of length $g-1$ in the letters $U$ and $D$ with the condition that the degree of the active edge remains positive. Recall Lemma 4.3 from \cite{troptev}:
\begin{lemma} \label{lemma:deg active}
For $d = g+1 \geq 2$, $0\leq i\leq \lfloor\frac{d-1}{2}\rfloor$, denote by $A_{d, \geq d-2i}$ denote the number of paths described above with endpoint of coordinate $(d, y)$, $y\geq d-2i$. 
\begin{equation}
  A_{d,\geq d-2i}
  = {{g}\choose{i}}.
\end{equation}
\end{lemma}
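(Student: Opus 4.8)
The plan is to reduce the statement to a pure lattice--path count and then evaluate it by the reflection principle. Recall from the construction in Section~\ref{sec:prooftroptev} that a genus part is encoded by a word of length $g-1$ in the letters $U$ and $D$, where the forced initial degree-$2$ loop raises the active-edge degree from $1$ to $2$ and each subsequent letter raises ($U$) or lowers ($D$) the degree by one. Reading the active-edge degree as a height, a genus part is exactly a lattice path $\pi$ of length $g$ starting at height $1$ whose first step is an up-step, with steps $\pm 1$, and which never drops below height $1$; the endpoint height of $\pi$ is the active-edge degree $y$ recorded by the coordinate $(d,y)$. If $\pi$ uses $D$ down-steps then $y = 1 + (g-D) - D = g+1-2D = d-2D$, so the event $\{y \ge d-2i\}$ is precisely $\{\pi \text{ has at most } i \text{ down-steps}\}$. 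Thus $A_{d,\ge d-2i}$ counts length-$g$ paths from height $1$ that stay $\ge 1$ and have at most $i$ down-steps.

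First I would count, for each fixed number of down-steps $D$, the admissible paths. An unconstrained length-$g$ path from height $1$ with $D$ down-steps is a choice of the $D$ down-positions, so there are $\binom{g}{D}$ of them, all ending at height $d-2D$. By André's reflection across height $0$, the \emph{bad} paths---those that touch $0$---are in bijection with \emph{all} length-$g$ paths from height $-1$ to the same endpoint $d-2D$; such a path has $D-1$ down-steps, so there are $\binom{g}{D-1}$ bad paths. Hence the number of genus parts with active-edge degree \emph{exactly} $d-2D$ is the ballot number
\[
\binom{g}{D}-\binom{g}{D-1},
\]
with the convention $\binom{g}{-1}=0$. This is the same quantity $\binom{g}{i}-\binom{g}{i-1}$ that reappears in Theorems~\ref{thm:ellnegative} and~\ref{thm:higherram}, which serves as a useful consistency check.

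Summing these level counts over $D = 0,1,\dots,i$ then telescopes:
\[
A_{d,\ge d-2i}=\sum_{D=0}^{i}\left(\binom{g}{D}-\binom{g}{D-1}\right)=\binom{g}{i}-\binom{g}{-1}=\binom{g}{i},
\]
which is the claimed identity. Equivalently, one may avoid the per-level computation and apply the reflection once to the cumulative family $\{\text{at most } i \text{ downs}\}$, subtracting $\sum_{D=0}^{i}\binom{g}{D-1}=\sum_{D=0}^{i-1}\binom{g}{D}$ from $\sum_{D=0}^{i}\binom{g}{D}$ to obtain $\binom{g}{i}$ directly.

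I expect the only genuinely delicate point to be fixing the boundary data of the path model so that the reflection produces exactly $\binom{g}{D-1}$: one must verify that the admissibility constraint on genus parts is that the active-edge degree never falls below $1$, with the initial degree-$2$ loop forcing the first step to be an up-step, so that the forbidden line sits at height $0$ and the reflected paths start at height $-1$. Once this normalization is pinned down---most safely by checking it against the small cases $g=2,3$ already drawn in Figures~\ref{fig:g=3grid} and~\ref{fig:l=-1,g=3}---the reflection and the telescoping are entirely routine.
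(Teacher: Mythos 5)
Your proof is correct, and the normalization issue you flag at the end resolves in your favor. One important contextual point: this paper does not actually prove the statement — it is recalled verbatim from \cite{troptev} (where it is Lemma~4.3), and the surrounding text only uses it — so your reflection-principle argument is a genuine self-contained substitute for the citation rather than a reproduction of an internal proof. Your path model (length $g$ from height $1$, steps $\pm 1$, never touching height $0$, first step forced up) is the correct reading of ``paths described above'': it matches the coordinate convention $(d,y)$ in the statement (a path from $(1,1)$ to $(d,y)$ has $d-1=g$ steps) and the paper's stated constraint that ``the degree of the active edge remains positive.'' It is also the only normalization consistent with the paper's own data: for $g=3$ the paper counts three genus-part paths $UU$, $UD$, $DU$ (see the discussion of Figure~\ref{fig:l=-1genuspaths}, where $DU$ is explicitly allowed and drops the active degree to $1$), whereas placing the forbidden line at height $1$ — as the sentence ``$D$ can only be added if the degree of the active edge is greater than $2$'' would suggest — would yield $\binom{g-1}{i}$ instead of $\binom{g}{i}$, contradicting the lemma; that sentence must be read as ``at least $2$.'' Finally, your intermediate per-level count $\binom{g}{D}-\binom{g}{D-1}$ for paths ending at height exactly $d-2D$ is precisely the row count the paper invokes in Theorems~\ref{thm:ellnegative} and~\ref{thm:higherram}, so your derivation also supplies a proof of that refinement, which the paper likewise asserts without argument.
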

The original case when all $\mu_i$ are equal to $(1)$ is proven to be $2^g$. We now examine the other cases by comparing the grids. There are three ways that the grid changes:
\begin{enumerate}
    \item if $|\mu_1|>j$: a column is lost for each $j$ this holds for due to the lose of the corresponding marked fragment. 
    \begin{enumerate}
        \item When $|\mu_1|=2$, lose $F_0$ column, which has height ${{g}\choose{i}}$.
        \item When $|\mu_1|>2$, lose $F_0$ column and $F_2,\dots,F_{|\mu_1|-1}$ columns where the height of column $F_i$ is ${{g}\choose{i-1}}$.
    \end{enumerate}
    \item if $|\mu_k|>n-j-1$: a column is lost for each $j$ this holds for due to the lose of the corresponding marked fragment. Each column $F_j$ has height ${{g}\choose{g-j}}={{g}\choose{j}}$.
    \item if $2$ or more fragments become the same with equivalent covers. When this occurs, all but one copy of the covers with given active edge degree are removed. There are $a+b=|\mu_h|-1$ covers that are equivalent, each have ${{g}\choose{i}}-{{g}\choose{i-1}}$ unique ways to form the genus part that produces the given active degree.
\end{enumerate}
Combining all of these we have 
 \begin{equation}
    \Tev^\trop_{g,0,\mu_1,\dots,\mu_k}\geq2^g -\sum_{i=0}^{\lvert\mu_1\rvert-2}{g\choose i}-\sum_{i=0}^{\lvert\mu_k\rvert-2}{g\choose i}-\sum_{h=2}^{k-1}\sum_{i=0}^{\lvert\mu_h\rvert-2}\bigg(\lvert\mu_h\rvert-i-1\bigg)\bigg({g\choose i}-{g \choose i-1}\bigg).
    \end{equation}

 \subsubsection*{Exclusion}\label{sec:excludegentev}
In this section, we exclude any further cover $\Gamma \to T$ from mapping to  the chosen point $p= (\overline{\Gamma},\overline{T})\in \Mgn{g,n}^\trop\times \Mgn{0,k}^\trop$. The proof from \cite{troptev} rules out any other way to form a genus part of the cover with independent cycle lengths and any other marked fragment options. What remains is to slightly alter their proof to show that the joins occur in two groups.

Recall that every time a cut occurs, the degree of the active edge decreases by $1$, and conversely, every time a join occurs, the degree of the active edge increases by $1$. Since joins come from the left and cuts go to the right, in order for both to be covering the same tree of marked points the cut must be to the left of the join. There are $3$ trees in the target with markings on them. The rightmost tree must be covered by all cuts because any join, occurring to the right of the cuts, would leave an active path of degree greater than two. The other two trees can be covered by some number of cuts followed by some number of joins. All joins covering the leftmost tree must occur before any cuts covering the middle tree (which is a single mark) in order to stabilize to $p$. Therefore, joins occur in up to two groups. all possible solutions to our problem must be of the form of those we have exhibited, and there can be no more solutions.
Thus we have concluded the proof of Theorem \ref{thm:higherram} when $\ell=0$ and $\mu_i$ are vectors of all $1$s.
    
 \subsection{Varying $\ell$}
 
 In this section we generalize the results from the previous section to positive and negative integer values of $\ell$. Recall that tropical generalized Tevelev degrees are defined for the conditions $d=g+1+\ell$ and $n=g+3+2\ell$. We start this section by examining the impact that increasing $\ell$ has on the proof in the previous section. We then do the same looking at negative $\ell$ values to conclude the proof of \ref{thm:higherram} when $\mu_i$ are vectors of all 1s.
 
    \subsubsection*{Positive $\ell$}
    \vspace{0.5em}
    \noindent\textbf{Examples.} We start by examining examples when $\ell=1$ and $g=1$ with different ramification profiles $\mu_i$. Details are left out of these examples and we focus on observing differences from the $\ell=0$ case in the previous section. 

First, looking at $\mu_1=(1,1)$ and the rest of $\mu_i=(1)$, the preimages are shown in Figure \ref{fig:ell=1,g=1,(1,1)}. There are $2$ preimages, each with multiplicity equal to $1$, therefore $\Tev_{1,1,(1,1),(1),(1),(1),(1)}^\trop=2$, which agrees with the original case shown in Figure \ref{fig:l=1g=1}. A cover is not lost here because the original case did not have the marked fragment $F_0$. Observe in Figures \ref{fig:ell=1,g=1,(1,1)(1,1)} and \ref{fig:ell=1,g=1,(1,1)(1,1)(1,1)}, add more $\mu_i$s that are $(1,1)$ continues to not change the tropical generalized Tevelev degree. 

    \begin{figure}[tb]
        \centering
     \resizebox{.8\textwidth}{!}{\input{Figures/ell=1,g=1,_1,1_}}
        \caption{The covers in $((\Fs\times\Ft))^{-1}(p)$ for $\ell=1$, $g=1$, and $\mu_i$'s: $(1,1),(1),(1),(1),(1).$}
        \label{fig:ell=1,g=1,(1,1)}
    \end{figure}
    \begin{figure}[tb]
        \centering
     \resizebox{.8\textwidth}{!}{\input{Figures/ell=1,g=1,_1,1__1,1_}}
        \caption{The covers in $((\Fs\times\Ft))^{-1}(p)$ for $\ell=1$, $g=1$, and $\mu_i$'s: $(1,1),(1,1),(1),(1).$}
        \label{fig:ell=1,g=1,(1,1)(1,1)}
    \end{figure}
        \begin{figure}[tb]
        \centering
     \resizebox{.8\textwidth}{!}{\input{Figures/ell=1,g=1,_1,1__1,1__1,1_}}
        \caption{The covers in $((\Fs\times\Ft))^{-1}(p)$ for $\ell=1$, $g=1$, and $\mu_i$'s: $(1,1),(1,1),(1,1).$}
        \label{fig:ell=1,g=1,(1,1)(1,1)(1,1)}
    \end{figure}

Figure \ref{fig:ell=1,g=1,(1,1,1)} shows the first case of positive $\ell$ where the tropical generalized Tevelev degree differs. Since $\mu_1=(1,1,1)$, one of the marked fragments can not be transformed to a new marked fragment, leading to one preimage with multiplicity 1 and therefore $\Tev_{1,1,(1,1,1),(1),(1),(1)}^\trop=1$. The next example, Figure \ref{fig:ell=1,g=1,(1,1,1)(1,1)}, shows again that adding a $\mu_i=(1,1)$ does not change the number tropical generalized Tevelev degree. 
    
    \begin{figure}[tb]
        \centering
     \resizebox{.4\textwidth}{!}{\tikzset{every picture/.style={line width=0.75pt}} 

\begin{tikzpicture}[x=0.75pt,y=0.75pt,yscale=-1,xscale=1]

\draw    (350.67,101.33) -- (323.67,79.33) ;
\draw    (350.67,101.33) -- (329.67,127.33) ;
\draw  [line width=0.75]  (350.67,101.33) .. controls (350.67,90.29) and (366.34,81.33) .. (385.67,81.33) .. controls (405,81.33) and (420.67,90.29) .. (420.67,101.33) .. controls (420.67,112.38) and (405,121.33) .. (385.67,121.33) .. controls (366.34,121.33) and (350.67,112.38) .. (350.67,101.33) -- cycle ;
\draw    (420.67,101.33) -- (492.67,102.33) ;
\draw    (420.67,101.33) -- (420.67,123.33) ;
\draw    (459.74,101.37) -- (459.74,123.37) ;
\draw  [draw opacity=0] (545.8,120) .. controls (545.7,120) and (545.6,120) .. (545.5,120) .. controls (516.51,120) and (493,112.16) .. (493,102.5) .. controls (493,92.84) and (516.51,85) .. (545.5,85) .. controls (545.6,85) and (545.69,85) .. (545.79,85) -- (545.5,102.5) -- cycle ; \draw   (545.8,120) .. controls (545.7,120) and (545.6,120) .. (545.5,120) .. controls (516.51,120) and (493,112.16) .. (493,102.5) .. controls (493,92.84) and (516.51,85) .. (545.5,85) .. controls (545.6,85) and (545.69,85) .. (545.79,85) ;  
\draw    (350.67,41.33) -- (323.67,19.33) ;
\draw    (350.67,41.33) -- (329.67,67.33) ;
\draw    (350.67,41.33) -- (418.67,52.33) ;
\draw    (418.67,52.33) -- (419.67,73.33) ;
\draw  [draw opacity=0] (418.67,52.33) .. controls (440.92,52.77) and (458.75,74.4) .. (459.74,101.37) -- (417.69,104.11) -- cycle ; \draw   (418.67,52.33) .. controls (440.92,52.77) and (458.75,74.4) .. (459.74,101.37) ;  
\draw    (492.67,102.33) -- (492.67,119.33) ;
\draw [color={rgb, 255:red, 65; green, 117; blue, 5 }  ,draw opacity=1 ]   (525,106) -- (525,118) ;
\draw [color={rgb, 255:red, 65; green, 117; blue, 5 }  ,draw opacity=1 ]   (532.55,67.37) -- (528.32,86.28) ;
\draw [color={rgb, 255:red, 65; green, 117; blue, 5 }  ,draw opacity=1 ]   (520.27,69.49) -- (527.61,85.85) ;
\draw    (434.05,90.07) -- (439.06,100.98) ;
\draw    (434.05,90.07) -- (435.67,81.62) ;
\draw [color={rgb, 255:red, 208; green, 2; blue, 27 }  ,draw opacity=1 ]   (427.49,85.38) -- (434.05,90.07) ;
\draw    (444.57,89.97) -- (439.45,100.82) ;
\draw [color={rgb, 255:red, 0; green, 0; blue, 0 }  ,draw opacity=1 ]   (444.57,89.97) -- (452.08,85.78) ;
\draw    (443.95,81.93) -- (444.57,89.97) ;
\draw    (474.1,102) -- (471.67,93.19) ;
\draw [color={rgb, 255:red, 208; green, 2; blue, 27 }  ,draw opacity=1 ]   (474.1,102) -- (478,92.67) ;
\draw    (474.1,102) -- (474.67,91.67) ;
\draw    (438.33,47.33) -- (438.33,59.33) ;
\draw [color={rgb, 255:red, 208; green, 2; blue, 27 }  ,draw opacity=1 ]   (438.33,47.33) -- (443.33,40.33) ;
\draw    (434.33,40.33) -- (438.33,47.33) ;
\draw    (352.67,193.33) -- (325.67,171.33) ;
\draw    (352.67,193.33) -- (331.67,219.33) ;
\draw    (352.67,193.33) -- (542.67,193) ;
\draw    (422,193) -- (422.67,215) ;
\draw    (460,192.33) -- (460.67,214.33) ;
\draw    (498,193) -- (498.67,215) ;
\draw    (438.79,173.63) -- (438.79,193) ;
\draw [color={rgb, 255:red, 208; green, 2; blue, 27 }  ,draw opacity=1 ]   (438.79,173.63) -- (445.6,162.33) ;
\draw [color={rgb, 255:red, 208; green, 2; blue, 27 }  ,draw opacity=1 ]   (433.33,162.33) -- (438.79,173.63) ;
\draw [color={rgb, 255:red, 208; green, 2; blue, 27 }  ,draw opacity=1 ]   (481.33,193) -- (482,181) ;
\draw [color={rgb, 255:red, 65; green, 117; blue, 5 }  ,draw opacity=1 ]   (523.33,179.33) -- (523.45,193) ;
\draw    (462.67,135.67) -- (462.67,161.67) ;
\draw [shift={(462.67,163.67)}, rotate = 270] [color={rgb, 255:red, 0; green, 0; blue, 0 }  ][line width=0.75]    (10.93,-3.29) .. controls (6.95,-1.4) and (3.31,-0.3) .. (0,0) .. controls (3.31,0.3) and (6.95,1.4) .. (10.93,3.29)   ;

\draw (313.67,75.73) node [anchor=north west][inner sep=0.75pt]  [font=\scriptsize,color={rgb, 255:red, 1; green, 1; blue, 240 }  ,opacity=1 ]  {$2$};
\draw (316.67,129.73) node [anchor=north west][inner sep=0.75pt]  [font=\scriptsize,color={rgb, 255:red, 1; green, 1; blue, 240 }  ,opacity=1 ]  {$2$};
\draw (426.67,103.4) node [anchor=north west][inner sep=0.75pt]  [font=\scriptsize,color={rgb, 255:red, 1; green, 1; blue, 240 }  ,opacity=1 ]  {$2$};
\draw (478.57,170.79) node [anchor=north west][inner sep=0.75pt]  [font=\tiny,color={rgb, 255:red, 208; green, 2; blue, 27 }  ,opacity=1 ]  {$4$};
\draw (447.9,153.79) node [anchor=north west][inner sep=0.75pt]  [font=\tiny,color={rgb, 255:red, 208; green, 2; blue, 27 }  ,opacity=1 ]  {$5$};
\draw (425.9,153.46) node [anchor=north west][inner sep=0.75pt]  [font=\tiny,color={rgb, 255:red, 208; green, 2; blue, 27 }  ,opacity=1 ]  {$6$};
\draw (520.57,169.79) node [anchor=north west][inner sep=0.75pt]  [font=\tiny,color={rgb, 255:red, 65; green, 117; blue, 5 }  ,opacity=1 ]  {$1$};
\draw (446,103.4) node [anchor=north west][inner sep=0.75pt]  [font=\scriptsize,color={rgb, 255:red, 1; green, 1; blue, 240 }  ,opacity=1 ]  {$2$};
\draw (461.74,104.77) node [anchor=north west][inner sep=0.75pt]  [font=\scriptsize,color={rgb, 255:red, 1; green, 1; blue, 240 }  ,opacity=1 ]  {$3$};
\draw (548.67,80.07) node [anchor=north west][inner sep=0.75pt]  [font=\scriptsize,color={rgb, 255:red, 1; green, 1; blue, 240 }  ,opacity=1 ]  {$2$};
\draw (481.74,104.77) node [anchor=north west][inner sep=0.75pt]  [font=\scriptsize,color={rgb, 255:red, 1; green, 1; blue, 240 }  ,opacity=1 ]  {$3$};
\draw (421.24,80.13) node [anchor=north west][inner sep=0.75pt]  [font=\tiny,color={rgb, 255:red, 208; green, 2; blue, 27 }  ,opacity=1 ]  {$6$};
\draw (445.24,34.46) node [anchor=north west][inner sep=0.75pt]  [font=\tiny,color={rgb, 255:red, 208; green, 2; blue, 27 }  ,opacity=1 ]  {$5$};
\draw (477.24,84.46) node [anchor=north west][inner sep=0.75pt]  [font=\tiny,color={rgb, 255:red, 208; green, 2; blue, 27 }  ,opacity=1 ]  {$4$};
\draw (375,68.4) node [anchor=north west][inner sep=0.75pt]  [font=\tiny]  {$y'_{1}$};
\draw (422.67,87.73) node [anchor=north west][inner sep=0.75pt]  [font=\tiny]  {$y'_{2}$};
\draw (446.57,89.37) node [anchor=north west][inner sep=0.75pt]  [font=\tiny]  {$y'_{3}$};
\draw (440.33,48.73) node [anchor=north west][inner sep=0.75pt]  [font=\tiny]  {$y'_{4}$};
\draw (459.33,88.73) node [anchor=north west][inner sep=0.75pt]  [font=\tiny]  {$y'_{5}$};
\draw (480.33,88.73) node [anchor=north west][inner sep=0.75pt]  [font=\tiny]  {$y'_{6}$};
\draw (494.33,78.73) node [anchor=north west][inner sep=0.75pt]  [font=\tiny]  {$y'_{7}$};
\draw (470,137.4) node [anchor=north west][inner sep=0.75pt]  [font=\footnotesize]  {$\phi _{2}$};
\draw (534.57,57.13) node [anchor=north west][inner sep=0.75pt]  [font=\tiny,color={rgb, 255:red, 65; green, 117; blue, 5 }  ,opacity=1 ]  {$1_{1}$};
\draw (512.57,59.13) node [anchor=north west][inner sep=0.75pt]  [font=\tiny,color={rgb, 255:red, 65; green, 117; blue, 5 }  ,opacity=1 ]  {$1_{2}$};
\draw (522.57,97.13) node [anchor=north west][inner sep=0.75pt]  [font=\tiny,color={rgb, 255:red, 65; green, 117; blue, 5 }  ,opacity=1 ]  {$1_{3}$};

\end{tikzpicture}}
        \caption{The covers in $((\Fs\times\Ft))^{-1}(p)$ for $\ell=1$, $g=1$, and $\mu_i$'s: $(1,1,1),(1),(1),(1).$}
        \label{fig:ell=1,g=1,(1,1,1)}
    \end{figure}
        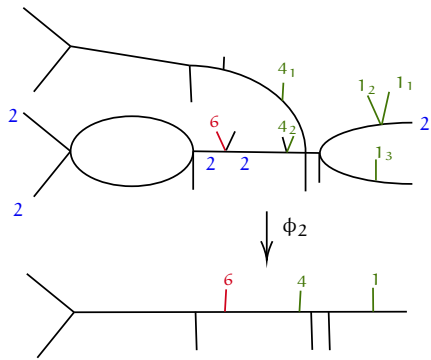
\begin{figure}[tb]
        \centering
     \resizebox{.4\textwidth}{!}{\tikzset{every picture/.style={line width=0.75pt}} 

\begin{tikzpicture}[x=0.75pt,y=0.75pt,yscale=-1,xscale=1]

\draw    (350.67,101.33) -- (323.67,79.33) ;
\draw    (350.67,101.33) -- (329.67,127.33) ;
\draw  [line width=0.75]  (350.67,101.33) .. controls (350.67,90.29) and (366.34,81.33) .. (385.67,81.33) .. controls (405,81.33) and (420.67,90.29) .. (420.67,101.33) .. controls (420.67,112.38) and (405,121.33) .. (385.67,121.33) .. controls (366.34,121.33) and (350.67,112.38) .. (350.67,101.33) -- cycle ;
\draw    (420.67,101.33) -- (492.67,102.33) ;
\draw    (420.67,101.33) -- (420.67,123.33) ;
\draw    (484.82,102.25) -- (484.82,124.25) ;
\draw  [draw opacity=0] (545.8,120) .. controls (545.7,120) and (545.6,120) .. (545.5,120) .. controls (516.51,120) and (493,112.16) .. (493,102.5) .. controls (493,92.84) and (516.51,85) .. (545.5,85) .. controls (545.6,85) and (545.69,85) .. (545.79,85) -- (545.5,102.5) -- cycle ; \draw   (545.8,120) .. controls (545.7,120) and (545.6,120) .. (545.5,120) .. controls (516.51,120) and (493,112.16) .. (493,102.5) .. controls (493,92.84) and (516.51,85) .. (545.5,85) .. controls (545.6,85) and (545.69,85) .. (545.79,85) ;  
\draw    (350.67,41.33) -- (323.67,19.33) ;
\draw    (350.67,41.33) -- (329.67,67.33) ;
\draw    (350.67,41.33) -- (418.67,52.33) ;
\draw    (418.67,52.33) -- (419.67,73.33) ;
\draw  [draw opacity=0] (418.67,52.33) .. controls (455.67,53) and (485.28,75.33) .. (484.82,102.24) .. controls (484.82,102.24) and (484.82,102.25) .. (484.82,102.25) -- (417.75,101.09) -- cycle ; \draw   (418.67,52.33) .. controls (455.67,53) and (485.28,75.33) .. (484.82,102.24) .. controls (484.82,102.24) and (484.82,102.25) .. (484.82,102.25) ;  
\draw    (492.67,102.33) -- (492.67,119.33) ;
\draw [color={rgb, 255:red, 65; green, 117; blue, 5 }  ,draw opacity=1 ]   (525,106) -- (525,118) ;
\draw [color={rgb, 255:red, 65; green, 117; blue, 5 }  ,draw opacity=1 ]   (532.55,67.37) -- (528.32,86.28) ;
\draw [color={rgb, 255:red, 65; green, 117; blue, 5 }  ,draw opacity=1 ]   (520.27,69.49) -- (527.61,85.85) ;
\draw [color={rgb, 255:red, 208; green, 2; blue, 27 }  ,draw opacity=1 ]   (434.05,90.07) -- (439.06,100.98) ;
\draw    (444.57,89.97) -- (439.45,100.82) ;
\draw    (474.1,102) -- (471.67,93.19) ;
\draw [color={rgb, 255:red, 65; green, 117; blue, 5 }  ,draw opacity=1 ]   (474.1,102) -- (478,92.67) ;
\draw    (438.33,47.33) -- (438,54.75) ;
\draw    (352.67,193.33) -- (325.67,171.33) ;
\draw    (352.67,193.33) -- (331.67,219.33) ;
\draw    (352.67,193.33) -- (542.67,193) ;
\draw    (422,193) -- (422.67,215) ;
\draw    (488,192.83) -- (488.67,214.83) ;
\draw    (498,193) -- (498.67,215) ;
\draw [color={rgb, 255:red, 208; green, 2; blue, 27 }  ,draw opacity=1 ]   (439.5,180.25) -- (438.79,193) ;
\draw [color={rgb, 255:red, 65; green, 117; blue, 5 }  ,draw opacity=1 ]   (481.33,193) -- (482,181) ;
\draw [color={rgb, 255:red, 65; green, 117; blue, 5 }  ,draw opacity=1 ]   (523.33,179.33) -- (523.45,193) ;
\draw    (462.67,135.67) -- (462.67,161.67) ;
\draw [shift={(462.67,163.67)}, rotate = 270] [color={rgb, 255:red, 0; green, 0; blue, 0 }  ][line width=0.75]    (10.93,-3.29) .. controls (6.95,-1.4) and (3.31,-0.3) .. (0,0) .. controls (3.31,0.3) and (6.95,1.4) .. (10.93,3.29)   ;
\draw [color={rgb, 255:red, 65; green, 117; blue, 5 }  ,draw opacity=1 ]   (471.6,72.5) -- (472.17,62.17) ;

\draw (313.67,75.73) node [anchor=north west][inner sep=0.75pt]  [font=\scriptsize,color={rgb, 255:red, 1; green, 1; blue, 240 }  ,opacity=1 ]  {$2$};
\draw (316.67,129.73) node [anchor=north west][inner sep=0.75pt]  [font=\scriptsize,color={rgb, 255:red, 1; green, 1; blue, 240 }  ,opacity=1 ]  {$2$};
\draw (426.67,103.4) node [anchor=north west][inner sep=0.75pt]  [font=\scriptsize,color={rgb, 255:red, 1; green, 1; blue, 240 }  ,opacity=1 ]  {$2$};
\draw (478.57,170.79) node [anchor=north west][inner sep=0.75pt]  [font=\tiny,color={rgb, 255:red, 65; green, 117; blue, 5 }  ,opacity=1 ]  {$4$};
\draw (520.57,169.79) node [anchor=north west][inner sep=0.75pt]  [font=\tiny,color={rgb, 255:red, 65; green, 117; blue, 5 }  ,opacity=1 ]  {$1$};
\draw (446,103.4) node [anchor=north west][inner sep=0.75pt]  [font=\scriptsize,color={rgb, 255:red, 1; green, 1; blue, 240 }  ,opacity=1 ]  {$2$};
\draw (548.67,80.07) node [anchor=north west][inner sep=0.75pt]  [font=\scriptsize,color={rgb, 255:red, 1; green, 1; blue, 240 }  ,opacity=1 ]  {$2$};
\draw (428.24,79.46) node [anchor=north west][inner sep=0.75pt]  [font=\tiny,color={rgb, 255:red, 208; green, 2; blue, 27 }  ,opacity=1 ]  {$6$};
\draw (470,137.4) node [anchor=north west][inner sep=0.75pt]  [font=\footnotesize]  {$\phi _{2}$};
\draw (534.57,57.13) node [anchor=north west][inner sep=0.75pt]  [font=\tiny,color={rgb, 255:red, 65; green, 117; blue, 5 }  ,opacity=1 ]  {$1_{1}$};
\draw (512.57,59.13) node [anchor=north west][inner sep=0.75pt]  [font=\tiny,color={rgb, 255:red, 65; green, 117; blue, 5 }  ,opacity=1 ]  {$1_{2}$};
\draw (522.57,97.13) node [anchor=north west][inner sep=0.75pt]  [font=\tiny,color={rgb, 255:red, 65; green, 117; blue, 5 }  ,opacity=1 ]  {$1_{3}$};
\draw (436.24,169.96) node [anchor=north west][inner sep=0.75pt]  [font=\tiny,color={rgb, 255:red, 208; green, 2; blue, 27 }  ,opacity=1 ]  {$6$};
\draw (467.57,49.29) node [anchor=north west][inner sep=0.75pt]  [font=\tiny,color={rgb, 255:red, 65; green, 117; blue, 5 }  ,opacity=1 ]  {$4_{1}$};
\draw (467.57,82.29) node [anchor=north west][inner sep=0.75pt]  [font=\tiny,color={rgb, 255:red, 65; green, 117; blue, 5 }  ,opacity=1 ]  {$4_{2}$};

\end{tikzpicture}}
        \caption{The cover in $((\Fs\times\Ft))^{-1}(p)$ for $\ell=1$, $g=1$, and $\mu_i$'s: $(1,1,1),(1,1),(1).$}
        \label{fig:ell=1,g=1,(1,1,1)(1,1)}
    \end{figure}

    \vspace{0.5em}
    \noindent\textbf{Construction.}
In this section we make explicit the construction of solutions in the case of $\ell$ positive and $\mu_i$ are vectors of all 1s demonstrated in the previous section. As observed in the examples, solutions are constructed following the same algorithm as detailed in the $\ell=0$ case. There is one difference coming from the fact that as $\ell$ increases, so does the number of joined ends.  

    As in the previous case, the genus part of the covers $\Gamma\to T$ are constructed in the same way as in the original case, so we focus our attention on the genus zero section containing the marked points. Recall from Section \ref{sec:ell>0}, that the marked fragments that can be placed on covers when $\ell$ is positive are of the form $F_j$ when $1+\ell\leq j \leq n-\ell-2$. The marked fragments can be turned into new marked fragments following the algorithm in the $\ell=0$ case.

    We now organize the covers in a rectangular grid to count using previous techniques. In Section \ref{sec:ell>0}, it was proven that $\Tev_{g,\ell}^\trop=2^g$ for positive $\ell$ and $\mu_i$ all equal to $(1)$. We now examine the other cases by comparing the grids. There are three ways the grid changes:
    \begin{enumerate}
    \item if $|\mu_1|>j$: a column is lost for each $j$ this holds for due to the lose of the corresponding marked fragment. Since $1+\ell\leq j\leq n-\ell-2$, columns are lost when $|\mu_1|>\ell+1$ and therefore lose columns corresponding to $F_{1+\ell},\dots,F_{|\mu_1|-1}$. The height of column $F_i$ is ${{g}\choose{i-1-\ell}}$. 
     \item if $|\mu_k|>n-j-1$: a column is lost for each $j$ this holds for due to the lose of the corresponding marked fragment. Each column $F_j$ has height ${{g}\choose{g-j+1+\ell}}={{g}\choose{j-1-\ell}}$.
    \item if $2$ or more fragments become the same with equivalent covers. When this occurs, all but one copy of the covers with given active edge degree are removed. Recall that $i$ is the total number of joined ends, which in the case of $\ell>0$, increases by $\ell$ from the $\ell=0$ case. Therefore $i\geq\ell$. There are $a+b=|\mu_h|-i-1$ covers that are equivalent, each have ${{g}\choose{i-\ell}}-{{g}\choose{i-1-\ell}}$ unique ways to form the genus part that produces the given active degree.
\end{enumerate}
Combining all of these and shifting the summations, we have 
 \begin{equation}
    \Tev^\trop_{g,\ell,\mu_1,\dots,\mu_k}\geq2^g -\sum_{i=0}^{\lvert\mu_1\rvert-\ell-2}{g\choose i}-\sum_{i=0}^{\lvert\mu_k\rvert-\ell-2}{g\choose i}-\sum_{h=2}^{k-1}\sum_{i=0}^{\lvert\mu_h\rvert-\ell-2}\bigg(\lvert\mu_h\rvert-i-\ell-1\bigg)\bigg({g\choose i}-{g \choose i-1}\bigg)
    \end{equation}
 when $\ell>0$.  \\  
    \vspace{0.5em}
            
    \noindent\textbf{Exclusion.}
    Section \ref{sec:excludegentev} excludes all other possible other covers $\Gamma\to T$ in the preimage of $p$, therefore we have concluded the proof of Theorem \ref{def:gentroptev} when $\ell>0$ and $\mu_i$ are vectors of all 1s. 

    \subsubsection*{Negative $\ell$}
    
    \vspace{0.5em}
    \noindent\textbf{Examples.}
     Moving on to negative $\ell$ values, we start by looking at an example, shown in Figure \ref{fig:ell=-1,g=3,(1,1)}, when $\ell=-1, g=3,$ and $\mu_1=(1,1).$ Comparing this example with Figure \ref{fig:l=-1,g=3}, there is one column missing corresponding to losing that marked fragment. This is following the patterns seen in the $\ell=0 $ and positive $\ell$ cases. 

     \begin{figure}[tb]
         \centering
     \resizebox{.3\textwidth}{!}{\input{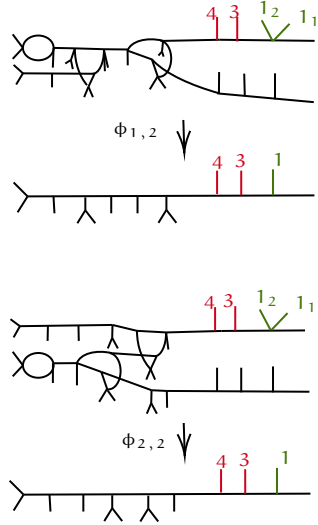}}
         \caption{The covers in $((\Fs \times \Ft))^{-1}(p)$ for $\ell=-1,g=3$, and $\mu_i$'s:$(1,1),(1),(1)$.}
         \label{fig:ell=-1,g=3,(1,1)}
     \end{figure}
    
    \vspace{0.5em}
    \noindent\textbf{Construction.}
We now make explicit the construction of solutions in the case of $\ell$ negative and $\mu_i$ are vectors of all $1$s demonstrated in the example. As in the positive $\ell$ case, solutions are constructed following the same algorithm as detailed in the $\ell=0$ case. The difference is that when $\ell$ is negative sections of the solution grid are removed in the original marked points case shown in Section \ref{sec:ell<0}. When removing sections of the grid as the $\mu_i$ change, some sections need to be added back in due to double removing. 

Recall from Theorem \ref{thm:ellnegative} that multiple rectangular section of the solution grid are removed, each with width $g-2i+1$ and height ${g\choose i} - {g\choose i-1}$. Next, we examine which covers are removed as $\mu_i$ differ from $(1)$. Similar to the positive $\ell$ case, there are $3$ situations when covers are removed from the grid:

 \begin{enumerate}
    \item if $|\mu_1|>j$: a column is lost for each $j$ this holds for due to the lose of the corresponding marked fragment. 
    \begin{enumerate}
        \item When $|\mu_1|=2$, lose $F_0$ column, which has height ${{g}\choose{i-\ell}}$.
        \item When $|\mu_1|>2$, lose $F_0$ column and $F_2,\dots,F_{|\mu_1|-1}$ columns where the height of column $F_i$ is ${{g}\choose{i-1-\ell}}$.
    \end{enumerate}
     \item if $|\mu_k|>n-j-1$: a column is lost for each $j$ this holds for due to the lose of the corresponding marked fragment. Each column $F_j$ has height ${{g}\choose{g-j+1+\ell}}={{g}\choose{j-1-\ell}}$.
    \item if $2$ or more fragments become the same with equivalent covers. When this occurs, all but one copy of the covers with given active edge degree are removed. Recall that $i$ is the total number of joined ends, which in the case of $\ell>0$, increases by $\ell$ from the $\ell=0$ case. Therefore $i\geq\ell$. There are $a+b=|\mu_h|-i-1$ covers that are equivalent, each have ${{g}\choose{i-\ell}}-{{g}\choose{i-1-\ell}}$ unique ways to form the genus part that produces the given active degree.
\end{enumerate}

The height of the columns in the first two ways the grid changes is subtracting some of the same covers that were removed in the rectangle sections removed in Theorem \ref{thm:ellnegative}. We must add back in these covers. There are $(\lvert\mu_1\rvert-\lvert\mu_k\rvert-2){g\choose-\ell-1}$ covers double counted. The $(\lvert\mu_1\rvert-\lvert\mu_k\rvert-2)$ is due to the fact that in (1) $\lvert\mu_1\rvert$ must be greater than 1 to lose a column and similarly $\lvert\mu_k\rvert$ must be great than 1 to lose a column in (2). Then looking at $\sum_{i=0}^{-\ell-1}\bigg({g\choose i}-{g \choose i-1}\bigg)$ and $\sum_{i=-\ell}^{\lvert\mu_1\rvert-\ell-2}{g\choose i}$, the overlap is ${g\choose-\ell-1}$. Combining all of this together we get 
\begin{align*}
        \Tev^\trop_{g,\ell,\mu_1,\dots,\mu_k}\geq&2^g - \sum_{i=0}^{-\ell-1}\bigg(g-2i+1\bigg)\bigg({g\choose i}-{g \choose i-1}\bigg)-\sum_{i=-\ell}^{\lvert\mu_1\rvert-\ell-2}{g\choose i}-\sum_{i=-\ell}^{\lvert\mu_k\rvert-\ell-2}{g\choose i}\\&+\bigg(\lvert\mu_1\rvert+\lvert\mu_k\rvert-2\bigg){g\choose-\ell-1}-\sum_{h=2}^{k-1}\sum_{i=-\ell}^{\lvert\mu_h\rvert-\ell-2}\bigg(\lvert\mu_h\rvert-i-\ell-1\bigg)\bigg({g\choose i}-{g \choose i-1}\bigg)
    \end{align*}
when $\ell<0$. 
     \vspace{0.5em}
     
    \noindent\textbf{Exclusion.}
Section \ref{sec:excludegentev} excludes all other possible other covers $\Gamma\to T$ in the preimage of $p$, therefore we have concluded the proof of Theorem \ref{def:gentroptev} when $\ell<0$ and $\mu_i$ are vectors of all ones.
    
 \subsection{General ramification profiles cases come from unramified case}

 In this final section we aim to prove that tropical generalized Tevelev degrees only depend on the magnitude of each $\mu_i$, which will complete the proof of Theorem \ref{thm:higherram}. We start by showing that the problem can be isolated to one part of a single partition $\mu_i$ at a time, and we further simplify to only needing to look at the case of individual marked points. Then we outline a bijection between the cases $(\alpha)$ and $(\alpha-1,1)$. Finally, we prove that all covers remain multiplicity one. 

\subsubsection*{Example}
Figures \ref{fig:ell=0,g=1,(2)(1,1)} and \ref{fig:ell=0,g=1,(2)(2)} together with previous Figure \ref{fig:l=0g=2(1,1)(1,1)} are an example that the number of covers counted when computing the tropical generalized Tevelev degrees only depends on the magnitude of each $\mu_i$. 
     \begin{figure}[tb]
         \centering
     \resizebox{.6\textwidth}{!}{\input{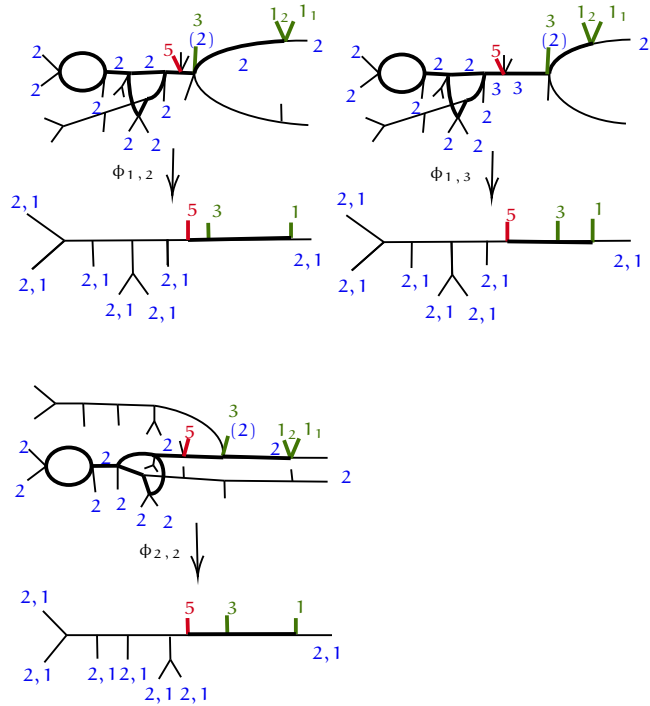}}
         \caption{The covers in $((\Fs \times \Ft))^{-1}(p)$ for $\ell=0,g=2$, and $\mu_i$'s:$(1,1),(2),(1)$.}
         \label{fig:ell=0,g=1,(2)(1,1)}
     \end{figure}

\begin{figure}[tb]
         \centering
     \resizebox{.6\textwidth}{!}{\input{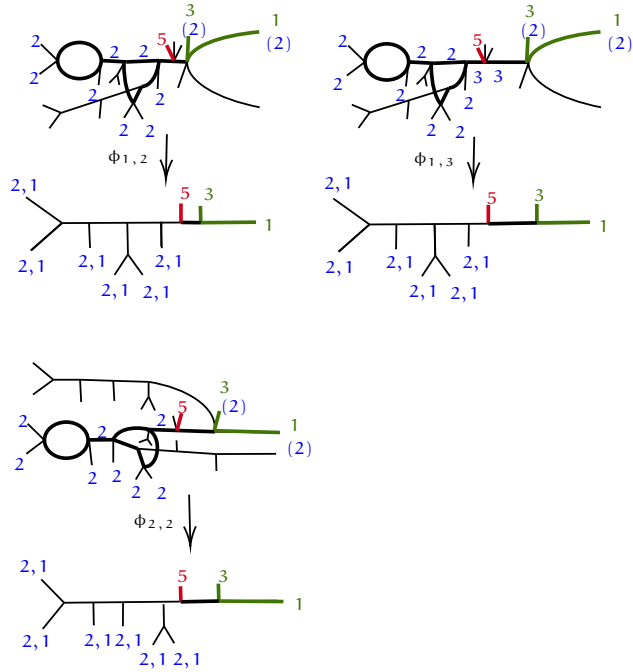}}
         \caption{The covers in $((\Fs \times \Ft))^{-1}(p)$ for $\ell=0,g=2$, and $\mu_i$'s:$(2),(2),(1)$.}
         \label{fig:ell=0,g=1,(2)(2)}
     \end{figure}

\subsubsection*{Isolation}

To simplify the problem we look at each marked point individually. Due to the chosen point $p=(\overline{\Gamma},\overline{T})\in \Mgn{g,n}^\trop\times\Mgn{0,k}^\trop$, all of the preimages of a single marked point must stabilize to the active edge in a row. In particular, the preimages must stabilize in the order of the partition $\mu_i$ associated to the given marked point. Therefore, a cover can be cut along the active edge on either side of any number of cuts and joins that the given marked points stabilize along, allowing the problem to be specialized to one part of a single partition. 

To avoid a case-by-case analysis of all possible numbers and configurations of marked
points lying on a given tree, we reduce the problem to the contribution of a single
marked point at a time. The justification for this reduction is that any additional
marked points on the same tree are forced to behave uniformly: they introduce no new
combinatorial choices and do not affect the multiplicity of a cover. The following
lemma makes this precise.

\begin{lemma}
Let $\varphi \colon \Gamma \to T$ be a tropical admissible cover contributing to
$\deg(\Fs \times \Ft)$, and fix a marked point with associated ramification profile
$\mu_i$. Suppose that the corresponding marked point of $T$ lies on a tree containing
additional marked points.

Then, regardless of the number of additional marked points on that tree, the local
combinatorial type of the cover near the marked point with profile $\mu_i$ is uniquely
determined and the total contribution of the additional marked points to the local
degree is equal to $1$. In particular, the presence of additional marked points on the
same tree does not affect either the enumeration or the multiplicity of covers.
\end{lemma}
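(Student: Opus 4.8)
The plan is to prove the statement in two stages: first isolate the marked point carrying $\mu_i$ using the genericity of the target point $p=(\overline{\Gamma},\overline{T})$, and then verify the two assertions — rigidity of the local combinatorial type and multiplicity one — separately. First I would invoke the ordering property recorded in the Isolation discussion above: since every length $L_i$ of $\overline{T}$ is distinct and far larger than every length $x_j$ of $\overline{\Gamma}$, the preimages of each target marked point must stabilize to the active path consecutively and in the order dictated by its partition. Consequently the block of cuts and joins supporting the preimages of the marked point with profile $\mu_i$ occupies a well-defined interval of the active edge, and cutting $\Gamma\to T$ along the active edge on either side of this interval separates its contribution from that of the additional marked points sharing the tree. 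This reduces both claims to a local statement about the piece of the cover lying over a neighborhood of the marked point carrying $\mu_i$, with the remaining marked points entering only as fixed boundary data.

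For the rigidity assertion I would argue that the harmonicity condition \eqref{eq:localdegree} and the local Riemann--Hurwitz condition at every vertex of $\Gamma$ along this interval, together with the requirement that $\phi$ stabilize to the fixed pair $(\overline{\Gamma},\overline{T})$ and the forced ordering of the preimages, leave no combinatorial freedom. The expansion factors of the edges incident to each attaching vertex are pinned by the entries of $\mu_i$ and by the ambient active-edge degree, while the attachment points of the additional marked points sit at distinct, strictly separated locations further along the active path, so they cannot perturb the type near $\mu_i$. Hence the local type is uniquely determined, independently of how many further marked points share the tree.

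The main work is the multiplicity computation, and I expect this to be the principal obstacle. Here I would decompose the local degree into its four factors — the automorphism ratio $|\Aut(\overline{\Gamma})|/|\Aut(\phi)|$, the product of local Hurwitz numbers $\prod_{v}H_v$, the dilation determinant $|\det(M_{\sigma_\Theta})|$, and the correction $\prod_{e\in CE(T)}\frac{\prod_{\phi(e')=e}m_{e'}}{M_e}$ — and show that the additional marked points contribute a factor of $1$ to the total. Each vertex produced by an additional marked preimage is of one of the two types already shown to have local Hurwitz number $1$ (trivalent with degree-one edges, or a vertex from which a marked degree-one edge splits off), so $\prod_v H_v$ is unaffected. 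Ordering the edge-length coordinates so that the edges supporting the additional marked points form a separate block renders $M_{\sigma_\Theta}$ block triangular; the hard part will be checking that the determinant of this block is cancelled exactly by the automorphism factor arising from interchanging equal-expansion-factor preimages and by the $\lcm$ corrections $m_{e'}/M_e$, generalizing the explicit cancellations carried out in the genus-one and genus-two worked examples.

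Once this cancellation is verified for an arbitrary profile and an arbitrary number of accompanying marked points, both the enumeration and the multiplicity are seen to be insensitive to the additional marks, which completes the proof and justifies the reduction to a single marked point at a time.
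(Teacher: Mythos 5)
Your isolation and rigidity steps match the paper's: stabilization forces the preimages of each marked point into contiguous blocks along the active path, so the portion of the cover near the $\mu_i$-block contains no marked preimages of the other points, and harmonicity plus local Riemann--Hurwitz then pin the local combinatorial type. The gap is in the multiplicity computation, which you correctly flag as the main work but then resolve with a wrong mechanism. At the vertex where the edge of expansion factor $\mu_{i,j}$ (carrying a marked preimage of the $\mu_i$-point) passes over an additional marked end of the tree, the fiber in the direction of that end consists of $\mu_{i,j}$ \emph{unmarked} unramified ends. This vertex is \emph{not} one of the two types from \cite{troptev} with local Hurwitz number $1$: those require either a trivalent vertex with degree-one edges, or that one of the inverse images of the non-branch point be marked. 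Here none is marked, so the local Hurwitz number is
\[
H_0\bigl(\mu_{i,j},\,\mu_{i,j},\,(1)^{\mu_{i,j}}\bigr)=\frac{1}{\mu_{i,j}}\neq 1
\quad\text{for }\mu_{i,j}>1,
\]
and your claim that $\prod_v H_v$ is unaffected fails.

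The paper's proof hinges on exactly the cancellation you miss: the edge of $\overline{T}$ adjacent to the additional marked point is expressed as $\mu_{i,j}$ times the corresponding free length in $\Gamma$, so the dilation matrix picks up a factor of $\mu_{i,j}$, which cancels the Hurwitz factor $1/\mu_{i,j}$. Your proposed alternative --- cancelling the dilation block against the automorphism ratio and the $\lcm$ corrections --- does not work in this configuration: over each relevant edge of $T$ there is only one preimage edge of expansion factor greater than one, so $\prod_{e\in CE(T)}\prod_{\phi(e')=e}m_{e'}/M_e = \mu_{i,j}/\mu_{i,j}=1$, and no new automorphism factor of size $\mu_{i,j}$ arises. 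Carried out as written, your bookkeeping would leave an uncancelled spurious factor of $\prod_j \mu_{i,j}$ in the local degree, so the additional marked points would appear to change the multiplicity, contradicting the statement you are trying to prove.
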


\begin{proof}
Since the preimages of each marked point stabilize to the active path in contiguous
blocks, the portion of the cover relevant to the marked point with profile $\mu_i$
contains no preimages of the other marked points. Each additional marked point is
therefore covered entirely by unramified ends.

This forces the corresponding vertex to have degree $(1)^{\mu_{i,j}}$ in the direction
of the additional marked point, where $\mu_{i,j}$ denotes a part of the partition $\mu_i$, and degree $\mu_{i,j}$ in the remaining two directions,
as determined by harmonicity and the Riemann--Hurwitz condition. Such a vertex is unique
and introduces no additional combinatorial choices. Its local Hurwitz number is
\[
H_0(\mu_{i,j},\mu_{i,j},(1)^{\mu_{i,j}})=\frac{1}{\mu_{i,j}},
\]
while the corresponding free-length parameter contributes a factor of $\mu_{i,j}$ to the
determinant of the dilation matrix. These factors cancel, and no other terms in the local
degree depend on this configuration. Hence the total contribution is equal to $1$.
\end{proof}


\subsubsection*{Bijection}

We now describe a local bijection between tropical covers contributing with ramification
profile $(\alpha)$ and those contributing with profile $(\alpha-1,1)$. The bijection is defined
entirely at the level of the local configuration of marked preimages along the active edge, while
leaving the remainder of the cover fixed.

Fix a marked point of $T$ and consider the marked preimages corresponding to a single part
$\alpha$ of its ramification profile. By stabilization, these preimages occur along the active
path in a contiguous block, appearing in the following order: first on cuts, then on the active
edge, and finally on joins. The bijection consists of a single local operation that either combines
two adjacent marked preimages into one or separates one marked preimage into two adjacent
ones, thereby changing the local ramification profile from $(\alpha)$ to $(\alpha-1,1)$ or vice
versa.

Lemma \ref{lemma:max3cutsandjoins} gives that the preimages of any marked point not in the right-hand tree is covered by at most 1 marked preimage on a join or at most 1 marked preimage on a cut. Consequently, the relative position of
the marked preimages with respect to cuts, the active edge, and joins determines one of six
possible local configurations. These cases exhaust all possibilities.

We now describe the unique local modification in each case:
\begin{itemize}
    \item \emph{All preimages lie on joins:} a preimage can be combined or separated on a join but not on a cut because there is not preimage marked on the active edge. A preimage on the active edge can be combined resulting in a vertex on the active edge containing the marked preimage only when $\alpha=2$.
    \item \emph{Preimages lie on joins and the active edge:} a preimage can be combined or separated join but not on a cut because that would violate the order of $(\alpha-1,1)$ when stabilizing. A preimage on the active edge can be separated. 
    \item \emph{Preimages lie on joins, active edge, and $1$ cut:} similar to above, a preimage can be combined or separated on a join. Due to the cut, a preimage on the active edge can not be separated. 
    \item \emph{All preimages lie on cuts:} a preimage can be combined or separated on a cut but not a join. A preimage on the active edge can be combined resulting in a a vertex on the active edge containing the marked preimage.
    \item \emph{Preimages lie on cuts and active edge:} a preimage can combined on a join or a preimage can be separated on the active edge. 
    \item \emph{Preimages lie on cuts, active edge and 1 join:} a preimage can be separated on a join but not a cut because that would no longer be covering the same point.
\end{itemize}
Figures \ref{fig:alljoins}, \ref{fig:joins+active}, \ref{fig:cut+joins+active}, and  \ref{fig:allcuts} show the first 4 cases, respectively, the last two are omitted due to similarity with the others. In each of the cases above, exactly one local modification is possible, determined uniquely by the
relative positions of the marked preimages on cuts, joins, and the active edge. Moreover,
the operation is reversible: applying the inverse local modification recovers the original
configuration. Therefore, the constructions above define a bijection between the sets of
covers contributing with ramification profiles $(\alpha)$ and $(\alpha-1,1)$.

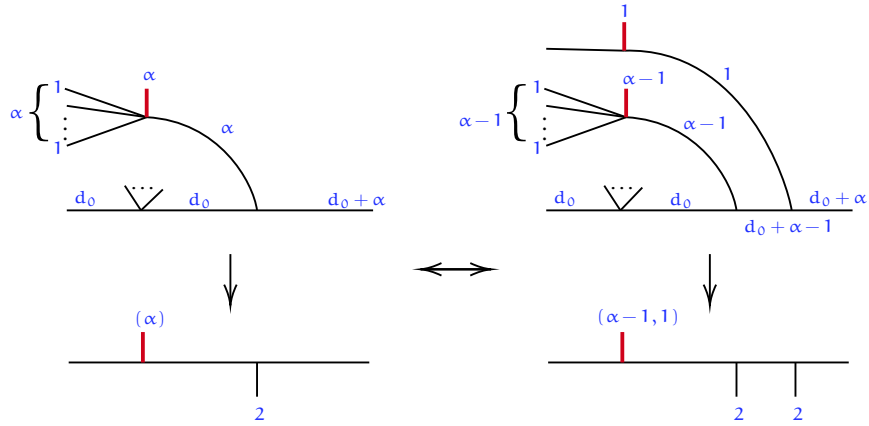
\begin{figure}
    \centering
     \resizebox{.8\textwidth}{!}{\tikzset{every picture/.style={line width=0.75pt}} 

\begin{tikzpicture}[x=0.75pt,y=0.75pt,yscale=-1,xscale=1]

\draw    (59,131) -- (220,131) ;
\draw    (101,82) .. controls (139,84) and (157,116) .. (159,131) ;
\draw    (101,82) -- (58,67) ;
\draw    (101,82) -- (59,97) ;
\draw    (101,82) -- (59,76) ;
\draw    (98,131) -- (89.5,118) ;
\draw    (98,131) -- (109.5,120) ;
\draw [color={rgb, 255:red, 208; green, 2; blue, 27 }  ,draw opacity=1 ][line width=1.5]    (101,82) -- (101,67) ;
\draw    (60,211) -- (218,211) ;
\draw [color={rgb, 255:red, 208; green, 2; blue, 27 }  ,draw opacity=1 ][line width=1.5]    (99,211) -- (99,195) ;
\draw    (159,211) -- (159,229) ;
\draw    (145,154) -- (145,179) ;
\draw [shift={(145,181)}, rotate = 270] [color={rgb, 255:red, 0; green, 0; blue, 0 }  ][line width=0.75]    (10.93,-3.29) .. controls (6.95,-1.4) and (3.31,-0.3) .. (0,0) .. controls (3.31,0.3) and (6.95,1.4) .. (10.93,3.29)   ;
\draw    (311,131) -- (472,131) ;
\draw    (353,82) .. controls (391,84) and (409,116) .. (411,131) ;
\draw    (353,82) -- (310,67) ;
\draw    (353,82) -- (311,97) ;
\draw    (353,82) -- (311,76) ;
\draw    (350,131) -- (341.5,118) ;
\draw    (350,131) -- (361.5,119) ;
\draw [color={rgb, 255:red, 208; green, 2; blue, 27 }  ,draw opacity=1 ][line width=1.5]    (353,82) -- (353,67) ;
\draw    (312,211) -- (470,211) ;
\draw [color={rgb, 255:red, 208; green, 2; blue, 27 }  ,draw opacity=1 ][line width=1.5]    (351,211) -- (351,195) ;
\draw    (411,211) -- (411,229) ;
\draw    (397,154) -- (397,179) ;
\draw [shift={(397,181)}, rotate = 270] [color={rgb, 255:red, 0; green, 0; blue, 0 }  ][line width=0.75]    (10.93,-3.29) .. controls (6.95,-1.4) and (3.31,-0.3) .. (0,0) .. controls (3.31,0.3) and (6.95,1.4) .. (10.93,3.29)   ;
\draw    (352,47) .. controls (415,45) and (438,117) .. (440,131) ;
\draw    (352,47) -- (311,46) ;
\draw [color={rgb, 255:red, 208; green, 2; blue, 27 }  ,draw opacity=1 ][line width=1.5]    (352,47) -- (352,32) ;
\draw    (442,211) -- (442,229) ;
\draw    (245,162) -- (280,162) ;
\draw [shift={(282,162)}, rotate = 180] [color={rgb, 255:red, 0; green, 0; blue, 0 }  ][line width=0.75]    (10.93,-3.29) .. controls (6.95,-1.4) and (3.31,-0.3) .. (0,0) .. controls (3.31,0.3) and (6.95,1.4) .. (10.93,3.29)   ;
\draw    (282,162) -- (245,162) ;
\draw    (282,162) -- (247,162) ;
\draw [shift={(245,162)}, rotate = 360] [color={rgb, 255:red, 0; green, 0; blue, 0 }  ][line width=0.75]    (10.93,-3.29) .. controls (6.95,-1.4) and (3.31,-0.3) .. (0,0) .. controls (3.31,0.3) and (6.95,1.4) .. (10.93,3.29)   ;

\draw (55,72.4) node [anchor=north west][inner sep=0.75pt]  [font=\small]  {$\vdots $};
\draw (61,119.4) node [anchor=north west][inner sep=0.75pt]  [font=\tiny,color={rgb, 255:red, 31; green, 47; blue, 255 }  ,opacity=1 ]  {$d_{0}$};
\draw (194,120.4) node [anchor=north west][inner sep=0.75pt]  [font=\tiny,color={rgb, 255:red, 31; green, 47; blue, 255 }  ,opacity=1 ]  {$d_{0} +\alpha $};
\draw (138,84.4) node [anchor=north west][inner sep=0.75pt]  [font=\tiny,color={rgb, 255:red, 31; green, 47; blue, 255 }  ,opacity=1 ]  {$\alpha $};
\draw (27,76.4) node [anchor=north west][inner sep=0.75pt]  [font=\tiny,color={rgb, 255:red, 31; green, 47; blue, 255 }  ,opacity=1 ]  {$\alpha $};
\draw (38,62.4) node [anchor=north west][inner sep=0.75pt]  [font=\Huge]  {$\{$};
\draw (50,61.4) node [anchor=north west][inner sep=0.75pt]  [font=\tiny,color={rgb, 255:red, 31; green, 47; blue, 255 }  ,opacity=1 ]  {$1$};
\draw (50,93.4) node [anchor=north west][inner sep=0.75pt]  [font=\tiny,color={rgb, 255:red, 31; green, 47; blue, 255 }  ,opacity=1 ]  {$1$};
\draw (91,117.4) node [anchor=north west][inner sep=0.75pt]  [font=\tiny]  {$\dotsc $};
\draw (97,56.4) node [anchor=north west][inner sep=0.75pt]  [font=\tiny,color={rgb, 255:red, 31; green, 47; blue, 255 }  ,opacity=1 ]  {$\alpha $};
\draw (156,233.4) node [anchor=north west][inner sep=0.75pt]  [font=\tiny,color={rgb, 255:red, 31; green, 47; blue, 255 }  ,opacity=1 ]  {$2$};
\draw (93,182.4) node [anchor=north west][inner sep=0.75pt]  [font=\tiny,color={rgb, 255:red, 31; green, 47; blue, 255 }  ,opacity=1 ]  {$( \alpha )$};
\draw (307,72.4) node [anchor=north west][inner sep=0.75pt]  [font=\small]  {$\vdots $};
\draw (313,119.4) node [anchor=north west][inner sep=0.75pt]  [font=\tiny,color={rgb, 255:red, 31; green, 47; blue, 255 }  ,opacity=1 ]  {$d_{0}$};
\draw (447,119.4) node [anchor=north west][inner sep=0.75pt]  [font=\tiny,color={rgb, 255:red, 31; green, 47; blue, 255 }  ,opacity=1 ]  {$d_{0} +\alpha $};
\draw (381,80.4) node [anchor=north west][inner sep=0.75pt]  [font=\tiny,color={rgb, 255:red, 31; green, 47; blue, 255 }  ,opacity=1 ]  {$\alpha -1$};
\draw (263,78.4) node [anchor=north west][inner sep=0.75pt]  [font=\tiny,color={rgb, 255:red, 31; green, 47; blue, 255 }  ,opacity=1 ]  {$\alpha -1$};
\draw (289,62.4) node [anchor=north west][inner sep=0.75pt]  [font=\Huge]  {$\{$};
\draw (302,61.4) node [anchor=north west][inner sep=0.75pt]  [font=\tiny,color={rgb, 255:red, 31; green, 47; blue, 255 }  ,opacity=1 ]  {$1$};
\draw (302,93.4) node [anchor=north west][inner sep=0.75pt]  [font=\tiny,color={rgb, 255:red, 31; green, 47; blue, 255 }  ,opacity=1 ]  {$1$};
\draw (343,117.4) node [anchor=north west][inner sep=0.75pt]  [font=\tiny]  {$\dotsc $};
\draw (349,56.4) node [anchor=north west][inner sep=0.75pt]  [font=\tiny,color={rgb, 255:red, 31; green, 47; blue, 255 }  ,opacity=1 ]  {$\alpha -1$};
\draw (408,233.4) node [anchor=north west][inner sep=0.75pt]  [font=\tiny,color={rgb, 255:red, 31; green, 47; blue, 255 }  ,opacity=1 ]  {$2$};
\draw (336,181.4) node [anchor=north west][inner sep=0.75pt]  [font=\tiny,color={rgb, 255:red, 31; green, 47; blue, 255 }  ,opacity=1 ]  {$( \alpha -1,1)$};
\draw (121,120.4) node [anchor=north west][inner sep=0.75pt]  [font=\tiny,color={rgb, 255:red, 31; green, 47; blue, 255 }  ,opacity=1 ]  {$d_{0}$};
\draw (374,120.4) node [anchor=north west][inner sep=0.75pt]  [font=\tiny,color={rgb, 255:red, 31; green, 47; blue, 255 }  ,opacity=1 ]  {$d_{0}$};
\draw (348,21.4) node [anchor=north west][inner sep=0.75pt]  [font=\tiny,color={rgb, 255:red, 31; green, 47; blue, 255 }  ,opacity=1 ]  {$1$};
\draw (413,134.4) node [anchor=north west][inner sep=0.75pt]  [font=\tiny,color={rgb, 255:red, 31; green, 47; blue, 255 }  ,opacity=1 ]  {$d_{0} +\alpha -1$};
\draw (402,54.4) node [anchor=north west][inner sep=0.75pt]  [font=\tiny,color={rgb, 255:red, 31; green, 47; blue, 255 }  ,opacity=1 ]  {$1$};
\draw (439,233.4) node [anchor=north west][inner sep=0.75pt]  [font=\tiny,color={rgb, 255:red, 31; green, 47; blue, 255 }  ,opacity=1 ]  {$2$};

\end{tikzpicture}}
    \caption{The all joins case of transitioning between the ramification indices $(\alpha)$ and $(\alpha-1,1)$.}
    \label{fig:alljoins}
\end{figure}

\begin{figure}
    \centering
     \resizebox{.8\textwidth}{!}{\tikzset{every picture/.style={line width=0.75pt}} 

\begin{tikzpicture}[x=0.75pt,y=0.75pt,yscale=-1,xscale=1]

\draw    (59,131) -- (220,131) ;
\draw    (98,131) -- (63,103) ;
\draw    (98,131) -- (62,123) ;
\draw    (98,131) -- (98,119) ;
\draw    (98,131) -- (114,124) ;
\draw [color={rgb, 255:red, 208; green, 2; blue, 27 }  ,draw opacity=1 ][line width=1.5]    (98,131) -- (92,117) ;
\draw    (60,211) -- (218,211) ;
\draw [color={rgb, 255:red, 208; green, 2; blue, 27 }  ,draw opacity=1 ][line width=1.5]    (99,211) -- (99,195) ;
\draw    (145,154) -- (145,179) ;
\draw [shift={(145,181)}, rotate = 270] [color={rgb, 255:red, 0; green, 0; blue, 0 }  ][line width=0.75]    (10.93,-3.29) .. controls (6.95,-1.4) and (3.31,-0.3) .. (0,0) .. controls (3.31,0.3) and (6.95,1.4) .. (10.93,3.29)   ;
\draw    (311,131) -- (472,131) ;
\draw    (312,211) -- (470,211) ;
\draw [color={rgb, 255:red, 208; green, 2; blue, 27 }  ,draw opacity=1 ][line width=1.5]    (351,211) -- (351,195) ;
\draw    (397,154) -- (397,179) ;
\draw [shift={(397,181)}, rotate = 270] [color={rgb, 255:red, 0; green, 0; blue, 0 }  ][line width=0.75]    (10.93,-3.29) .. controls (6.95,-1.4) and (3.31,-0.3) .. (0,0) .. controls (3.31,0.3) and (6.95,1.4) .. (10.93,3.29)   ;
\draw    (352,47) .. controls (415,45) and (438,117) .. (440,131) ;
\draw    (352,47) -- (311,46) ;
\draw [color={rgb, 255:red, 208; green, 2; blue, 27 }  ,draw opacity=1 ][line width=1.5]    (352,47) -- (352,32) ;
\draw    (442,211) -- (442,229) ;
\draw    (351,130) -- (316,102) ;
\draw    (351,130) -- (315,122) ;
\draw    (351,130) -- (351,118) ;
\draw    (351,130) -- (367,123) ;
\draw [color={rgb, 255:red, 208; green, 2; blue, 27 }  ,draw opacity=1 ][line width=1.5]    (351,130) -- (345,116) ;
\draw    (247,165) -- (282,165) ;
\draw [shift={(284,165)}, rotate = 180] [color={rgb, 255:red, 0; green, 0; blue, 0 }  ][line width=0.75]    (10.93,-3.29) .. controls (6.95,-1.4) and (3.31,-0.3) .. (0,0) .. controls (3.31,0.3) and (6.95,1.4) .. (10.93,3.29)   ;
\draw    (284,165) -- (247,165) ;
\draw    (284,165) -- (249,165) ;
\draw [shift={(247,165)}, rotate = 360] [color={rgb, 255:red, 0; green, 0; blue, 0 }  ][line width=0.75]    (10.93,-3.29) .. controls (6.95,-1.4) and (3.31,-0.3) .. (0,0) .. controls (3.31,0.3) and (6.95,1.4) .. (10.93,3.29)   ;

\draw (61,99.4) node [anchor=north west][inner sep=0.75pt]  [font=\tiny]  {$\vdots $};
\draw (45,125.4) node [anchor=north west][inner sep=0.75pt]  [font=\tiny,color={rgb, 255:red, 31; green, 47; blue, 255 }  ,opacity=1 ]  {$d_{0}$};
\draw (181,120.4) node [anchor=north west][inner sep=0.75pt]  [font=\tiny,color={rgb, 255:red, 31; green, 47; blue, 255 }  ,opacity=1 ]  {$d_{0} +\alpha -1$};
\draw (16,107.4) node [anchor=north west][inner sep=0.75pt]  [font=\tiny,color={rgb, 255:red, 31; green, 47; blue, 255 }  ,opacity=1 ]  {$\alpha -1$};
\draw (43,95.4) node [anchor=north west][inner sep=0.75pt]  [font=\huge]  {$\{$};
\draw (53,118.4) node [anchor=north west][inner sep=0.75pt]  [font=\tiny,color={rgb, 255:red, 31; green, 47; blue, 255 }  ,opacity=1 ]  {$1$};
\draw (54,97.4) node [anchor=north west][inner sep=0.75pt]  [font=\tiny,color={rgb, 255:red, 31; green, 47; blue, 255 }  ,opacity=1 ]  {$1$};
\draw (98.5,120.4) node [anchor=north west][inner sep=0.75pt]  [font=\tiny]  {$\dotsc $};
\draw (85,107.4) node [anchor=north west][inner sep=0.75pt]  [font=\tiny,color={rgb, 255:red, 31; green, 47; blue, 255 }  ,opacity=1 ]  {$\alpha $};
\draw (93,182.4) node [anchor=north west][inner sep=0.75pt]  [font=\tiny,color={rgb, 255:red, 31; green, 47; blue, 255 }  ,opacity=1 ]  {$( \alpha )$};
\draw (447,119.4) node [anchor=north west][inner sep=0.75pt]  [font=\tiny,color={rgb, 255:red, 31; green, 47; blue, 255 }  ,opacity=1 ]  {$d_{0} +\alpha -1$};
\draw (336,181.4) node [anchor=north west][inner sep=0.75pt]  [font=\tiny,color={rgb, 255:red, 31; green, 47; blue, 255 }  ,opacity=1 ]  {$( \alpha -1,1)$};
\draw (348,21.4) node [anchor=north west][inner sep=0.75pt]  [font=\tiny,color={rgb, 255:red, 31; green, 47; blue, 255 }  ,opacity=1 ]  {$1$};
\draw (383,119.4) node [anchor=north west][inner sep=0.75pt]  [font=\tiny,color={rgb, 255:red, 31; green, 47; blue, 255 }  ,opacity=1 ]  {$d_{0} +\alpha -2$};
\draw (402,54.4) node [anchor=north west][inner sep=0.75pt]  [font=\tiny,color={rgb, 255:red, 31; green, 47; blue, 255 }  ,opacity=1 ]  {$1$};
\draw (439,233.4) node [anchor=north west][inner sep=0.75pt]  [font=\tiny,color={rgb, 255:red, 31; green, 47; blue, 255 }  ,opacity=1 ]  {$2$};
\draw (318,99.4) node [anchor=north west][inner sep=0.75pt]  [font=\tiny]  {$\vdots $};
\draw (298,124.4) node [anchor=north west][inner sep=0.75pt]  [font=\tiny,color={rgb, 255:red, 31; green, 47; blue, 255 }  ,opacity=1 ]  {$d_{0}$};
\draw (269,106.4) node [anchor=north west][inner sep=0.75pt]  [font=\tiny,color={rgb, 255:red, 31; green, 47; blue, 255 }  ,opacity=1 ]  {$\alpha -2$};
\draw (296,94.4) node [anchor=north west][inner sep=0.75pt]  [font=\huge]  {$\{$};
\draw (306,117.4) node [anchor=north west][inner sep=0.75pt]  [font=\tiny,color={rgb, 255:red, 31; green, 47; blue, 255 }  ,opacity=1 ]  {$1$};
\draw (307,96.4) node [anchor=north west][inner sep=0.75pt]  [font=\tiny,color={rgb, 255:red, 31; green, 47; blue, 255 }  ,opacity=1 ]  {$1$};
\draw (351.5,119.4) node [anchor=north west][inner sep=0.75pt]  [font=\tiny]  {$\dotsc $};
\draw (338,106.4) node [anchor=north west][inner sep=0.75pt]  [font=\tiny,color={rgb, 255:red, 31; green, 47; blue, 255 }  ,opacity=1 ]  {$\alpha -1$};

\end{tikzpicture}}
    \caption{The active edge with joins case of transitioning between the ramification indices $(\alpha)$ and $(\alpha-1,1)$.}
    \label{fig:joins+active}
\end{figure}

\begin{figure}
    \centering
     \resizebox{.8\textwidth}{!}{\tikzset{every picture/.style={line width=0.75pt}} 

\begin{tikzpicture}[x=0.75pt,y=0.75pt,yscale=-1,xscale=1]

\draw    (59,131) -- (220,131) ;
\draw    (98,131) -- (63,103) ;
\draw    (98,131) -- (62,123) ;
\draw    (98,131) -- (98,119) ;
\draw    (98,131) -- (114,124) ;
\draw [color={rgb, 255:red, 208; green, 2; blue, 27 }  ,draw opacity=1 ][line width=1.5]    (98,131) -- (92,117) ;
\draw    (60,211) -- (218,211) ;
\draw [color={rgb, 255:red, 208; green, 2; blue, 27 }  ,draw opacity=1 ][line width=1.5]    (99,211) -- (99,195) ;
\draw    (145,154) -- (145,179) ;
\draw [shift={(145,181)}, rotate = 270] [color={rgb, 255:red, 0; green, 0; blue, 0 }  ][line width=0.75]    (10.93,-3.29) .. controls (6.95,-1.4) and (3.31,-0.3) .. (0,0) .. controls (3.31,0.3) and (6.95,1.4) .. (10.93,3.29)   ;
\draw    (311,131) -- (472,131) ;
\draw    (312,211) -- (470,211) ;
\draw [color={rgb, 255:red, 208; green, 2; blue, 27 }  ,draw opacity=1 ][line width=1.5]    (351,211) -- (351,195) ;
\draw    (397,154) -- (397,179) ;
\draw [shift={(397,181)}, rotate = 270] [color={rgb, 255:red, 0; green, 0; blue, 0 }  ][line width=0.75]    (10.93,-3.29) .. controls (6.95,-1.4) and (3.31,-0.3) .. (0,0) .. controls (3.31,0.3) and (6.95,1.4) .. (10.93,3.29)   ;
\draw    (352,47) .. controls (415,45) and (438,117) .. (440,131) ;
\draw    (352,47) -- (311,46) ;
\draw [color={rgb, 255:red, 208; green, 2; blue, 27 }  ,draw opacity=1 ][line width=1.5]    (352,47) -- (352,32) ;
\draw    (442,211) -- (442,229) ;
\draw    (351,130) -- (316,102) ;
\draw    (351,130) -- (315,122) ;
\draw    (351,130) -- (351,118) ;
\draw    (351,130) -- (367,123) ;
\draw [color={rgb, 255:red, 208; green, 2; blue, 27 }  ,draw opacity=1 ][line width=1.5]    (351,130) -- (345,116) ;
\draw    (98,131) -- (220,139) ;
\draw    (351,130) -- (473,138) ;
\draw    (245,164) -- (280,164) ;
\draw [shift={(282,164)}, rotate = 180] [color={rgb, 255:red, 0; green, 0; blue, 0 }  ][line width=0.75]    (10.93,-3.29) .. controls (6.95,-1.4) and (3.31,-0.3) .. (0,0) .. controls (3.31,0.3) and (6.95,1.4) .. (10.93,3.29)   ;
\draw    (282,164) -- (245,164) ;
\draw    (282,164) -- (247,164) ;
\draw [shift={(245,164)}, rotate = 360] [color={rgb, 255:red, 0; green, 0; blue, 0 }  ][line width=0.75]    (10.93,-3.29) .. controls (6.95,-1.4) and (3.31,-0.3) .. (0,0) .. controls (3.31,0.3) and (6.95,1.4) .. (10.93,3.29)   ;

\draw (61,99.4) node [anchor=north west][inner sep=0.75pt]  [font=\tiny]  {$\vdots $};
\draw (45,125.4) node [anchor=north west][inner sep=0.75pt]  [font=\tiny,color={rgb, 255:red, 31; green, 47; blue, 255 }  ,opacity=1 ]  {$d_{0}$};
\draw (181,120.4) node [anchor=north west][inner sep=0.75pt]  [font=\tiny,color={rgb, 255:red, 31; green, 47; blue, 255 }  ,opacity=1 ]  {$d_{0} +\alpha -3$};
\draw (16,107.4) node [anchor=north west][inner sep=0.75pt]  [font=\tiny,color={rgb, 255:red, 31; green, 47; blue, 255 }  ,opacity=1 ]  {$\alpha -2$};
\draw (43,95.4) node [anchor=north west][inner sep=0.75pt]  [font=\huge]  {$\{$};
\draw (53,118.4) node [anchor=north west][inner sep=0.75pt]  [font=\tiny,color={rgb, 255:red, 31; green, 47; blue, 255 }  ,opacity=1 ]  {$1$};
\draw (54,97.4) node [anchor=north west][inner sep=0.75pt]  [font=\tiny,color={rgb, 255:red, 31; green, 47; blue, 255 }  ,opacity=1 ]  {$1$};
\draw (98.5,120.4) node [anchor=north west][inner sep=0.75pt]  [font=\tiny]  {$\dotsc $};
\draw (85,107.4) node [anchor=north west][inner sep=0.75pt]  [font=\tiny,color={rgb, 255:red, 31; green, 47; blue, 255 }  ,opacity=1 ]  {$\alpha $};
\draw (93,182.4) node [anchor=north west][inner sep=0.75pt]  [font=\tiny,color={rgb, 255:red, 31; green, 47; blue, 255 }  ,opacity=1 ]  {$( \alpha )$};
\draw (447,119.4) node [anchor=north west][inner sep=0.75pt]  [font=\tiny,color={rgb, 255:red, 31; green, 47; blue, 255 }  ,opacity=1 ]  {$d_{0} +\alpha -3$};
\draw (336,181.4) node [anchor=north west][inner sep=0.75pt]  [font=\tiny,color={rgb, 255:red, 31; green, 47; blue, 255 }  ,opacity=1 ]  {$( \alpha -1,1)$};
\draw (348,21.4) node [anchor=north west][inner sep=0.75pt]  [font=\tiny,color={rgb, 255:red, 31; green, 47; blue, 255 }  ,opacity=1 ]  {$1$};
\draw (383,119.4) node [anchor=north west][inner sep=0.75pt]  [font=\tiny,color={rgb, 255:red, 31; green, 47; blue, 255 }  ,opacity=1 ]  {$d_{0} +\alpha -4$};
\draw (402,54.4) node [anchor=north west][inner sep=0.75pt]  [font=\tiny,color={rgb, 255:red, 31; green, 47; blue, 255 }  ,opacity=1 ]  {$1$};
\draw (439,233.4) node [anchor=north west][inner sep=0.75pt]  [font=\tiny,color={rgb, 255:red, 31; green, 47; blue, 255 }  ,opacity=1 ]  {$2$};
\draw (318,99.4) node [anchor=north west][inner sep=0.75pt]  [font=\tiny]  {$\vdots $};
\draw (298,124.4) node [anchor=north west][inner sep=0.75pt]  [font=\tiny,color={rgb, 255:red, 31; green, 47; blue, 255 }  ,opacity=1 ]  {$d_{0}$};
\draw (269,106.4) node [anchor=north west][inner sep=0.75pt]  [font=\tiny,color={rgb, 255:red, 31; green, 47; blue, 255 }  ,opacity=1 ]  {$\alpha -3$};
\draw (296,94.4) node [anchor=north west][inner sep=0.75pt]  [font=\huge]  {$\{$};
\draw (306,117.4) node [anchor=north west][inner sep=0.75pt]  [font=\tiny,color={rgb, 255:red, 31; green, 47; blue, 255 }  ,opacity=1 ]  {$1$};
\draw (307,96.4) node [anchor=north west][inner sep=0.75pt]  [font=\tiny,color={rgb, 255:red, 31; green, 47; blue, 255 }  ,opacity=1 ]  {$1$};
\draw (351.5,119.4) node [anchor=north west][inner sep=0.75pt]  [font=\tiny]  {$\dotsc $};
\draw (338,106.4) node [anchor=north west][inner sep=0.75pt]  [font=\tiny,color={rgb, 255:red, 31; green, 47; blue, 255 }  ,opacity=1 ]  {$\alpha -1$};

\end{tikzpicture}}
    \caption{The active edge with joins and one cut case of transitioning between the ramification indices $(\alpha)$ and $(\alpha-1,1)$.}
    \label{fig:cut+joins+active}
\end{figure}

\begin{figure}
    \centering
     \resizebox{.6\textwidth}{!}{\tikzset{every picture/.style={line width=0.75pt}} 

\begin{tikzpicture}[x=0.75pt,y=0.75pt,yscale=-1,xscale=1]

\draw    (57.67,82.33) -- (181,82) ;
\draw    (132.93,131.65) .. controls (94.95,129.4) and (77.15,97.29) .. (75.25,82.27) ;
\draw    (132.93,131.65) -- (175.83,146.92) ;
\draw    (132.93,131.65) -- (175.03,116.92) ;
\draw    (132.93,131.65) -- (174.89,137.92) ;
\draw    (130,82.33) -- (121.5,69.33) ;
\draw    (130,82.33) -- (141.5,71.33) ;
\draw [color={rgb, 255:red, 208; green, 2; blue, 27 }  ,draw opacity=1 ][line width=1.5]    (132.93,131.65) -- (133,117.33) ;
\draw    (60,211) -- (179.67,211.33) ;
\draw [color={rgb, 255:red, 208; green, 2; blue, 27 }  ,draw opacity=1 ][line width=1.5]    (135,211) -- (135,195) ;
\draw    (75,210.33) -- (75,228.33) ;
\draw    (129.67,142) -- (129.67,167) ;
\draw [shift={(129.67,169)}, rotate = 270] [color={rgb, 255:red, 0; green, 0; blue, 0 }  ][line width=0.75]    (10.93,-3.29) .. controls (6.95,-1.4) and (3.31,-0.3) .. (0,0) .. controls (3.31,0.3) and (6.95,1.4) .. (10.93,3.29)   ;
\draw    (251,83.33) -- (374.33,83) ;
\draw    (326.27,132.65) .. controls (288.28,130.4) and (270.49,98.29) .. (268.58,83.27) ;
\draw    (326.27,132.65) -- (369.17,147.92) ;
\draw    (326.27,132.65) -- (368.36,117.92) ;
\draw    (326.27,132.65) -- (368.23,138.92) ;
\draw    (323.33,83.33) -- (314.83,70.33) ;
\draw    (323.33,83.33) -- (334.83,72.33) ;
\draw [color={rgb, 255:red, 208; green, 2; blue, 27 }  ,draw opacity=1 ][line width=1.5]    (326.27,132.65) -- (326.33,118.33) ;
\draw    (253.33,212) -- (373,212.33) ;
\draw [color={rgb, 255:red, 208; green, 2; blue, 27 }  ,draw opacity=1 ][line width=1.5]    (328.33,212) -- (328.33,196) ;
\draw    (268.33,211.33) -- (268.33,229.33) ;
\draw    (319.67,141) -- (319.67,166) ;
\draw [shift={(319.67,168)}, rotate = 270] [color={rgb, 255:red, 0; green, 0; blue, 0 }  ][line width=0.75]    (10.93,-3.29) .. controls (6.95,-1.4) and (3.31,-0.3) .. (0,0) .. controls (3.31,0.3) and (6.95,1.4) .. (10.93,3.29)   ;
\draw    (370.33,108) .. controls (332.35,105.75) and (294.49,97.62) .. (292.58,82.61) ;
\draw [color={rgb, 255:red, 208; green, 2; blue, 27 }  ,draw opacity=1 ][line width=1.5]    (324.27,101.31) -- (324.33,90) ;
\draw    (294.33,212) -- (294.33,230) ;
\draw    (205,143) -- (240,143) ;
\draw [shift={(242,143)}, rotate = 180] [color={rgb, 255:red, 0; green, 0; blue, 0 }  ][line width=0.75]    (10.93,-3.29) .. controls (6.95,-1.4) and (3.31,-0.3) .. (0,0) .. controls (3.31,0.3) and (6.95,1.4) .. (10.93,3.29)   ;
\draw    (242,143) -- (205,143) ;
\draw    (242,143) -- (207,143) ;
\draw [shift={(205,143)}, rotate = 360] [color={rgb, 255:red, 0; green, 0; blue, 0 }  ][line width=0.75]    (10.93,-3.29) .. controls (6.95,-1.4) and (3.31,-0.3) .. (0,0) .. controls (3.31,0.3) and (6.95,1.4) .. (10.93,3.29)   ;

\draw (173.86,143.51) node [anchor=north west][inner sep=0.75pt]  [font=\small,rotate=-180.37]  {$\vdots $};
\draw (59.67,70.73) node [anchor=north west][inner sep=0.75pt]  [font=\tiny,color={rgb, 255:red, 31; green, 47; blue, 255 }  ,opacity=1 ]  {$d_{0}$};
\draw (166.67,71.07) node [anchor=north west][inner sep=0.75pt]  [font=\tiny,color={rgb, 255:red, 31; green, 47; blue, 255 }  ,opacity=1 ]  {$d_{0} -\alpha $};
\draw (128.33,105.73) node [anchor=north west][inner sep=0.75pt]  [font=\tiny,color={rgb, 255:red, 31; green, 47; blue, 255 }  ,opacity=1 ]  {$\alpha $};
\draw (123,68.73) node [anchor=north west][inner sep=0.75pt]  [font=\tiny]  {$\dotsc $};
\draw (72,232.73) node [anchor=north west][inner sep=0.75pt]  [font=\tiny,color={rgb, 255:red, 31; green, 47; blue, 255 }  ,opacity=1 ]  {$2$};
\draw (129,182.4) node [anchor=north west][inner sep=0.75pt]  [font=\tiny,color={rgb, 255:red, 31; green, 47; blue, 255 }  ,opacity=1 ]  {$( \alpha )$};
\draw (367.19,144.51) node [anchor=north west][inner sep=0.75pt]  [font=\small,rotate=-180.37]  {$\vdots $};
\draw (253,71.73) node [anchor=north west][inner sep=0.75pt]  [font=\tiny,color={rgb, 255:red, 31; green, 47; blue, 255 }  ,opacity=1 ]  {$d_{0}$};
\draw (360,72.07) node [anchor=north west][inner sep=0.75pt]  [font=\tiny,color={rgb, 255:red, 31; green, 47; blue, 255 }  ,opacity=1 ]  {$d_{0} -\alpha $};
\draw (319.33,108.07) node [anchor=north west][inner sep=0.75pt]  [font=\tiny,color={rgb, 255:red, 31; green, 47; blue, 255 }  ,opacity=1 ]  {$\alpha -1$};
\draw (316.33,69.73) node [anchor=north west][inner sep=0.75pt]  [font=\tiny]  {$\dotsc $};
\draw (265.33,233.73) node [anchor=north west][inner sep=0.75pt]  [font=\tiny,color={rgb, 255:red, 31; green, 47; blue, 255 }  ,opacity=1 ]  {$2$};
\draw (318.33,182.73) node [anchor=north west][inner sep=0.75pt]  [font=\tiny,color={rgb, 255:red, 31; green, 47; blue, 255 }  ,opacity=1 ]  {$( \alpha -1,1)$};
\draw (91.67,103.07) node [anchor=north west][inner sep=0.75pt]  [font=\tiny,color={rgb, 255:red, 31; green, 47; blue, 255 }  ,opacity=1 ]  {$\alpha $};
\draw (258.33,109.4) node [anchor=north west][inner sep=0.75pt]  [font=\tiny,color={rgb, 255:red, 31; green, 47; blue, 255 }  ,opacity=1 ]  {$\alpha -1$};
\draw (329,87.07) node [anchor=north west][inner sep=0.75pt]  [font=\tiny,color={rgb, 255:red, 31; green, 47; blue, 255 }  ,opacity=1 ]  {$1$};
\draw (291.33,234.4) node [anchor=north west][inner sep=0.75pt]  [font=\tiny,color={rgb, 255:red, 31; green, 47; blue, 255 }  ,opacity=1 ]  {$2$};

\end{tikzpicture}}
    \caption{The all cuts case of transitioning between the ramification indices $(\alpha)$ and $(\alpha-1,1)$.}
    \label{fig:allcuts}
\end{figure}

\subsubsection*{Multiplicity}
We now show that the bijection constructed above preserves multiplicity. In particular, each
pair of covers related by the local modification contributing with ramification profiles
$(\alpha)$ and $(\alpha-1,1)$ have the same local degree with respect to the morphism
$\Fs \times \Ft$. Consequently, the tropical generalized Tevelev degree is unchanged. Recall from equation \ref{eq:locdegforlater} that the local degree is given by a product of four factors: automorphism factor, product of local Hurwitz numbers, determinant of the matrix representing the map $\Fs\times\Ft$, and a product of edge lengths divided by their least common multiple. 

There are no new automorphism factors produced in any of changes discussed in the previous section. Therefore, the automorphism factor for each cover remains equal to $2^g$ coming from the pair of unmarked simple ramification points corresponding to each genus, as in \cite{troptev}. 

Previously, graphs constructed contain 2 types of local Hurwitz numbers: either \\$H_0(\alpha, (2,1^{d-2}), \beta)$, or $H_0((d),(d), 1)$, where the third point on the base is not a branch point, but only one of its inverse images is marked. Both of these types are always equal to 1, as shown in \cite{troptev}. In this section, different types of local Hurwitz numbers have been constructed. 
\begin{enumerate}
    \item All joins or all cuts cases: $H_0((\alpha),(\alpha), 1^\alpha)$ appears where none of the third point's inverse images are marked. This local Hurwitz number is equal $\frac{1}{\alpha}$.
    \item Active edge with joins or cuts cases: the new local Hurwitz number type is \\ $H_0((d),(d_0,(1)^{d-d_0}),(\alpha,(1)^{d-\alpha}))$. To calculate this we need $\sigma_1\sigma_2\sigma_3=id \in S_d$. Since $\sigma_1$ is a full $d$ cycle, it can be mapped to the cycle $(12\dots d)$. $\sigma_3$ is uniquely determined by $(\sigma_1\sigma_2)^{-1}$ when $(\sigma_1\sigma_2)$ is an $\alpha$-cycle. Due to $\sigma_1$ being a full cycle, any valid $\sigma_2$ can map to another valid $\sigma_2$, therefore there is 1 orbit of valid triples and thus the local Hurwitz number is 1.
    \item Active edge with joins and cuts cases: the new local Hurwitz number that appears is $H_0((d-1,1),(d_0,(1)^{d-d_0}),(\alpha,(1)^{d-\alpha}))$. Although $\sigma_1$ is not a full cycle, the presence of a unique fixed point does not affect the rigidity argument. As in the previous case with a full cycle, all transitive triples are conjugate, and the local Hurwitz number equals 1.
\end{enumerate}   
Overall, the product of local Hurwitz numbers only differs from $1$ when the local Hurwitz number type $H_0((\alpha),(\alpha), 1^\alpha)$ occurs, which introduces a factor of $\frac{1}{\alpha}$ to the product. 

The determinant factor is a dilation factor. Recall that $x_i, L_j$  are the lengths of the edges of the graphs $\overline{\Gamma}, \overline{T}$, and $y_k$ the lengths of $5g$ edges of $\Gamma$ then the rows of the matrix express the $y_k$'s as linear functions of $x_i, L_j$. Since all covers can be split into a genus part  and a marked tree part, the matrix used to calculate the dilation factor is block diagonal. It is shown in \cite{troptev} that the block corresponding to the genus part has determinant $2^g$. To calculate the determinant of the marked tree part, recall two facts: all marked points stabilize to the active path, so the rows corresponding to $x_i$'s in the marked tree part contain exactly one non-zero entry; and when writing the lengths $L_i$'s in terms of the lengths $y_j$'s, we observe that there is exactly one length of the cover that contributes to $L_i$ and does not lie on the active path. The authors in \cite{troptev} prove that these matrix entries are always one, we now examine what happens in the new cases. 
\begin{enumerate}
    \item All joins or all cuts cases: as shown in figures \ref{fig:alljoins} and \ref{fig:allcuts}, there exist 2 edge weights not equal to 1 covering the same edge. Let $b$ be the weight of the active edge and $a$ be the weight of the other edge. To obtain the lattice basis, the non-zero entry corresponding to the appropriate $x_i$ is $\frac{\lcm(a,b)}{b}$. Note that this not equal to $1$ when $a$ and $b$ are coprime.
    \item Active edge with joins or cuts cases: there exists a unique edge of weight not equal to 1, therefore, the nonzero entry in the dilation matrix is 1.
    \item Active edge with joins and cuts cases: same as previous case.
\end{enumerate}

The final factor of the local degree is \[\prod_{e\in CE(T)}\frac{\prod_{\phi(e') = e}m_{e'}}{M_e}\]
where $CE(T)$ denotes the set of compact edges of $T$ and for $e$ any compact edge of $T$, $M_e:=\lcm(\{m_{e'}| e'\in \Gamma, \phi(e')=e\})$, that was previously equal to $1$. This is not equal to $1$ only when there are multiple edges with degree not equal to one covering a compact edge. This only occurs in the case of all cuts or all joins. In this case we have lengths $a$ and $b$ as before and the factor becomes $\frac{a\cdot b}{\lcm(a,b)}$. 

In conclusion, we take the product of all four factors. For the cases involving the active edge, this is a product of $\frac{1}{2^g}\cdot2^g=1$. For the cases of all cuts or all joins let $a$ be the ramification of the marked preimage point of highest ramification in the section of the cover focused on: $\frac{1}{2^g}\cdot \frac{1}{a}\cdot \frac{\lcm(a,b)}{b}\cdot 2^g \cdot  \frac{a\cdot b}{\lcm(a,b)}=1$. Therefore, the local modification leaves all vertex multiplicities unchanged away from the
active edge and preserves the determinant contribution along the active edge, and thus the
bijection preserves multiplicities. This completes the proof of Theorem \ref{thm:higherram}.
 
\bibliographystyle{alpha}
\bibliography{lib}

\end{document}